\pgfplotsset{compat=newest}
\newcommand{\R}{\mathbb{R}}
\renewcommand{\a}{\alpha}
\renewcommand{\b}{\beta}
\newcommand{\II}{{I\!I}}
\newcommand{\FDK}{{\delta\!K}}
\newcommand{\weak}{\rightharpoonup}
\newcommand{\weaks}{\stackrel{*}{\rightharpoonup}}
\newcommand{\SetDeformations}{\mathcal{A}} 
\renewcommand{\t}{\widetilde}
\renewcommand{\o}{\overline}
\newcommand{\D}{\nabla}
\renewcommand{\d}{\partial} \newcommand{\intd}{\,\mathrm{d}}
\newcommand{\h}[1]{\widehat{#1}}
\renewcommand{\h}{\widehat}
\newcommand{\Epot}{\mathcal{F}}
\newcommand{\Estored}{\mathcal{E}}
\newcommand{\Efree}{\mathcal{W}}
\newcommand{\costFct}{\mathcal{J}}
\newcommand{\costFctExpl}{\mathcal{J}}
\newcommand{\N}{\mathbb{N}}
\newcommand{\pf}{v}
\newcommand{\AreaHardPhase}{V}
\newcommand{\penaltyPerimeter}{\eta}
\newcommand{\ModicaMortola}[1]{\mathcal{R}^{{#1}}}
\newcommand{\ModicaMortolaInterfaceWidth}{\epsilon}
\newcommand{\DoubleWell}{\Psi}
\newcommand{\displacement}{w}
\newcommand{\quadPoint}{q} 
\newcommand{\LMIso}{\lambda}
\newcommand{\IsoFct}{G}
\newcommand{\SetNodes}{\mathcal{N}_h}
\newcommand{\SetInteriorNodes}{\mathcal{N}_h^{\text{int}}}
\definecolor{optColor}{rgb}{0.7, 0.75, 0.71}
\newtheorem{theorem}{Theorem}[section]
\newtheorem{lemma}[theorem]{Lemma}
\newtheorem{proposition}[theorem]{Proposition}
\newtheorem{corollary}[theorem]{Corollary}
\DeclareMathOperator{\curl}{curl}
\DeclareMathOperator{\Tr}{tr}
\DeclareMathOperator{\cof}{cof}
\DeclareMathOperator{\Id}{Id}
\DeclareMathOperator{\id}{id}
\numberwithin{equation}{section}
\newcommand{\subjclass}[1]{%
  \let\@oldtitle\@title%
  \gdef\@title{\@oldtitle\footnotetext{\emph{Mathematics subject classification.} #1}}%
}
\newcommand{\keywords}[1]{%
  \let\@@oldtitle\@title%
  \gdef\@title{\@@oldtitle\footnotetext{\emph{Key words and phrases.} #1.}}%
}
\begin{document}

\title{On Material Optimisation for Nonlinearly Elastic Plates and Shells}
\author{Peter Hornung\footnote{Fakult\"at Mathematik, TU Dresden, 01062 Dresden (Germany)}
, Martin Rumpf and Stefan Simon\footnote{Institut f\"ur Numerische Simulation, Universit\"at Bonn, 53115 Bonn (Germany)}}

\subjclass{49K15, 49Q10, 74P05, 74S05}
\keywords{elastic plates and shells, isometries, shape optimization, phase-field model}
\date{}

\maketitle

\begin{abstract}
This paper investigates the optimal distribution of hard and soft material on elastic plates.
In the class of isometric deformations stationary points of a Kirchhoff plate functional with incorporated 
material hardness function are investigated and a compliance cost functional is taken into account.
Under symmetry assumptions on the material distribution and the load it is shown that cylindrical solutions are stationary points. 
Furthermore, it is demonstrated that the optimal design of cylindrically deforming, clamped rectangular plates is non trivial, i.e. with a material distribution which is not just depending on one axial direction on the plate.
Analytical results are complemented with numerical optimization results using a suitable finite element discretization and a phase field description of the material phases. Finally, using numerical methods an outlook on the optimal design of non isometrically deforming plates and shells is given.
\end{abstract}

\section{Introduction}\label{sec:intro} 
In nonlinear models of elastic deformations of plates and shells, 
a decomposition of the material into a hard and a soft phase can be taken into account,
and a natural question to ask is how to distribute these different materials in a mechanically optimal way. 
In this paper, we study this shape optimization problem both analytically and numerically.
For the background of shape optimization of bulk material and linearized elasticity as well as the homogenization perspective, 
we refer the reader to the textbooks \cite{HaNe96,Al02}.

The situation for thin plates addressed here differs from that of bulk materials.
We will essentially make use of the fact that plates can only be deformed isometrically,
i.e., preserving local lengths and angles. This isometry constraint arises naturally
in the rigorous derivation of Kirchhoff's plate theory from nonlinear three dimensional
elasticity provided in \cite{FrJaMue02}.

The characteristic global property of isometrically deformed plates is that they are developable surfaces. 
This is shown for smooth deformations in \cite{HaNi59} 
and it remains true for deformations with finite bending energy, cf. \cite{Po73, Ki01, Pa04}.
For further results on $W^{2,2}$ isometries we refer to \cite{Ho11a, Ho11, DaHo15, H-Eberhard}.

Sprekels and Tiba \cite{SpTi99} studied a linear plate or beam model given by the PDE $\Delta \delta^3(x) \Delta u =f$, 
where $\delta$ is the variable plate thickness, $u$ the normal displacement and $f$ the load. 
They took into account a volume or a tracking type cost functional and applied duality methods to solve the resulting optimization problem numerically.
In Arnautu et al. \cite{ArLaSp00}, the numerical approximation of the deformation of clamped plates via a reformulation of a system of second order PDEs is investigated. 
For the discretization piecewise affine, continuous finite elements are taken into account.
Recently, an optimal control problem for plates with variable thickness was studied by Deckelnick et al. \cite{DeHiJo17}.
Also here, the thickness of the plate is the design function.
The authors used a variational discretization of the resulting optimal control problem
and took into account a mixed formulation of the state equation based on a lowest-order Raviart-Thomas mixed finite element approach. 
They derived estimates for the discretization and the regularization error.

Our focus in this paper is on isometric deformations and we take into account the nonlinear Kirchhoff plate functional 
$$
\Efree[B,u] 
 = \frac{1}{2}\int_S B(x) |A(x)|^2 \intd x - \int_S f(x)\cdot u(x) \intd x, 
$$
where $u$ denotes the deformation of a plate $S$, $A$ the second fundamental of the deformed plate, $f$ the load 
and $B:S\mapsto \{a,b\}$ a binary material hardness function, which describes the distribution of a hard phase $b$ and a soft phase $a$ with $b>a$. 
In \cite{HoRuSi16}, we already considered the optimal distribution of a soft and a hard material for nonlinearly elastic planar beams. 
We proved that under gravitational force the optimal distribution involves no microstructure and is ordered.
We also provided numerical simulations which confirm and extend this observation.
Now, we treat the two dimensional case with a particular focus on cylindrical deformation of rectangular plates clamped on one side.
\\
For a homogeneous material distribution, Bartels \cite{Ba11} approximated large bending isometries by making use of the discrete Kirchhoff triangle and a linearization of the isometry constraint.
At variance we implemented the exact isometry constraints on all vertices of the underlying triangulation.
To describe the distribution of hard material we again took into account a phase field model of Modica--Mortola type.
Furthermore, we investigated the optimization of a material distribution on non isometrically deforming elastic plates and shells. 

The paper is organized as follows.
In Section \ref{sec:plate} we discuss the Kirchhoff plate functional with a material hardness function.
We in particular show that under suitable symmetry assumptions 
there exists a stationary point in the class of isometric deformations which is a cylindrical solution.
Using the discrete Kirchhoff triangle scheme we investigate in Section \ref{sec:numericStateEq} 
a finite element discretization of the state equation
and compare numerically the compliance cost for different distributions of the hard material on a rectangular, clamped plate. 
Then, in Section \ref{sec:nontrivial} we compare these designs analytically and show that 
optimal designs for sufficient strong applied forces are indeed not just depending on one axis direction of the plate.
In Section \ref{sec:Developability} a mild condition on the deformed boundary is investigated under
which (reflection symmetric) isometric deformations are
already cylindrical and in Section \ref{sec:Disenyo} existence of an optimal design in the class of cylindrically deforming plates is shown using a relaxation approach.
Properties of such an optimal design and in particular the distribution profile of hard material are analyzed in Section \ref{sec:properties}. 
Section \ref{sec:numericOptDesign} confirms these findings numerically based on 
the above finite element discretization of plates with a material distribution modeled via a phase field approach.
Finally, Section \ref{sec:Mixed} generalizes the model considering elastic, in general non isometric deformations of plates and shells as minimizers of the sum of an elastic membrane and bending energy. 
It is shown that the resulting optimal material distribution is determined by fine scale structure, which 
are an indication for the onset of a microstructure in the limit of this shape 
optimization problem for vanishing regularization parameter.

\section{Stationary points of the Kirchhoff plate functional}
\label{sec:plate}

In the following, we consider a rectangular domain $S = (0, \ell)\times I$ with $I=(-\frac{1}{2}, \frac{1}{2})$
of length $\ell$ and width $1$ as the midplate of the reference configuration of a thin elastic object $S_\delta = S \times (-\delta/2,\delta/2)$.
For a deformation $U:S_\delta \to \R^3$, we consider the stored elastic energy functional
$
 \mathcal{E}_{3D}[U] = \int_{S_\delta} W(\D U) \intd x
$.
In \cite{FrJaMue02b}, under suitable assumptions on the hyperelastic density function $W$ and for a scaling $1/\delta^3$,
a $\Gamma$-convergence result was established, where the limit functional 
$
 \Estored[u] = \frac{1}{2} \int_S |A[u](x)|^2 \intd x
$
is only finite for $u \in W^{2,2}_{\text{iso}}(S)$.
More precisely, we denote by $\Id$ the $2\times 2$ identity matrix and we set
$$
W^{1,\infty}_{\text{iso}}(S) = \{u\in W^{1,\infty}(S, \R^3) : (\D u)^T(\D u) = \Id \mbox{ almost everywhere }\} \, .
$$
Then we define $W^{2,2}_{\text{iso}}(S) = W^{2,2}(S, \R^3)\cap W^{1,\infty}_{\text{iso}}(S)$.
We will always consider plates which are {\em clamped} at the `left' lateral boundary, i.e.,
deformations which belong to the admissible class
$$
\SetDeformations_S = \left\{u\in W^{2,2}_{\text{iso}}(S) : u = \id \mbox{ and }\D u = \Id
\mbox{ in the trace sense on }\{0\}\times (-\frac{1}{2}, \frac{1}{2})\right\}.
$$
Here and elsewhere $\id$ denotes the identity or the standard injection of $\R^2$ into $\R^3$.
Moreover, $A[u]$ denotes the second fundamental form of an immersion $u : S\to\R^3$.
When there is no danger of confusion we will simply write $A$.

Now, we take into account a material distribution $B : S \to [a,b]$ on the reference plate,
which models the material hardness between two positive constants $a < b$.
Then the elastic energy stored in the deformed configuration $u(S)$ is given by Kirchhoff's plate energy, weighted with the material hardness $B$. 
Moreover, an external force $f\in L^2(S, \R^3)$ is acting. 
Thus, the free elastic energy is given by
\begin{align}\label{eq:FreeElastEnergy}
 \Efree[B,u] 
 = \Estored[B,u] - \Epot[u] 
 = \frac{1}{2}\int_S B(x) |A(x)|^2 \intd x - \int_S f(x)\cdot u(x) \intd x \, .
\end{align}
Next, let us review some results about $W^{2,2}$ isometric immersions. 
Due to a result by Kirchheim \cite{Ki01}, we have $W^{2,2}_{\text{iso}}(S)\subset C^1(S, \R^3)$.
Define $C_{\D u}$ as the set of points $x\in S$ such that
$\D u$ is constant in a neighbourhood of $x$. 
By definition $C_{\D u}$ is open. As shown in \cite{Ki01, Pa04}
the deformation $u$ is developable on $S\setminus C_{\D u}$, i.e.,
for every $z\in S\setminus C_{\D u}$ there exists a unique line segment, denoted by $[z]$,
with the properties that $[z]\subset S$, both endpoints of $[z]$ lie on $\d S$,
and $\D u$ is constant on $[z]$.
\\
Being open, the set $C_{\D u}$ consists of countably many connected components. For 
each such component $U$ there exists a countable set $\Sigma\subset S\setminus C_{\D u}$
such that $S\cap\d U = \bigcup_{z\in\Sigma} [z]$, cf. \cite{Ho11a}.

To study stationary points of Kirchhoff's plate energy in the class of isometric immersions
we follow \cite{Ho17, Ho20} and consider for a given $u\in W^{2,2}_{\text{iso}}(S)$ 
a one parameter family $(u_t)_{t\in (-1,1)}\subset W^{2,2}_{\text{iso}}(S)$ such that the limit
$$
\tau = \lim_{t\to 0}\frac{1}{t}\left(u_t - u\right).
$$
exists weakly in $W^{2,2}(S, \R^3)$. In what follows such a family $(u_t)$ will be called
a bending of $u$.
\\
We can compute the first variation of $\Efree[B,\cdot]$ along the family $(u_t)$. It is given by
\begin{equation}
\label{first variation}
\frac{d}{dt}\Big|_{t = 0} \Efree[B,u_t] = \int_S B(x) A[u_t]:b(x)\intd x - \int_S f(x)\cdot\tau(x) \intd x.
\end{equation}
Here $b\in L^2(S, \R^{2\times 2})$ is the weak $L^2$ limit
\begin{equation}
\label{lin2f}
b = \lim_{t\to 0}\frac{1}{t}\left(A[u_t] - A[u]\right).
\end{equation}
Following \cite{Ho17, Ho20} we say that $u$ is stationary for $\Efree[B,\cdot]$ on some subset $\SetDeformations\subset W^{2,2}_{\text{iso}}(S)$
if \eqref{first variation} is zero for all bendings $(u_t)\subset\SetDeformations$.
Applying the direct method one can show that $\Efree[B,\cdot] : \SetDeformations_S\to [0, \infty)$ attains a minimum,
see e.g. \cite{Ho17}.
\bigskip

A deformation $u : S\to\R^3$ will be called {\em cylindrical} if
\begin{align*}
u(x_1, x_2) =
\begin{pmatrix}
u_1(x_1, 0)
\\
x_2
\\
u_3(x_1, 0)
\end{pmatrix}
\end{align*}
for all $(x_1, x_2)\in S$.
A cylindrical deformation $u$ of $S$ is determined by the planar arclength parametrised curve 
$u(\cdot, 0)$.
For isometric, cylindrical $u$ we therefore introduce as in \cite{HoRuSi16} the phase $K\in W^{1,2}(0, \ell)$
of $u' = \d_1 u(\cdot, 0)$ by
\begin{equation}
\label{uK}
u' =
\begin{pmatrix}
\cos K \\ 0 \\ \sin K
\end{pmatrix}.
\end{equation}
Once $u(0)$ is prescribed, $u$ is determined by $K$.
The normal to $u$ is given by $n = (-\sin K, 0, \cos K)$.
\\
Similar to \cite{HoRuSi16} we introduce
$$
F(x_1, x_2) = \int_{x_1}^{\ell} f(s, x_2) ds,
$$
and set $\o F(x_1) = \int_{-1/2}^{1/2} F(x_1, x_2)\intd x_2$ as well as
$\o B(x_1) = \int_{-1/2}^{1/2} B(x_1, x_2)\intd x_2$.
\\
Note that in terms of $K$, the clamped boundary condition on $u\in\SetDeformations_S$ is equivalent to $K(0) = 0$
and $u(0) = 0$. Thus $K$ belongs to the space
$$
W_{l}^{1,2}(0, \ell) = \{K\in W^{1,2}(0,\ell) : K(0) = 0\}.
$$
Define the functional $\Efree_{\text{phase}}[B,\cdot] : W_{l}^{1,2}(0, \ell)\to\R$ by
\begin{equation}
\label{en-4}
\Efree_{\text{phase}}[B,K] = \frac{1}{2}\int_0^{\ell} \o B(x_1)(K'(x_1))^2\ \intd x_1 - \int_0^{\ell} 
\o F(x_1)\cdot u'(x_1)\intd x_1.
\end{equation} 
Stationary points $K$ of \eqref{en-4} satisfy the Euler-Lagrange equation
\begin{equation}
\label{ELK-1}
-(\o BK')' = \o F\cdot n
\mbox{ in the dual space of }
W^{1,2}_l(0, \ell).
\end{equation}
Observe that \eqref{ELK-1} encodes the Dirichlet boundary condition $K(0) = 0$ and the natural boundary
condition $K'(\ell) = 0$.
\\
It is easy to construct minimisers $K$ of \eqref{en-4} within $W^{1,2}_l(0, \ell)$.
It was shown in \cite{HoRuSi16} that, if $\o f= (\cos\b, 0, \sin\b)$
for some constant $\b\in [-\pi/2, 0)$, then \eqref{en-4} admits
a unique global minimiser $K$, which can equivalently be characterized as being
the unique solution of \eqref{ELK-1}
with the additional property that
\begin{equation}
\label{statecon}
K(x_1)\in [\beta, \beta + \pi)\mbox{ for all }x_1\in (0, \ell).
\end{equation}
And this, in turn, is equivalent to the stronger condition
$K\in (\beta, 0]$ on $(0, \ell)$.
\bigskip

Consider a cylindrical deformation $u\in\SetDeformations_S$ which is stationary
within the class of cylindrical deformations, i.e., 
$$
\frac{d}{dt}\Big|_{t = 0} \Efree[B,u_t] = 0
$$
for all bendings $(u_t)_{t\in (-1,1)}\subset\SetDeformations_S$ of $u$ such that each $u_t$ is cylindrical. Its phase is clearly a solution of \eqref{ELK-1}.

The following proposition asserts that any cylindrical $u\in\SetDeformations_S$ whose phase $K$ satisfies
\eqref{ELK-1} is in fact stationary among all maps in $\SetDeformations_S$.
This is an instance of the principle of symmetric stationarity, see e.g. 
\cite{Ho17, Ho20, Ho14, DaDeGr08}.
The following arguments follow the conceptual framework developed in \cite{Ho17},
but in the present case the computations can be carried out explicitly.

\begin{proposition}\label{procyl}
Let $f\in L^2(S, \R^3)$ be such that $e_2\cdot f\equiv 0$.
Assume, moreover, that 
\begin{equation}
\label{moments}
\begin{split}
\int_{-1/2}^{1/2} x_2 B(x_1, x_2) \intd x_2 &= 0\mbox{ for almost every }x_1\in (0, \ell),
\\
\int_{-1/2}^{1/2} x_2 f(x_1, x_2) \intd x_2 &= 0\mbox{ for almost every }x_1\in (0, \ell).
\end{split}
\end{equation}
Let $u\in\SetDeformations_S$ be cylindrical and assume that its phase $K$ satisfies 
\eqref{ELK-1} and that $K'\neq 0$ almost everywhere on $(0, \ell)$.
Then $u$ is stationary for $\Efree[B,\cdot]$.
\end{proposition}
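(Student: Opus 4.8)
The plan is to reduce the full first variation \eqref{first variation} to the one-dimensional first variation of $\Efree_{\text{phase}}[B,\cdot]$ at $K$, which vanishes because $K$ solves \eqref{ELK-1}. For a cylindrical $u$ the moving frame $\d_1 u = (\cos K, 0, \sin K)$, $\d_2 u = e_2$, $n = (-\sin K, 0, \cos K)$ depends only on $x_1$, and a short computation gives $A[u] = \mathrm{diag}(K', 0)$. Hence, for any bending $(u_t)$ with variation $\tau$ and linearised second fundamental form $b$ from \eqref{lin2f}, one has $A : b = K'\, b_{11}$; moreover $\d_1^2 u = K' n$ is orthogonal to $\dot n$, so that $b_{11} = \d_1^2\tau\cdot n$. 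Thus \eqref{first variation} becomes
\begin{equation*}
\frac{d}{dt}\Big|_{t=0}\Efree[B,u_t] = \int_S B\, K'\,(\d_1^2\tau\cdot n)\intd x - \int_S f\cdot\tau\intd x ,
\end{equation*}
and everything reduces to understanding the structure of $\tau$.

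First I would establish the key structural fact that every bending of a cylindrical $u$ with $K'\neq 0$ is \emph{affine in $x_2$}. Writing the infinitesimal isometry constraint $(\D u)^T\D\tau + (\D\tau)^T\D u = 0$ in the orthonormal frame $\{\d_1 u, \d_2 u, n\}$ yields $\d_1\tau = c\,\d_2 u + d_1 n$ and $\d_2\tau = -c\,\d_1 u + d_2 n$ for scalar fields $c, d_1, d_2$. Imposing the compatibility $\d_2\d_1\tau = \d_1\d_2\tau$ and using $\d_1^2 u = K' n$, $\d_1 n = -K'\d_1 u$ together with the $x_2$-independence of the frame, the $e_2$-component forces $\d_2 c = 0$, while the $\d_1 u$-component gives $\d_1 c + d_2 K' = 0$; here the hypothesis $K'\neq 0$ almost everywhere is exactly what lets me conclude that $d_2 = -c'/K'$ depends only on $x_1$. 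Consequently $\d_2\tau = -c\,\d_1 u + d_2 n$ is independent of $x_2$, so $\tau(x_1,x_2) = \tau_0(x_1) + x_2\, v(x_1)$ with $\tau_0(x_1) = \tau(x_1,0)$ and $v, \tau_0\in W^{2,2}(0,\ell)$. I expect this step to be the main obstacle, since it is the only place where the ruling geometry and the nondegeneracy $K'\neq 0$ enter.

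With $\tau$ affine in $x_2$ the reduction is routine. Since $\d_1^2\tau = \d_1^2\tau_0 + x_2\,\d_1^2 v$, the bending term equals $\int_0^\ell K'\, n\cdot\big[\o B\,\d_1^2\tau_0 + (\int_{-1/2}^{1/2} x_2 B\intd x_2)\,\d_1^2 v\big]\intd x_1$, and the first moment condition on $B$ in \eqref{moments} kills the second summand, leaving $\int_0^\ell \o B\,K'\,(\d_1^2\tau_0\cdot n)\intd x_1$; likewise the first moment condition on $f$ reduces the force term to $\int_0^\ell(\int_{-1/2}^{1/2} f\intd x_2)\cdot\tau_0\intd x_1$. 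Because $e_2\cdot f\equiv 0$ and $e_2\cdot n = 0$, only the projection $P\tau_0$ of $\tau_0$ onto the $(e_1,e_3)$-plane contributes, and $\d_1(P\tau_0) = (n\cdot\d_1\tau_0)\,n$ shows that $P\tau_0$ is an in-plane bending of the curve $u(\cdot,0)$ with phase variation $\kappa := n\cdot\d_1\tau_0 = d_1(\cdot,0)$.

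Finally I would recognise the resulting expression as exactly the first variation of $\Efree_{\text{phase}}[B,\cdot]$ at $K$ in the direction $\kappa$: using $\d_1^2(P\tau_0)\cdot n = \kappa'$ and $\d_1\o F = -\int_{-1/2}^{1/2} f\intd x_2$, an integration by parts in the force term (with $\o F(\ell)=0$ and $\tau_0(0)=0$) gives $\frac{d}{dt}\big|_{t=0}\Efree[B,u_t] = \int_0^\ell\o B\,K'\kappa'\intd x_1 - \int_0^\ell\o F\cdot(\kappa n)\intd x_1$. The clamped conditions $\tau = 0$ and $\d_1\tau = 0$ on $\{0\}\times I$ give $\kappa(0)=0$, so $\kappa\in W^{1,2}_l(0,\ell)$, and since $K$ satisfies the Euler--Lagrange equation \eqref{ELK-1} this variation vanishes. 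As the bending $(u_t)\subset\SetDeformations_S$ was arbitrary, $u$ is stationary for $\Efree[B,\cdot]$.
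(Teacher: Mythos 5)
Your proof is correct, and it reaches the conclusion by a genuinely different route than the paper. The paper's structural input (Lemma \ref{lecyl}) lives at the level of the linearised second fundamental form $b$: from $\curl b=0$ it writes $b=\D^2 m$, and the linearised Gauss equation $\cof A:b=0$ together with $A_{11}=K'\neq 0$ forces $\d_2\d_2 m=0$, giving the affine-in-$x_2$ form \eqref{lecyl-1}; it must then pass back from $b$ to the variation field $\tau$ via the bending-field representation $\d_i\tau=\t\Phi\times\d_i u$ of Lemma~3.8 in \cite{Ho17}, constructing an explicit potential $\Phi$ and identifying $\Phi=\t\Phi$ through the clamping conditions, which is what makes the force term $\int_S f\cdot\tau$ computable. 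You instead prove the structural statement one level down, for $\tau$ itself: the linearised isometry constraint written in the frame $\{\d_1 u,\, e_2,\, n\}$ plus equality of mixed partials, with $K'\neq 0$ used to solve $d_2=-c'/K'$, yields $\tau=\tau_0(x_1)+x_2 v(x_1)$; then both terms of the first variation collapse to one-dimensional integrals by \eqref{moments}, and the result is the weak form of \eqref{ELK-1} tested with $\kappa=n\cdot\d_1\tau_0$. (Your $\kappa$ is in fact the paper's test function $\eta=\alpha'-\alpha'(0)$, since $\alpha''=b_{11}|_{x_2=0}=\kappa'$ and both vanish at $0$; and the hypothesis $K'\neq 0$ enters both arguments at the analogous point.) What your route buys is self-containedness: no curl/cofactor identities and no appeal to the external bending-field lemma. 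What it costs is regularity care that should not be glossed over: the frame coefficients $c, d_1, d_2$ are obtained as products of $W^{1,2}(S)$ fields with the $W^{1,2}\cap L^\infty$ frame, hence lie only in $W^{1,q}$ for $q<2$, so the Schwarz and product-rule manipulations need a word of justification; likewise the two identities you use as if $u_t$ were smooth in $t$ --- the linearised constraint $(\D u)^T\D\tau+(\D\tau)^T\D u=0$ and $b_{11}=\d_1^2\tau\cdot n$ --- must be derived from the weak $W^{2,2}$ convergence of $t^{-1}(u_t-u)$ together with the strong convergence $u_t\to u$ (for the second one, note $n\cdot(n_t-n)=-\tfrac12|n_t-n|^2$, which kills the normal-variation contribution in the limit). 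All of these are routine and do not constitute gaps; the paper's choice to work with $b\in L^2$ and $m\in W^{2,2}$ simply sidesteps them.
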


The hypothesis that $K'\neq 0$ almost everywhere is satisfied in the situations
that we are mainly interested in:
\begin{lemma}\label{K'neq0}
Let $f_0\in \R^3\setminus\{0\}$, let $f\in L^2(S, \R^3)$
and assume that $\o f$ is almost everywhere parallel $f_0$.
Let $K\in W^{1,2}_{l}(0, \ell)$ be a solution 
of \eqref{ELK-1}. If $K'$ has infinitely many zeros in $[0, \ell]$ then
$K$ is identically zero.
\end{lemma}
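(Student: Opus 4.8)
The plan is to reduce \eqref{ELK-1} to a first order problem for the weighted curvature $\o B K'$ and then to invoke uniqueness for the resulting ordinary differential equation. First I would record the structure of the right hand side. Writing $\o f(x_1)=\int_{-1/2}^{1/2} f(x_1,x_2)\intd x_2$ and using Fubini, the hypothesis that $\o f$ is almost everywhere parallel to $f_0$ gives $\o f=g\,f_0$ for a scalar $g\in L^2(0,\ell)$, whence
\begin{equation*}
\o F(x_1)=\Big(\int_{x_1}^{\ell} g(s)\intd s\Big)f_0=:h(x_1)\,f_0,\qquad h\in W^{1,2}(0,\ell),\ h(\ell)=0.
\end{equation*}
Since $n=(-\sin K,0,\cos K)$, the forcing term in \eqref{ELK-1} becomes $\o F\cdot n=h\,(f_0\cdot n)$.

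Next I would pass to the first order formulation. Testing \eqref{ELK-1} against functions in $W^{1,2}_l(0,\ell)$ shows that $v:=\o B K'$ has weak derivative $v'=-\o F\cdot n=-h\,(f_0\cdot n)$, which is continuous because both $h$ and $f_0\cdot n$ are (recall $K\in C^0$); together with the natural boundary condition this gives $v\in C^1([0,\ell])$ with $v(\ell)=0$. Because $\o B\in[a,b]$ with $a>0$, the zeros of $K'$ coincide with those of the continuous function $v$, and by compactness of $[0,\ell]$ the assumed infinitude of zeros forces an accumulation point $x_*\in[0,\ell]$. Since $v\in C^1$ and $v(x_n)=v(x_*)=0$ along a sequence $x_n\to x_*$, the difference quotients yield $v(x_*)=0$ and $v'(x_*)=0$, hence $h(x_*)\,(f_0\cdot n(x_*))=0$.

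The decisive step is to deduce that the applied force is tangential at $x_*$, i.e.\ $f_0\cdot n(x_*)=0$; this follows at once as soon as $h(x_*)\neq0$. Granting it, I would set $q:=f_0\cdot n$ and $p:=f_0\cdot u'$, and use $u'=(\cos K,0,\sin K)$ to compute $q'=-p\,K'=-p\,v/\o B$ alongside $v'=-hq$. On $[0,\ell]$ this gives $|v'|\le\|h\|_{\infty}\,|q|$ and $|q'|\le\tfrac{|f_0|}{a}\,|v|$, with $v$ and $q$ both vanishing at $x_*$; a Gronwall estimate for $|v|+|q|$, integrated from $x_*$ in both directions, yields $v\equiv0$ on all of $[0,\ell]$. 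Equivalently, $(K,\o B K')$ solves a Carathéodory system that is Lipschitz in the state variables, for which $(K\equiv K(x_*),\,v\equiv0)$ is the unique solution through $(x_*,(K(x_*),0))$ precisely once $f_0\cdot n(x_*)=0$. Then $v\equiv0$ forces $K'\equiv0$, so $K$ is constant, and the clamping $K(0)=0$ gives $K\equiv0$.

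The step I expect to be the main obstacle is exactly $f_0\cdot n(x_*)=0$ in the case $h(x_*)=0$, where the identity $h(x_*)(f_0\cdot n(x_*))=0$ carries no information and the forcing can sustain genuine oscillations of $K'$. I would resolve this through the sign structure of the averaged load: when $\o f$ keeps a fixed orientation along $f_0$ (as in the gravitational and constant load situations motivating the statement), $h$ is monotone and hence sign definite, so it can vanish only on a terminal interval $[x^0,\ell]$, on which $v\equiv0$ is immediate from $v'=-hq$ and $v(\ell)=0$, while any accumulation point in $[0,x^0)$ has $h\neq0$ and the argument above applies. The degenerate subcase $f_0\parallel e_2$, in which $f_0\cdot n\equiv0$ and therefore $v\equiv0$ outright, is trivial. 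Thus monotonicity of the load is the property that makes the clean ordinary differential equation uniqueness argument go through.
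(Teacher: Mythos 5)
Your core argument is exactly the paper's proof: there as well one sets $k=\o B K'$, observes that $k\in C^1([0,\ell])$ because $k'=-\o F\cdot n$ is continuous, deduces $k(t_0)=0$ and $k'(t_0)=0$ at an accumulation point $t_0$ of the zeros (your $x_*$), and concludes by uniqueness for the first-order system $K'=k/\o B$, $-k'=\o F\cdot n$ through the data $(K(t_0),0)$, for which the constant map is a solution; clamping then gives $K\equiv 0$. Your Gronwall estimate for the pair $(v,q)$ is just an explicit implementation of that uniqueness step, so in the nondegenerate case the two proofs coincide.

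Where you diverge is the case $h(x_*)=0$, and your instinct there is correct: this is a genuine gap, and in fact it is a gap in the paper's own proof. The paper disposes of it with the sentence ``since $\o F$ is always parallel to $\o F(t_0)$, we conclude that $\o F\cdot n(t_0)$ is identically zero'', an inference which is valid only when $\o F(t_0)\neq 0$, i.e.\ precisely when $h(t_0)\neq 0$. Your proposed repair, however, does not close the gap either: it imports a sign-definiteness hypothesis absent from the statement, and even granting it, it says nothing when all accumulation points of $\{K'=0\}$ lie in the terminal interval $\{h=0\}$ --- there you only obtain $v\equiv 0$ on that interval, which does not force $K\equiv 0$ on $[0,\ell]$. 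Moreover that case cannot be repaired, because the conclusion genuinely fails there: take $f=-\chi_{(0,\ell/2)}(x_1)\,e_3$, so that $\o F=-(\ell/2-x_1)^+\,e_3$. Any solution of \eqref{ELK-1} then satisfies $k'=-\o F\cdot n=0$ on $(\ell/2,\ell)$ and $k(\ell)=0$, hence $K'\equiv 0$ on $[\ell/2,\ell]$, giving a continuum of zeros; yet $K\equiv 0$ is impossible, since then $n\equiv e_3$ and the equation would force $\o F\cdot e_3=-(\ell/2-x_1)^+\equiv 0$. So the lemma implicitly requires a supplementary hypothesis such as $\o F\neq 0$ on $[0,\ell)$ (satisfied by the loads used later in the paper, e.g.\ $f=-e_3$). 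Under that hypothesis the only accumulation point with $h(t_0)=0$ is $t_0=\ell$, and it is handled by a Rolle argument your write-up is missing: between consecutive zeros of $k$ there is a zero of $k'=-h\,f_0\cdot n$, hence of $f_0\cdot n$; letting these tend to $\ell$ gives $f_0\cdot n(\ell)=0$ by continuity, and your uniqueness argument then applies verbatim. With that amendment your proof (and the paper's) is complete.
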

\begin{proof}
If the set $\{\o BK' = 0\} = \{K' = 0\}$ is not finite, then it
has an accumulation point $t_0\in [0, \ell]$. Since $\o BK' = k\in C^1([0,\ell])$
due to \eqref{ELK-1}, we see that $k(t_0) = \o F(t_0)\cdot n(t_0) = 0$.
\\
Since by the hypotheses on $f$ we know that $\o F$ is always parallel
to $\o F(t_0)$, we conclude that $\o F\cdot n(t_0)$ is identically zero.
Hence the constant map $(K(t_0), 0)$ is a solution of the first order system
\begin{align*}
K' &= \frac{k}{\o B}
\\
- k' &= \o F\cdot n
\end{align*}
associated with \eqref{ELK-1}. Hence by uniqueness we must have
$K \equiv K(t_0)$. Since $K\in W^{1,2}_l$ we conclude that $K = 0$.
\end{proof}
The proof of Proposition \ref{procyl} uses the following lemma.

\begin{lemma}\label{lecyl}
Let $u\in W^{2,2}_{\text{iso}}(S)$ be cylindrical
and let $(u_t)\subset W^{2,2}_{\text{iso}}(S)$ be a bending of $u$ and denote by $b$ the weak
$L^2$ limit of
$\frac{A[u_t] - A[u]}{t}$ as $t\to 0$.
Let $J\subset (0,\ell)$ be an open interval such that $A[u]\neq 0$ almost everywhere on $J\times (-1/2, 1/2)$.
Then there exist $\a$, $\b\in W^{2,2}(J)$ such that
\begin{equation}
\label{lecyl-1}
b(x) =
\begin{pmatrix}
\a''(x_1) + x_2\b''(x_1) & \b'(x_1)
\\
\b'(x_1) & 0
\end{pmatrix}
\mbox{ for almost every }x\in J\times (-1/2, 1/2).
\end{equation}
If, moreover, $\d_1 u_t = 0$ on $\{0\}\times (-1/2, 1/2)$ in the trace sense
for all $t$, then $\beta'(0) = 0$.
\end{lemma}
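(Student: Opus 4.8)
The plan is to transfer the problem from the weak limit $b$ to the infinitesimal bending $\tau=\lim_{t\to0}\frac1t(u_t-u)$ and to read off the structure of $b$ from the moving frame of the cylindrical immersion $u$. On $S$ write $\d_1u=(\cos K,0,\sin K)$, $\d_2u=e_2=(0,1,0)$ and $n=(-\sin K,0,\cos K)$, an orthonormal frame depending only on $x_1$ and satisfying $\d_1\d_1u=K'n$, $\d_1n=-K'\d_1u$ and $\d_2\d_1u=\d_2n=0$; in particular $A[u]=\mathrm{diag}(K',0)$, so on $J$ the hypothesis $A[u]\neq0$ a.e.\ means precisely $K'\neq0$ a.e. The first step is to show $b_{ij}=n\cdot\d_i\d_j\tau$ in $L^2(J\times I)$. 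Subtracting $A[u]_{ij}=n\cdot\d_i\d_ju$ from $A[u_t]_{ij}=n_t\cdot\d_i\d_ju_t$ and dividing by $t$ splits the difference quotient into $n_t\cdot\tfrac1t(\d_i\d_ju_t-\d_i\d_ju)$ and $\tfrac1t(n_t-n)\cdot\d_i\d_ju$. Since the Gauss map is Lipschitz in $\D u_t$ and $\D u_t\to\D u$ strongly in every $L^q$, one has $n_t\to n$ strongly, so the first term converges weakly in $L^2$ to $n\cdot\d_i\d_j\tau$; the second term is nonzero only for $ij=11$, where it equals $K'\,\tfrac1t(n_t-n)\cdot n=-\tfrac t2K'\big|\tfrac1t(n_t-n)\big|^2\to0$ in $L^1$. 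Hence $b_{ij}=n\cdot\d_i\d_j\tau$.

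Next I would use the linearized isometry constraint $(\D u)^T\D\tau+(\D\tau)^T\D u=0$, i.e.\ $\d_1u\cdot\d_1\tau=0$, $e_2\cdot\d_2\tau=0$ and $\d_1u\cdot\d_2\tau+e_2\cdot\d_1\tau=0$. Setting $p=e_2\cdot\d_1\tau$, $r=n\cdot\d_1\tau$ and $s=n\cdot\d_2\tau$, these identities give $\d_1\tau=p\,e_2+r\,n$ and $\d_2\tau=-p\,\d_1u+s\,n$. Differentiating in the frame and using $\d_1u\cdot\d_1\tau=0$ then yields
\[
b_{11}=\d_1r,\qquad b_{12}=\d_2r=\d_1s-pK',\qquad b_{22}=\d_2s.
\]
Equality of the mixed derivatives $\d_2\d_1\tau=\d_1\d_2\tau$, together with $e_2\cdot\d_2\tau=0$, moreover gives $\d_2p=e_2\cdot\d_1\d_2\tau=\d_1(e_2\cdot\d_2\tau)=0$, so $p=p(x_1)$.

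To finish the structure I need $b_{22}=0$. Each $u_t$ is a $W^{2,2}$ isometry, hence developable, so $\det A[u_t]=0$ a.e. Writing $A[u_t]=A[u]+t\,c_t$ with $c_t\rightharpoonup b$ in $L^2$ gives $0=\cof A[u]:c_t+t\det c_t$; since $\cof A[u]=\mathrm{diag}(0,K')$, $c_t$ is bounded in $L^2$ and $t\det c_t\to0$ in $L^1$, testing against $\varphi\in C_c^\infty(J\times I)$ and passing to the limit yields $K'b_{22}=0$, hence $b_{22}=0$ a.e.\ on $J\times I$. Then $\d_2s=0$, so $s=s(x_1)$ and $b_{12}=\d_1s-pK'$ depends only on $x_1$. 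From $\d_2r=b_{12}(x_1)$ we get $r(x_1,x_2)=r_0(x_1)+x_2\,b_{12}(x_1)$, and since $b_{11}=\d_1r\in L^2$, testing against $\varphi(x_1)$ and $x_2\varphi(x_1)$ shows $r_0,b_{12}\in W^{1,2}(J)$ with $r_0'=\int_Ib_{11}\intd x_2$ and $b_{12}'=12\int_Ix_2b_{11}\intd x_2$. Taking antiderivatives $\a,\b\in W^{2,2}(J)$ with $\a'=r_0$, $\b'=b_{12}$ produces exactly \eqref{lecyl-1}. For the last claim, the stated boundary condition forces $\d_1\tau=0$ on $\{0\}\times I$, so $r=n\cdot\d_1\tau=0$ there; since $\a',\b'\in W^{1,2}(J)\subset C(\overline{J})$, evaluating $r=\a'+x_2\b'$ at $x_1=0$ gives $\a'(0)+x_2\b'(0)=0$ for a.e.\ $x_2$, whence $\b'(0)=0$.

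The main obstacle is the rigorous passage to the limit in the two bilinear/quadratic expressions — the product $n_t\cdot\d_i\d_ju_t$ defining $A[u_t]$ and the developability constraint $\det A[u_t]=0$ — where only weak $L^2$ convergence of the difference quotients is at hand. Both are resolved by pairing the strong $L^q$ convergence $n_t\to n$ (respectively the $L^2$-boundedness of $c_t$) with the weak convergence of the difference quotients, after which the remaining manipulations are the explicit frame computations above.
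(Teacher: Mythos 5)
Your proof is correct, but it takes a genuinely different route from the paper's. The paper is potential-theoretic: from $\curl A[u_t]=0$ it deduces $\curl b=0$, which together with the symmetry of $b$ yields a potential $m\in W^{2,2}(S)$ with $b=\D^2 m$; the linearized developability identity $\cof A[u]:b=0$ (the same one you derive from $\det A[u_t]=0$) plus $A_{11}=K'\neq0$ a.e.\ then gives $\d_2\d_2 m=0$, so $m=\a(x_1)+x_2\b(x_1)$ with $\a$, $\b$ recovered as the zeroth and rescaled first $x_2$-moments of $m$, and \eqref{lecyl-1} is immediate. You instead tie $b$ to the infinitesimal bending $\tau$ from the start, proving $b_{ij}=n\cdot\d_i\d_j\tau$, expanding $\d_1\tau$, $\d_2\tau$ in the orthonormal frame $(\d_1 u,e_2,n)$ via the linearized isometry constraint, and reducing everything to the scalars $p$, $r$, $s$; developability kills $b_{22}$, whence $s$ and $b_{12}$ depend only on $x_1$, and the structure together with the $W^{2,2}$ regularity of the antiderivatives follows by integrating $\d_2 r=b_{12}$ and testing $b_{11}=\d_1 r$ against $\varphi(x_1)$ and $x_2\varphi(x_1)$. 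This is essentially the bending-field technique that the paper only deploys later, in the proof of Proposition \ref{procyl}. What each approach buys: the Hessian-potential argument makes the structural claim very short, but for the boundary assertion the paper must first establish $\d_1\d_2\tau=\b'n\in W^{1,2}$ (via $\d_1\d_2 u_t=(\d_1\d_2 u_t\cdot n_t)n_t$) and take tangential traces, whereas in your setup $\b'(0)=0$ falls out directly from the vanishing trace of $\d_1\tau$ and the continuity of $\a'$, $\b'$ up to $x_1=0$. The extra analytic cost of your route is the justification of the bilinear limits, namely $n_t\cdot t^{-1}\d_i\d_j(u_t-u)$ and $K'\,t^{-1}(n_t-n)\cdot n=-\tfrac{t}{2}K'\,|t^{-1}(n_t-n)|^2$, which you resolve correctly by pairing the strong convergence of $n_t$ and the $L^q$-boundedness of $t^{-1}(n_t-n)$ with the weak $L^2$ convergence of the difference quotients; and, like the paper (which assumes w.l.o.g.\ $J=(0,\ell)$), your boundary argument implicitly requires $\inf J=0$, which is the situation in which the lemma is applied.
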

\begin{proof}
We assume without loss of generality that $J = (0, \ell)$ 
and take into account  $\curl A := (\d_1 A_{21}-\d_2 A_{11},\d_1 A_{22}-\d_2 A_{12})$ 
for a field $A$ of $2\times2$ matrices. Then, we deduce from the fact that $\curl A[u_t] = 0$ for isometries 
that $\curl b = 0$ and from this and the fact that $b$ is symmetric 
we obtain that there exists $m\in W^{2,2}(S)$ such that
$b = \D^2 m$ almost everywhere on $S$. Define
\begin{align*}
\a(x_1) &= \int_{-1/2}^{1/2} m(x_1, x_2)\ dx_2
\\
\b(x_1) &= 12\int_{-1/2}^{1/2} x_2 m(x_1, x_2)\ dx_2.
\end{align*}
Since $m\in W^{2,2}(S)$, we see that $\a$, $\b\in W^{2,2}(0, \ell)$.
On the other hand, the Gauss curvature $\det A[u_t]$ of the immersion  $u_t$ vanishes for all $t$ and therefore a differentiation with respect to $t$ implies $\cof A : b = 0$ everywhere.
As $u$ is cylindrical, this implies that
$A_{11}\d_2\d_2 m = 0$
almost everywhere on $S$. Hence $\d_2\d_2 m = 0$ almost everywhere on $S$.
This implies that 
$$
m(x) = \a(x_1) + x_2\b(x_1) \mbox{ for almost every }x\in S.
$$
Hence $\D^2 m$ equals the right-hand side of \eqref{lecyl-1}.
\\
To prove the last assertion in the statement of the lemma,
denote by $\tau$ the weak $W^{2,2}$ limit of $t^{-1}(u_t - u)$.
Since $u$ is cylindrical,
$$
\frac{1}{t}\d_1\d_2(u_t - u) = \frac{1}{t} A_{12}[u_t] n_t
= \frac{1}{t}(A_{12}[u_t] - A_{12}[u]) n_t \weak b_{12} n
$$
because $n_t\to n$ strongly in $L^2$. Here, we have used that for isometric deformations
$\d_1\d_2 u = (\d_1\d_2 u \cdot n) n$.
The left-hand side converges weakly in $L^2$
to $\d_1\d_2\tau$. Hence $\d_2\d_1\tau = \d_1\d_2\tau = \beta'n\in W^{1,2}$ is independent
of $x_2$.
\\
Finally, $\d_1 u_t = 0$ on $\{0\}\times (-1/2, 1/2)$ implies that $\d_1\tau = 0$ 
and thus also $\d_2\d_1\tau = 0$ on $\{0\}\times (-1/2, 1/2)$ in the trace sense.
From this it follows that $\beta'(0) = 0$.
\end{proof}

\begin{proof}[Proof of Proposition \ref{procyl}]
Let $(u_t)\subset\SetDeformations_S$ be a bending of $u$.
(Notice that the $u_t$ are in general not cylindrical.)
As above, denote by $\tau$ the weak $W^{2,2}$ limit of $t^{-1}(u_t - u)$ and by $b$ 
the weak $L^2$ limit of $\frac{1}{t}(A[u_t] - A[u])$ as $t\to 0$. 
\\
The map $b$ satisfies the hypotheses of Lemma \ref{lecyl}. 
Let $\a$ and $\b$ be as in the conclusion of that lemma.
Define $\eta : (0, \ell)\to\R$ by setting $\eta(x_1) = \a'(x_1) - \a'(0)$. Define $\varphi : (0, \ell)\to\R^3$
by setting
$$
\varphi(x_1) = \int_0^{x_1} \beta'(s) u'(s)\ ds - \eta(x_1) e_2.
$$
Here and in what follows we write $u'(x_1) = \int_I (\d_1 u)(x_1, x_2)\ dx_2$
for cylindrical deformations $u$.
\\
Define $\Phi : S\to\R^3$ by setting
\begin{equation}
\label{equi-2}
\Phi(x) = \varphi(x_1) - x_2\beta'(x_1) e_2\mbox{ for all }x\in S.
\end{equation}
Using \eqref{lecyl-1} we see that
\begin{equation}
\label{equi-1}
\begin{split}
\d_1\Phi &= b_{12}u' - b_{11}e_2
\\
\d_2\Phi &= - b_{12}e_2.
\end{split}
\end{equation}
Since $\d_2 u = e_2$, this implies that
$$
\d_{i}\Phi\times\d_{j} u = b_{ij}\ n
\mbox{ for }i, j = 1, 2.
$$
The isometry property of $u_t$ implies that $\partial_i \tau$ is orthogonal 
to $\partial_i u$.
More precisely, according to \cite[Lemma 3.8]{Ho17} (see also \cite{Ve63, LoMa98})
there exists $\t\Phi\in W^{1,1}(S, \R^3)$ (the so-called bending field of $\tau$)
such that
\begin{align}
\label{tildeOmega}
\d_i\tau = \t\Phi\times\d_i u
\end{align}
and $\d_{i}\t\Phi\times\d_{j} u = b_{ij}\ n$ for $i$, $j = 1, 2$ . Hence
$$
\d_{i}\t\Phi\times\d_{j} u = \d_{i}\Phi\times\d_{j} u\mbox{ for }i, j = 1, 2.
$$
Since $u$ is an immersion, this readily implies that $\D\t\Phi = \D\Phi$
almost everywhere on $S$.
On the other hand, $(u_t, \D u_t) = (u, \D u)$ on $\{x_1 = 0\}$ for all $t$,
due to the clamped boundary condition on the left boundary.
Hence $\D\tau = 0$ on $\{0\}\times (-1/2, 1/2)$
in the trace sense. 
Hence $\t\Phi = 0$ on $\{0\}\times (-1/2, 1/2)$, due to \eqref{tildeOmega}. 
\\
But by definition we observe that $\Phi = 0$ on $\{0\}\times (-1/2, 1/2)$.
Notice that Lemma~\ref{lecyl} implies $\beta'(0) = 0$.
Therefore, we have that $\t\Phi = \Phi$. 
In particular, \eqref{tildeOmega} implies that
\begin{equation}
\label{equi-5}
\d_1\tau = \Phi\times\d_1 u.
\end{equation}
Hence, by integration by parts,
\begin{equation}
\label{equi-4}
\int_S f\cdot\tau = \int_S F\cdot\d_1\tau
= \int_0^{\ell} \o F\cdot (\varphi \times u')
\end{equation}
because $\beta'$ and $u'$ are independent of $x_2$ and the first moment of $f$ (hence that of $F$) along
$x_2$ is zero.
\\
Since $u'\cdot e_2\equiv 0$, we see that $u'(x_1)\times\int_0^{x_1} \beta'(s) u'(s) \ ds$ is parallel to $e_2$
for all $x_1$. Since $F\cdot e_2 \equiv 0$, we deduce from \eqref{equi-4} that
\begin{equation}
\label{equi-7}
\int_S f\cdot\tau
= -\int_0^{\ell} \eta \o F\cdot e_2\times u' = \int_0^{\ell} \eta \o F\cdot n.
\end{equation}
On the other hand, testing \eqref{ELK-1} with $\eta$ we see that
\begin{equation}
\label{equi-6}
\int_0^{\ell} \eta \o F\cdot n = \int_0^{\ell} \o B K'\eta' = \int_0^{\ell} \o B K'\a''.
\end{equation}
But due to \eqref{moments}
\begin{align*}
\int_0^{\ell} \o B K'\a'' = \int_S B K' \left( \a'' + x_2 \b'' \right) = \int_S B K' b_{11}
=  \int_S B A : b.
\end{align*}
Inserting this into \eqref{equi-6}
and recalling \eqref{equi-7}, we conclude that indeed
$$
\int_S B A : b = \int_S f\cdot\tau.
$$
\end{proof}

Now we can assert the existence of a stationary point which is cylindrical.

\begin{theorem}[existence of cylindrical stationary points]\label{excyl}
Let $B\in L^{\infty}(S; [a, b])$ and $f\in L^2(S, \R^3)$ satisfy \eqref{moments}, let $f_0\in\R^3\setminus\{0\}$ with $f_0\cdot e_2 = 0$, 
and assume that  $\o f$ is almost everywhere parallel to $f_0$.
Then there exists a cylindrical deformation $u\in\SetDeformations_S$
which is a stationary point of $\Efree[B,\cdot]$ on $\SetDeformations_S$.
More precisely, every cylindrical deformation $u\in\SetDeformations_S$ whose phase $K$
satisfies \eqref{ELK-1} is a
stationary point of $\Efree[B,\cdot]$ on $\SetDeformations_S$.
\end{theorem}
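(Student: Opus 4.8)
The plan is to reduce the theorem to Proposition \ref{procyl} and Lemma \ref{K'neq0}, after first exhibiting one cylindrical competitor whose phase solves \eqref{ELK-1}.

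First I would settle existence of a phase. The one-dimensional functional $\Efree_{\text{phase}}[B,\cdot]$ in \eqref{en-4} is coercive on $W_{l}^{1,2}(0,\ell)$: since $\o B\geq a>0$ its Dirichlet term controls $\|K'\|_{L^2}^2$, the boundary condition $K(0)=0$ makes Poincar\'e available, and the load term is bounded because $|u'|\equiv 1$, so that $\int_0^\ell \o F\cdot u'$ is controlled independently of $K$. Weak lower semicontinuity of the quadratic term then yields a minimiser, which solves \eqref{ELK-1}; prescribing $u(0)=0$ turns this $K$ into a cylindrical $u\in\SetDeformations_S$ through \eqref{uK}. Thus the existence assertion will follow once the ``more precisely'' statement is established.

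For the second assertion, let $u\in\SetDeformations_S$ be cylindrical with phase $K$ solving \eqref{ELK-1}. I would split according to Lemma \ref{K'neq0}: either $K'$ has infinitely many zeros, in which case $K\equiv 0$, or it has only finitely many, so that $K'\neq 0$ almost everywhere. In the latter (generic) case the hypotheses of Proposition \ref{procyl} hold: the moment conditions \eqref{moments} are assumed, and $f_0\cdot e_2=0$ together with $\o f\parallel f_0$ gives $\o f\cdot e_2=0$ and hence $\o F\cdot e_2=0$. Here I would remark that this averaged condition is in fact all that the proof of Proposition \ref{procyl} uses (the pointwise $e_2\cdot f\equiv 0$ enters only through $\o F\cdot e_2=0$ in the step leading to \eqref{equi-7}), so the conclusion of that proposition applies verbatim and $u$ is stationary on $\SetDeformations_S$.

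The main obstacle is the degenerate case $K\equiv 0$, i.e.\ $u=\id$: then $A[u]=0$, the first variation \eqref{first variation} collapses to $-\int_S f\cdot\tau$, and neither Lemma \ref{lecyl} nor the construction in Proposition \ref{procyl} is available, since both require $A[u]\neq 0$. Inspecting \eqref{ELK-1} with $K\equiv 0$ shows this case forces $\o F\cdot e_3\equiv 0$, i.e.\ $\o F\parallel e_1$ (equivalently $f_0\parallel e_1$ or $\o f\equiv 0$), a purely tangential averaged load; one must then verify directly, using the clamped condition (which forces the limiting in-plane displacement to vanish, so that $\tau$ is purely out-of-plane and its linearised developability $\det\D^2\tau_3=0$ holds) together with both moments in \eqref{moments}, that $\int_S f\cdot\tau=0$. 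I expect this bookkeeping to be the delicate point. By contrast, in the physically relevant transverse regime $f_0\cdot e_3\neq 0$ with $\o f\not\equiv 0$ one has $\o F\cdot e_3\not\equiv 0$, so $K\equiv 0$ cannot solve \eqref{ELK-1}; the minimiser constructed above is then nontrivial and the existence statement follows immediately from the generic case.
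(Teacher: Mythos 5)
Your proposal follows the same skeleton as the paper's own proof: obtain a solution $K$ of \eqref{ELK-1} (the paper invokes the direct method for \eqref{en-4}, exactly as you do), pass to the cylindrical $u$ via \eqref{uK} and $u(0)=0$, and then combine the dichotomy of Lemma \ref{K'neq0} with Proposition \ref{procyl} in the generic case $K'\neq 0$ a.e. One of your side remarks is a genuine improvement on the paper's wording: Theorem \ref{excyl} only yields $\o f\cdot e_2=0$, whereas Proposition \ref{procyl} is stated under the pointwise hypothesis $e_2\cdot f\equiv 0$, and the paper applies the proposition without comment. Your justification is the correct one: in the proof of the proposition the $e_2$-component of the load enters only through $\o F\cdot e_2$ in the step from \eqref{equi-4} to \eqref{equi-7}, the pointwise $x_2$-dependence being absorbed by the first-moment condition \eqref{moments}.

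The one place where you and the paper diverge is the degenerate case $K\equiv 0$: the paper dismisses it as ``trivial'', while you flag it as the main obstacle and leave it as unfinished bookkeeping. Your caution is vindicated, but the situation is worse than delicate: under the stated hypotheses this case cannot be closed at all. If $K\equiv 0$ solves \eqref{ELK-1}, one only learns $\o F\cdot e_3\equiv 0$, which together with \eqref{moments} controls the zeroth and first $x_2$-moments of $f_3$ and nothing more; but bendings of $u=\id$ in $\SetDeformations_S$ include \emph{skew} developable variations, obtained by bending the flat plate along the lines $x_1+cx_2=\mathrm{const}$, $c\neq 0$, with curvature profile supported away from the clamped edge, which produce $\tau=\phi(x_1+cx_2)\,e_3$. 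Taking for instance $f=\cos(2\pi x_2)\,e_3$ (this satisfies \eqref{moments}, has $\o f\equiv 0$, hence is admissible for any $f_0$, and forces $K\equiv 0$ in \eqref{ELK-1}), one can choose $c$ and a bump $\phi$ so that $\int_S f\cdot\tau=\int_S\cos(2\pi x_2)\,\phi(x_1+cx_2)\intd x\neq 0$; since $A[\id]=0$, the first variation \eqref{first variation} is then nonzero and $u=\id$ is \emph{not} stationary. So the degenerate case is not a bookkeeping exercise but a genuine restriction on the hypotheses, and your closing observation already contains the correct resolution: in the transverse regime $f_0\cdot e_3\neq 0$, $\o f\not\equiv 0$, the equation \eqref{ELK-1} admits no solution with $K\equiv 0$, and there your argument (and the paper's) is complete.
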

\begin{proof}
Let $K\in W^{1,2}_l(0, \ell)$ be a solution of \eqref{ELK-1}.
The immersion $u$ defined by \eqref{uK} and the condition $u(0) = 0$
belongs to $\SetDeformations_S$.
Lemma \ref{K'neq0} implies that $K'$ has only finitely many zeros
(the case $K\equiv 0$ is trivial). Hence Proposition \ref{procyl} implies
that $u$ is a stationary point of $\Efree[B,\cdot]$.
\end{proof}

\section{Numerical discretization of nonlinearly elastic plates} \label{sec:numericStateEq}
To discretize bending isometries we follow \cite{Ba11} and make use of the discrete Kirchhoff triangle (DKT) as a suitable finite element space. 
In particular nodal wise degrees of freedom for derivative of the displacement enable to implement the isometry constraint as a simple constraint at nodal positions of a triangular mesh.
Here, we additionally take into account the material distribution $B$.
Different to \cite{Ba11}, where a discrete gradient flow approach 
with a linearized isometry constraint was proposed,
we take into account a Newton method for a associated Lagrangian with an exact isometry constraint at nodal positions.

For simplicity, we assume that $S \subset \R^2$ is polygonal, s.t. we can directly consider a triangulation $\mathcal{T}_{h}$ of $S$.
In particular, this is guaranteed for our case of interest, where $S$ is a rectangular domain.
Otherwise, $S$ could be approximated by a polygonal domain.
Then we denote by $\SetNodes$ the set of nodes in $\mathcal{T}_{h}$.
First, we consider discrete material distributions $B_h$ in the space of continuous and piece-wise affine functions
\begin{align}
   V_h^1(S) := \left\{ B_h \in W^{1,2}(S)  \; : \;  B_h \vert_{T} \in P_{3,\text{red}}(T) \; \forall T \in \mathcal {T}_{h} \right\} \, .
\end{align}
Now, we recall the DKT element \cite{BaBaHo80}.
For a triangle $T$ in $\mathcal{T}_{h}$, let $P_k(T)$ be the space of polynomials of order $k \in \N$.
In analogy, we consider for an edge $E$ the space $P_k(E)$.
Furthermore, we define for a triangle $T$ the space 
\begin{align}
  P_{3,\text{red}}(T) 
  := \left\{ w \in P_3(T) \; : \; w(p_T) = \frac{1}{3} \sum_{p \in \SetNodes \cap T} w(p) + \nabla w(p) (p_T - p) 
     \right\} \, ,
\end{align}
of polynomials of order three reduced by one degree of freedom by
where $p_T = \frac{1}{3} \sum_{p \in \SetNodes \cap T} p$ denotes the center of mass of the triangle $T$. 
This finally lead us to the following finite element spaces.
\begin{align}
 W_h(S) & :=
      \left\{ w_h \in W^{1,2}(S) \; : \;  w_h \vert_{T} \in P_{3,\text{red}}(T) \; \forall T \in \mathcal {T}_{h}, \, 
                                                   \nabla w_h(p) \text { continuous } \forall p \in \SetNodes
      \right\} \, , \\
  \Theta_{h}(S) &:=
      \left\{  \theta_{h} \in \left( W^{1,2}(S) \right)^{2}  \; : \;
                     \theta_{h} \vert_{T} \in \left( P_2(T) \right)^{2} \; \forall T \in \mathcal {T}_{h}, \,
                     \theta_{h} \cdot n \in P_1(E) \text{ for every edge } E
      \right\} \, .
\end{align}
Then we consider a discrete gradient operator
\begin{align}
    \nabla_h \colon W_h(S) \to \Theta_{h}(S) \, , \quad
    w_h \mapsto \nabla_h w_h = \theta_{h} (w_h) \, ,
\end{align}
where $\theta_{h} (w_h) \in \Theta_{h}(S)$ is the uniquely defined function that satisfies for each triangle $T \in \mathcal{T}_{h}$ with nodes $p_0,p_1,p_2$
the interpolation conditions
$\theta_{h} (w_h) (p_{i}) = \nabla w_h (p_{i})$ for $0 \leq i \leq 2$ and
$\theta_{h} (w_h) (p_{ij}) \cdot (p_{j} - p_{i})  = \nabla w_h (p_{ij}) \cdot (p_{j} - p_{i})$ for $0 \leq i, j \leq 2$ 
with $p_{ij} = p_{ji} = \frac {1}{2} (p_{i} + p_{j})$.
This allows defining an approximative second derivative of $w_h$ by $\nabla \theta_{h} (w_h)$.
Note that $w_h$ can be determined by the values $w_h(p)$ and the derivatives $\nabla w_h(p)$ at nodes $p$, and thus, has three degrees of freedom per node.
As in \cite{Ba11}, we use the space $W_{h,\Gamma}(S)^3$ to discretize elastic displacements $w$ with $w(x) = u(x)-x$
as component-wise DKT-functions with clamped boundary conditions on $\Gamma \subset \partial \omega$.
Remember that in the case of the rectangular domain, $\Gamma$ is typically given by the left side.

\par\bigskip

To implement the corresponding discrete energy functionals,
we apply a Gaussian quadrature of degree $6$ with $Q = 12$ quadrature points for each triangle element with weights $\omega$, 
and obtain a discrete bending energy
\begin{align}\label{eq:Eh}
 \Estored_h[B_h,\displacement_h] 
 & = \sum_{T \in \mathcal{T}_h} |T| 
       \sum_{\quadPoint = 1, \ldots, Q} 
          \omega_\quadPoint B_h(x_\quadPoint) \| \nabla \nabla_h \displacement_h(x_\quadPoint) \|^2  
\end{align}
and a discrete potential energy
\begin{align}\label{eq:Fh}
 \Epot_h[\displacement_h] 
 & = \sum_{T \in \mathcal{T}_h} |T| \sum_{\quadPoint = 1, \ldots, Q} \omega_q f_h(\quadPoint) \cdot \displacement_h(\quadPoint)  \, ,
\end{align}
where $|T|$ denotes the area of the triangle $T$.
Consequently, a discrete free energy is given by
$\Efree_h[B_h,\displacement_h] = \Estored_h[B_h,\displacement_h] - \Epot_h[\displacement_h]$.
Note that the isometry constraint in terms of a displacement $\displacement$ is given by
\begin{align} \label{eq:IsometryConstraintPlatePointwise}
 0 =  \left(
  \begin{array}{cc}
    G_h^{11} & G_h^{12} \\
    G_h^{12} & G_h^{22} 
  \end{array}
  \right)[w_h](p)
\end{align}
on all non Dirichlet nodes $p\in \SetInteriorNodes = \SetNodes \setminus \Gamma$, where
$\IsoFct_h^{11}[\displacement_h ]  = 2 \partial_1 \displacement_1 + \sum_{i=1}^3 (\partial_1 \displacement_i)^2$, 
$\IsoFct_h^{12}[\displacement_h ] =  \partial_2 \displacement_1 + \partial_1 \displacement_2 + \sum_{i=1}^3  \partial_1 \displacement_i \partial_2 \displacement_i$, and  
$\IsoFct_h^{22}[\displacement_h ] = 2 \partial_2 \displacement_2 + \sum_{i=1}^3 (\partial_2 \displacement_i)^2$.
To encode the nodalwise isometry constraint we consider 
a vector  $\LMIso_h(p) = \left( \LMIso_h^1(p),\LMIso_h^2(p),\LMIso_h^{12}(p) \right)$ 
of Lagrange multipliers 
and define 
\begin{align}
 \IsoFct_h[\displacement_h, \LMIso_h ] & = \sum_{p \in \SetInteriorNodes} 
      \LMIso_h^1(p) \IsoFct_h^{11}[\displacement_h ] (p)    
 +    \LMIso_h^2(p) \IsoFct_h^{22}[\displacement_h ] (p)
 +    \LMIso_h^{12}(p) \IsoFct_h^{12}[\displacement_h] (p)  \, .
\end{align}
Note that all the values $(\partial_j \displacement_h(p))_{i}$ for $j=1,2$ and $i=1,2,3$ are explicit degrees of freedom for all $p\in \SetInteriorNodes$.
Finally, the discrete Lagrangian functional is given by
\begin{align} \label{eq:discreteLagrangian}
 \mathcal{L}_h[ B_h, \displacement_h, \LMIso_h ]
 = \Estored_h[B_h,\displacement_h] - \Epot_h[\displacement_h]+ \IsoFct_h[\displacement_h,\LMIso_h]  \, . 
\end{align}
Then, to compute for a fixed material distribution given by $B_h$ solutions to the state equation 
we apply Newton's method to solve the saddle point problem
\begin{align} \label{eq:shapeDesignShellsStateEquationLagrangian}
 \partial_{(\displacement_h, \LMIso_h )} \mathcal{L}_h[ B_h, \displacement_h, \LMIso_h ] = 0 \, .
\end{align}

It was shown in \cite{HoRuSi16} that the optimal design for planar beams is simply
$b\chi_{(0, t_0)} + a\chi_{(t_0, 1)}$ for a suitable $t_0\in (0, 1)$.
Naively, one might expect these to apply as well
to the case of cylindrically deformed plates addressed here. However, this is not the case.

In this section we provide an example of a very simple design $B$ 
which is not constant along $x_2$
and which beats any design depending only on $x_1$. It is obtained by putting a horizontal 
strip of hard material across the plate.
In fact, we compare three different material distributions, 
where, depending on the area $\AreaHardPhase$, the subdomain covered with hard material is given by
\begin{enumerate} \item[I.] a layer $[0,\AreaHardPhase] \times [0,1]$ at the clamped boundary, i.e.,
       the solution to the $1D$ problem, 
 \item[II.] a layer $[0,1] \times [0.5 - 0.5 \AreaHardPhase, 0.5 + 0.5 \AreaHardPhase]$ 
       orthogonal to the clamped boundary, and
 \item[III.] a square 
       $[0, \sqrt{\AreaHardPhase} ] 
        \times 
        [0.5 - 0.5 \sqrt{\AreaHardPhase}, 0.5 + 0.5 \sqrt{\AreaHardPhase} ]$ 
       centered in the middle of the clamped boundary.
\end{enumerate}
Here, we consider three area fractions $\AreaHardPhase = 0.25,0.5,0.75$ for the amount of hard material.
In Figure~\ref{fig:CompareDesignsIsometry},
we compare the potential energy of these three designs in dependence of $|f|$.
For all computations, we use a mesh of $|\SetNodes| = 16641$ nodes.
We observe that for a large area fraction $\AreaHardPhase = 0.75$ of the hard material,
the $1D$ optimizer (I) is optimal w.r.t. the potential energy independent of $|f|$.
In any cases, it seems that for large forces design (I) is optimal.
For an area fraction $\AreaHardPhase = 0.5$, and small forces, design (III) is optimal.
For an area fraction $\AreaHardPhase = 0.25$, we even obtain that design (II) is optimal for small forces and design (III) is better on an intermediate range. 
\begin{figure}[!htbp]
\resizebox{1.0\textwidth}{!}{
    {\includegraphics[width=\textwidth]{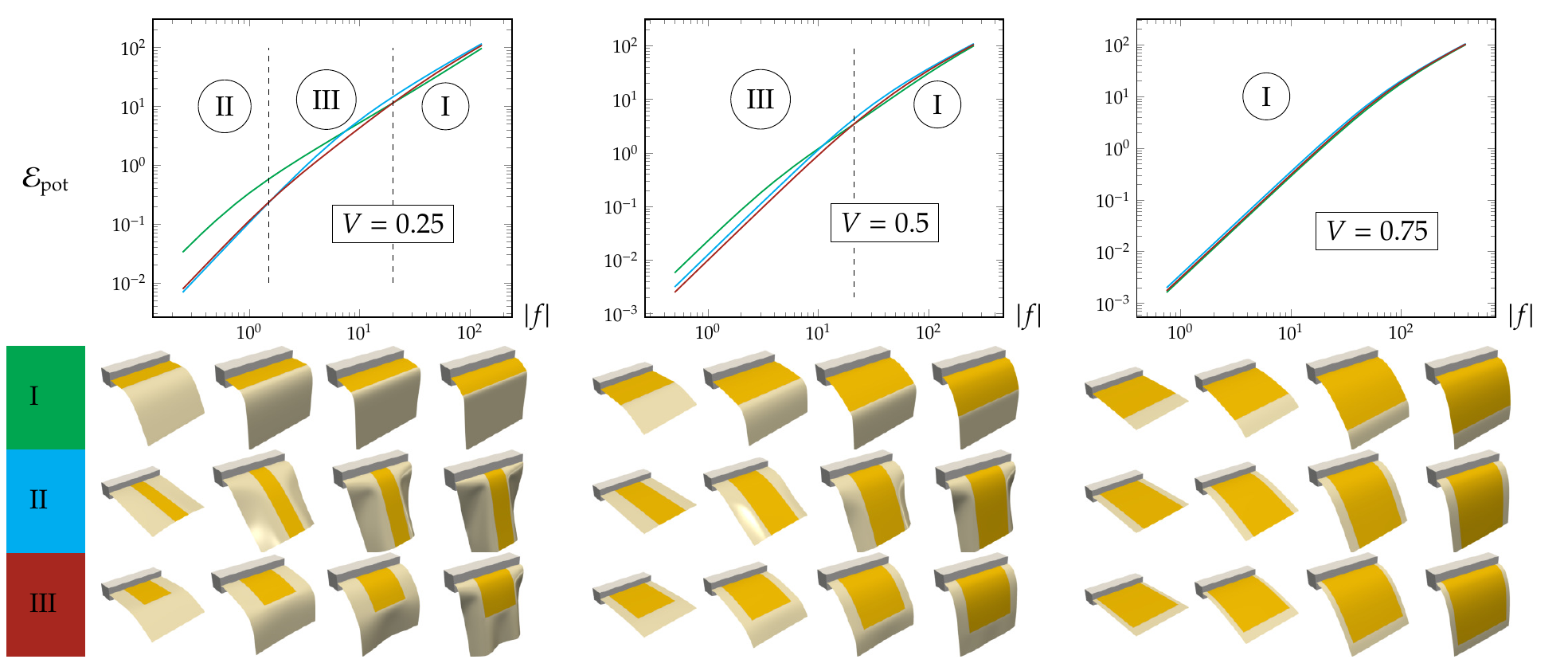}}
}
\caption
[Comparison of three designs for bending isometries.]
{Comparison of the potential energy functional in dependence of $|f|$ in a logarithmic scale 
for three design types and different area fractions $\AreaHardPhase = 0.25,0.5,0.75$ of the hard material.
By dotted lines we separate the ranges, where a specific design is optimal w.r.t. the potential energy.
}
\label{fig:CompareDesignsIsometry}
\end{figure}
Intuitively, it is not surprising that putting in (II) such a thin but extremely
hard `mid rib' ensures that a cylindrically deformed plate will bend down arbitrarily little.
Indeed, even if the
material is very soft elsewhere, the stiff mid rib determines the overall
behaviour. The effect of such a strip can clearly not be emulated by mixing phases along the $x_1$ axis either.
\\
Notice, however, that it is not clear a priori whether it is correct to restrict ourselves to
cylindrical deformations. On one hand, the following numerical simulations suggest
that indeed a hard mid rib leads to a better compliance than a one-dimensional design. On the other hand, they
also suggest that without additional constraints the energy minimising deformation $u : S\to\R^3$
for such a material distribution may not be cylindrical after all. This is
addressed in Section \ref{sec:Developability}.

\section{Kirchhoff plates with nontrivial designs}
\label{sec:nontrivial}

Now, we ask for an optimal design of a plate, where by a  
design we mean an optimal choice of the hardness function $B$. 
We consider this as a specific shape optimisation problem and 
describe optimality of a design $B$ via the 
minimization of the compliance as the most commonly used cost functional. 
In Section \ref{sec:plate} we have shown that there exists under suitable assumptions on the 
boundary conditions a unique minimum in the class of isometric,
cylindrical deformations.
Under suitable symmetry assumptions \eqref{moments}
on the hardness function $B$ and the force $f$ 
this deformation is a stationary point of the plate energy in the 
class of general isometric deformations.
\\
Thus, in what follows we will as well assume \eqref{moments} with the force $f \equiv -e_3$ 
and restrict ourselves to cylindrical deformations.
In this section we do not yet seek the optimal design; instead,
we discuss in the light or our previously discussed numerical findings in Section \ref{sec:numericStateEq} explicit designs and compare them to each other.

For a given design $B\in L^{\infty}(S)$ we denote by $K[B] \in W^{1,2}_l(0, \ell)$
the unique global minimiser (see \cite{HoRuSi16} for existence and uniqueness)
of the functional $\Efree_{\text{phase}}(B,\cdot)$ defined in \eqref{en-4}; as usual 
$\o B(x_1) = \int_{-1/2}^{1/2} B(x_1, x_2)\ dx_2$.
In order to simplify the notation we assume that $\ell = 1$. Then,
for $f\equiv - e_3$, the corresponding state equation reads
$$
(\o B K[B]')' = (1 - t)\cos K[B].
$$
The compliance to be minimised by the optimal design is  defined as  
functional
\begin{equation}
\label{compli-01}
\mathcal{F}(B) =  \int_0^1  u_B(t) \cdot e_3 \ dt =  \int_0^1 (1-t)|\sin K[B](t)|\ dt\,.
\end{equation}
For the reformulation we used integration by parts and the fact that
$K[B] \in (-\pi,0)$ on $(0,1)$.
In what follows, we are at first only interested in the question which of two given
designs $B_I$, $B_\II : S\to\{a, b\}$ leads to a smaller value for
the right-hand side of \eqref{compli-01}, where 
\begin{align*}
B_I(x_1, x_2) &= \chi_{(0, \AreaHardPhase)}(x_1)b + \chi_{(\AreaHardPhase, 1)}(x_1)a,\\
B_\II(x_1, x_2) &= \chi_{(-\AreaHardPhase/2, \AreaHardPhase/2)}(x_2)b + 
\left(1 - \chi_{(-\AreaHardPhase/2, \AreaHardPhase/2)}(x_2)\right)a
\end{align*}
are the two designs already depicted in Figure \ref{fig:CompareDesignsIsometry} for the cases I and II, respectively. It was shown in \cite{HoRuSi16} that
design $B_I$ is the best among all designs which are independent of $x_2$. 
But, the numerical observations in Figure \ref{fig:CompareDesignsIsometry}
suggest that the design $B_\II$ might be better in the general case, which we will now 
verify analytically. To this end, we will first fix some relative area of 
hard phase $\AreaHardPhase > 0$. Then we choose the hardness $b \gg 2/\AreaHardPhase$.
Observe that both $B_I$ and $B_\II$ have the same area $\AreaHardPhase$ of hard phase $b$.

The averaged design $\o B_\II : (0, 1) \to [a, b]$ is given by
\begin{equation}
\label{arithmeticmean}
\o B_\II(x_1) = \int_{-1/2}^{1/2} B_\II(x_1, x_2)\intd x_2
= \AreaHardPhase b + (1 - \AreaHardPhase)a.
\end{equation}
In what follows we will apply the maximum principle to the state equation.
In order to transform the state equation into a boundary value problem, we extend 
$K_I = K[B_I]$ from the interval
$[0, 1]$ to the interval $[0, 2]$ by reflection about $t = 1$,
i.e., we define
$$
K_I(t) = K_I(2 - t) \mbox{ if }t\in [1, 2].
$$
Other functions, such as $B_I$, are extended to $[0, 2]$ in the same way.
With this notation, the original state equation $(B_I K_I')' = (1-t)\cos K_I$
on $(0, 1)$ with mixed Dirichlet-Neumann boundary
conditions $K_I(0) = 0$ and $K_I'(1) = 0$ is equivalent to
the Dirichlet boundary value problem
$(B_I K_I')' = |1-t|\cos K_I\mbox{ on }(0, 2)$ and 
$K_I(0) = K_I(2) = 0$.
We can apply the maximum principle to this semilinear problem, because $K_I$ takes
values in $(-\pi/2, 0]$. 
In what follows we will apply this maximum principle directly
to the equation on $(0, 1)$, without extending it explicitly to $(0, 2)$ every time.
\\
We construct a barrier for the solution $K_I = K[B_I]$ of the state equation
$$
(B_I K_I')' = (1-t)\cos K_I \mbox{ on }(0, 1)
$$
corresponding to the design $B_I$.
\\
Let $\o K_I : (\AreaHardPhase, 1) \to (-\pi/2, 0]$ be the solution of
$
-a\o K_I'' + (1-t)\cos\o K_I = 0 \mbox{ on }(\AreaHardPhase, 1)
$
with boundary conditions $\o K_I(\AreaHardPhase) = 0$ and $\o K_I'(1) = 0$.
We define the barrier function $\h K_I : (0, 1)\to (-\pi/2, 0]$ by setting
$$
\h K_I(t) = \chi_{(\AreaHardPhase, 1)}\o K_I.
$$
Then $\h K_I(0) = K_I(0) = 0$ and $\h K_I'(1) = K_I'(1) = 0$. And
$$
-(B_I \h K_I')' + (1-t)\cos\h K_I \geq 0.
$$
Thus, the maximum principle implies that
$
\h K_I \geq K_I \mbox{ on }(0, 1).
$
Observe that this is true for any choice of $b > a$.
It implies (using that $K_I$, $\h K_I$ take values in $[-\pi/2, 0]$)
$|\sin\h K_I| \leq |\sin K_I| \mbox{ on }(0, 1)$.
Hence the compliances satisfy, as claimed,
$$
\int_0^1 |1-t| |\sin\h K_I(t)|\ dt \leq \int_0^1 |1-t| |\sin K_I(t)|\ dt.
$$
The left-hand side is positive and independent of $b$. It will be denoted by $\varepsilon$.
\\
Next, we consider the design $B_\II$.
Since $\o B_\II$ is constant, $K_\II$ satisfies the state equation
$$
-\o B_\II K_\II'' + |1-t|\cos K_\II = 0 \mbox{ on }(0, 1).
$$
Dividing by $\o B_\II$ and taking absolute values we deduce
$$
|K_\II''| \leq \frac{1}{\o B_\II}.
$$
Since $K_\II'(1) = 0$, this implies
$
|K_\II'(t)| \leq \frac{1}{\o B_\II}.
$
And since $K_\II(0) = 0$, this implies $|K_\II|\leq\frac{1}{\o B_\II}$.
Hence
$$
|\sin K_\II|\leq \sin\frac{1}{\o B_\II}\leq\frac{\varepsilon}{2},
$$
provided $b$ (and thus $\o B_\II$) is large enough.
For such $b$ the compliance of $B_\II$ satisfies
$$
\int_0^1 |1-t| |\sin K_\II(t)|\ dt
\leq \frac{\varepsilon}{2}.
$$
It is therefore strictly better than the compliance of $B_I$.

Summarising, the examples depicted in Figure \ref{fig:CompareDesignsIsometry} show that the optimal design $B : S\to\R$ must depend on $x_2$,
because the design with a stiff horizontal strip has a better compliance than even the best one-dimensional design.
\\
After considering these examples we may ask firstly whether it was legitimate to restrict ourselves
to cylindrical deformations in the analysis.
In fact, the numerical simulations suggest otherwise.
In Section \ref{sec:Developability} we will address this question.
\\
Secondly, we may ask whether the horizontal strip is the best two dimensional design.
In Section \ref{sec:properties} we will derive the Euler-Lagrange equation for the optimal design among cylindrical deformations. 
It turns out that it never consists of a mid rib of hard material of uniform width. Instead, it consists of
a `mid rib' that becomes narrower as the distance from the clamped left boundary increases.

\section{Structure of solutions under symmetry requirement} \label{sec:Developability}
From now on we assume that the symmetry assumption \eqref{moments} holds.
The numerical simulations in Figure \ref{fig:CompareDesignsIsometry} suggest that
stationary points $u$ of $\Efree[B,\cdot]$ enjoy the same mirror symmetry about the $\{x_2 = 0\}$ plane
as $B$ and $f$. But they also suggest that $u$ may not be cylindrical.
Thus, we will provide a rigorous analysis leading to a qualitative
global description of deformations which are symmetric about the $\{x_2 = 0\}$ plane but not necessarily cylindrical.
\\
This description allows us to identify a mild and reasonably natural
additional hypothesis on the `right' part of the boundary
of the plate which ensures that any symmetric deformation satisfying it is in fact cylindrical.
However,  we will see that the numerical simulations display
deformations which are symmetric, which satisfy the additional condition on the right boundary
and which nevertheless at a rather high spatial resolution of the mesh still fail to be cylindrical. 
We will discuss this discrepancy between analysis and numerics.
\medskip

Let us consider the immersion $\t u$ obtained by reflecting an immersion $u$ along the $x_2 = 0$ plane with 
\begin{equation}
\label{sym-0}
u(x) = \t u(\t x)
\end{equation}
where $\t x = (x_1, -x_2)$ and
$$
\t u
=
\begin{pmatrix}
u_1
\\
- u_2
\\
u_3
\end{pmatrix}.
$$
Deformations $u$ satisfying \eqref{sym-0} will be called {\em symmetric} in what follows.
Observe that the deformations depicted in Figure \ref{fig:CompareDesignsIsometry} appear to be symmetric.
Let $u\in W^{2,2}(S, \R^3)$ be a symmetric immersion. Then the following are true
for almost every $x\in S$:
\begin{enumerate}[(i)]
\item We have
$\d_2 u(x) = -\d_2\t u(\t x)$ and $\d_1 u(x) = \d_1\t u(\t x)$.
In particular,
$
\d_2 u(x_1, 0) \ \parallel e_2 \mbox{ for a.e. }x_1.
$
\item For $\a = 1, 2$ we have 
$\d_{\a}\d_{\a} u(x) = \d_{\a}\d_{\a}\t u(\t x)$. Moreover,
$\d_1\d_2 u(x) = -\d_1\d_2\t u(\t x)$.
\item The normal to $u$ satisfies
$
n(x) = \t n(\t x).
$
\item The second fundamental form of $u$
satisfies
$$
A_{\a\a}(x) = A_{\a\a}(\t x)\mbox{ for }\a = 1, 2,\quad  A_{12}(x) = -A_{12}(\t x).
$$
\end{enumerate}
The statements are straightforward for smooth maps and they follow by approximation for $W^{2,2}$ maps.

The following result shows that the level set structure of $\D u$ of symmetric maps
$u\in\SetDeformations_S$ is heavily restricted: it must begin with segments parallel to $e_2$ and
then there follows a (possibly truncated) triangle on which $u$ is affine.
We refer to Figure~\ref{fig:EOCHomMaterialForceLR} for numerical simulation, where such a triangle appears.

\begin{proposition}\label{levelset}
Let $u\in\SetDeformations_S$ be symmetric. Then there exists $m\in [0, \ell]$
such that $u$ is cylindrical on $[0, m]\times (-\frac{1}{2}, \frac{1}{2})$ and
there exists $r\in [0, \frac{1}{2})$ such that $u$ is affine on the convex hull of
$$
\left\{
\left(m, \frac{1}{2}\right),
\left(m, - \frac{1}{2}\right),
\left(\ell, r\right),
\left(\ell, -r\right)
\right\}.
$$
\end{proposition}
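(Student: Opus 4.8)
The plan is to read off the global level-set structure of $\D u$ from developability together with the reflection symmetry, using that through every point of the curved region $S\setminus C_{\D u}$ there passes a unique ruling $[z]$ reaching $\d S$ on which $\D u$ is constant. First I would record that the reflection $x\mapsto\t x=(x_1,-x_2)$ maps the ruling field to itself: if $\D u$ is constant on a segment, then by property (i), $\d_1 u(x)=\d_1\t u(\t x)$ and $\d_2 u(x)=-\d_2\t u(\t x)$, so $\D u$ is again constant on the reflected segment, which is therefore also a ruling, and $C_{\D u}$ is reflection invariant. The decisive consequence is that a ruling meeting the axis $\{x_2=0\}$ is carried to a ruling through the same point; by uniqueness of the ruling through a point of $S\setminus C_{\D u}$ it must coincide with its reflection, hence be either parallel to $e_2$ or the full axis. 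A vertical ruling through an axis point is necessarily the entire segment $\{x_1\}\times I$, so $\D u$ is independent of $x_2$ there; thus wherever the axis lies in the curved region, $u$ is cylindrical on the corresponding vertical line.

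Next I would set $m=\sup\{t\in[0,\ell]:u\text{ is cylindrical on }[0,t]\times I\}$, so that $u$ is cylindrical on $[0,m]\times I$ by $C^1$-continuity of $\D u$, the clamped condition $\D u=\Id$ on $\{0\}\times I$ fixing this part on the clamped side. The key step is a strip argument showing the axis beyond $m$ enters the flat region: if some $\{c\}\times I$ with $c\in(m,\ell)$ were a ruling, then in the strip $(m,c)\times I$ — whose only portions of $\d S$ are the top and bottom edges — every ruling would have to join the top and bottom edges (an edge-to-edge segment cannot have both endpoints on one horizontal edge), hence cross the axis and so be vertical, making $u$ cylindrical on $[m,c]\times I$ and contradicting the maximality of $m$. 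Combined with the first paragraph this forces $(m,\ell)\times\{0\}\subset C_{\D u}$.

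Let $U$ be the connected component of $C_{\D u}$ containing $(m,\ell)\times\{0\}$; it is reflection invariant, and since $\D u$ is locally constant on the connected open set $U$ it is constant there, so $u$ is affine on $U$. It then remains to identify $U$ as the open trapezoid. Its boundary inside $S$ is a union of rulings, none of which may cross the axis (else it would be vertical, contradicting the previous step), so the boundary rulings consist of the single self-symmetric vertical ruling $\{m\}\times I$ on the left together with reflected pairs in the open upper and lower halves. The extreme upper boundary ruling emanates from the corner $(m,\tfrac12)$ and, being edge-to-edge, confined to the upper half, and not running along the top edge, must terminate on the right edge at some height $r\in[0,\tfrac12)$; its reflection runs from $(m,-\tfrac12)$ to $(\ell,-r)$. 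Hence $U$ is exactly the open trapezoid with vertices $(m,\pm\tfrac12)$, $(\ell,\pm r)$, and passing to the closure via $C^1$-continuity shows $u$ is affine on its convex hull, as claimed.

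The main obstacle is this last identification: controlling the global geometry of the non-crossing ruling field precisely enough to match $U$ with the trapezoid. One must rule out extra boundary rulings denting $U$, and one must accommodate the possibility that curved corner regions (conical fans based at $(m,\pm\tfrac12)$) fill the parts of $(m,\ell)\times I$ above the upper ruling and below the lower ruling; such fans do not affect $U$, since each is separated from $U$ by the single extreme ruling, but verifying that the upper and lower boundaries of $U$ are each a single ruling reaching the right edge — rather than a broken line, which is impossible because rulings do not terminate in the interior — is exactly where the edge-to-edge and non-crossing properties must be invoked carefully. Finally I would dispose of the degenerate cases, namely $m=\ell$ (cylindrical on all of $S$, the trapezoid degenerating to a segment), the globally affine case (take $m=\ell$), and the exceptional full-axis ruling, each of which is immediate.
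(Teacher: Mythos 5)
Your strategy is the same as the paper's: reflection symmetry forces any ruling through an axis point to be vertical (the paper's Claim 1, which you argue via $\nabla u$ where the paper uses the normal), a strip/barrier argument shows the axis beyond $m$ lies in $C_{\nabla u}$ (the paper's Claim 2), and one then studies the flat component $U$ containing the tail of the axis, whose boundary rulings in the open upper half must run from the top edge to the right edge. The genuine gap is in the final identification step, exactly the one you flag as the main obstacle: you assert that the extreme upper boundary ruling of $U$ \emph{emanates from the corner} $(m,\tfrac12)$, but nothing in your argument proves this. Your ``a broken line is impossible because rulings do not terminate in the interior'' remark only excludes two boundary rulings meeting at an \emph{interior} vertex; it does not exclude two pieces of $\partial U$ joined through a stretch of $\partial S$. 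Concretely, the extreme ruling may emanate from $(m',\tfrac12)$ with $m'>m$, with the whole strip $(m,m')\times I$ belonging to $U$, so that $U$ is the hexagon $\mathrm{conv}\{(m,\pm\tfrac12),(m',\pm\tfrac12),(\ell,\pm r)\}$ rather than your trapezoid. This is precisely the configuration the paper confronts: it proves $U$ equals this hexagonal convex hull by invoking convexity of flat components (a line through a point of $S\setminus C_{\nabla u}$ cannot meet $C_{\nabla u}$ twice, cited from \cite{Ho11a}) --- a global tool you never use --- and then disposes of the case $m'>m$ by observing that $\nabla u$ is constant on $(m,m')\times I$, so $u$ is cylindrical on $[0,m']\times I$ and the proposition holds with $m'$ in place of $m$.

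With your definition of $m$ (maximal extent of cylindricity, rather than the paper's $\sup\{m':(m',0)\in S\setminus C_{\nabla u}\}$) the hexagon with $m'>m$ indeed cannot occur, but that requires an argument you never make: if it occurred, $\nabla u$ would be constant on $(m,m')\times I$ with second column equal to $e_2$ (its value on the axis, where symmetry, continuity and the clamped condition give $\partial_2 u=e_2$), so $u$ would be cylindrical on $[0,m']\times I$, contradicting maximality. Supplying this, together with an actual proof that $U$ is exactly the region bounded by the identified rulings (either the paper's convexity citation, or a separation argument: a boundary ruling has both endpoints on $\partial S$, hence separates $S$, so at most one ruling can bound $U$ from above), would close the gap. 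A related omission occurs on the left: with your $m$ it can happen that $(m,0)\in C_{\nabla u}$, in which case the left boundary of $U$ is not $\{m\}\times I$ but a vertical ruling strictly further left; the proposition still holds because the trapezoid is contained in $\overline U$, but your claim that $U$ ``is exactly the open trapezoid'' is then false, so the argument as written needs this case distinction as well.
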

\begin{proof}
{\it Claim 1.} If $k\in (0, \ell)$ and $(k, 0)\in S\setminus C_{\D u}$ 
then $u$ is affine on $\{k\}\times I$.
\\

In fact, set $z = (k, 0)$. Since $u$ is affine along $\{0\}\times I$, the segment $[z]$
does not intersect $\{0\}\times I$. Otherwise $z\in C_{\D u}$ because
neighbouring segments would intersect $\{0\}\times I$ as well.
\\
The normal $n$ to $u$ is constant
on $[z]$. Since $n(x) = \t n(\t x)$, it follows that
$n$ is also constant on 
$$
\t{[z]} = \{x\in S : \t x\in [z]\}.
$$
Since $z\in [z]\cap \t{[z]}$, by uniqueness we have $[z] = \t{[z]}$.
This is only possible if $[z]$ is parallel to $e_2$. (The possibility
that $[z]$ is parallel to $e_1$ has been ruled out earlier by observing
that $[z]$ cannot intersect $\{0\}\times I$.)
\\

{\it Claim 2.}
Let $k_1\in [0, \ell)$ and $k_2\in (k_1, \ell]$ and
assume that $\D u$ is constant on each segment $\{k_i\}\times I$, for $i = 1, 2$. Then
$u$ is cylindrical on $[k_1, k_2]\times (-\frac{1}{2}, \frac{1}{2})$.
\\

In fact, otherwise there would exist a $k\in (k_1, k_2)$ such that
$(\{k\}\times I)\setminus C_{\D u}$ contains a point $z$.
Since $u$ is affine on the segments $\{k_i\}\times I$, we see that $[z]$ cannot intersect
$\{k_1, k_2\}\times I$. Hence there exists $k'\in (k_1, k_2)$ such that
$(k', 0)\in [z]$. But then Claim 1 implies that $\D u$ is constant on $\{k'\}\times I$.
So by uniqueness we must have $[z] = \{k'\}\times I$ and therefore $k = k'$.
This completes the proof of Claim 2.
\\

Since $u$ is affine on $\{0\}\times I$ due to the boundary conditions,
combining Claim 2 with Claim 1 we see that $u$ is cylindrical
on $[0, k]\times I$ whenever $(k, 0)\in S\setminus C_{\D u}$.
\\
Therefore, if 
$$
m = \sup\{m'\in (0, \ell) : (m', 0)\in S\setminus C_{\D u}\},
$$
then $u$ is cylindrical on $[0, m]\times I$
(so let us assume that $m < \ell$ since otherwise there is nothing left to prove),
and $(m, \ell)\times\{0\}$ is contained in $C_{\D u}$.
By connectedness, it is contained in a single connected component $U$ of $C_{\D u}$.
Clearly $\{m\}\times I\subset \d U$.
\\
Now let $z\in S\cap\d U$ be such that $z_1 > m$ and $z_2 > 0$. Then $[z]\subset\d U$ 
otherwise [z] would intersect $C_{\D u}$ (cf. \cite{Ho11a}) and $[z]$ can neither intersect
$\{m\}\times I$ (by uniqueness) nor $(m, \ell)\times\{0\}$ (because this set is contained in $C_{\D u}$).
Hence there exists $m' \in [m, \ell)$ and $r\in [0, \frac{1}{2}]$ such that $[z]$ is the segment
with endpoints $(m', \frac{1}{2})$ and $(\ell, r)$. By symmetry also the segment
with endpoints $(m' , -\frac{1}{2})$ and $(\ell, -r)$ is contained in $\d U$.
\\
Furthermore,  $C_{\D u}$ is convex because for $\t z\in S\setminus C_{\D u}$  line through $\t z$ 
can not intersect $C_{\D u}$ twice.
(see e.g. \cite[Section 3.2]{Ho11a}), we conclude that $U$ is the convex hull
of the points $(m, \pm 1/2)$, $(m', \pm 1/2)$ and $(\pm r, \ell)$. The claim follows from this,
because on $(m, m')\times I$ the map $\D u$ is constant, hence $u$ is cylindrical on all of
$[0, m']\times I$.
\end{proof}

The following result asserts that every symmetric deformation $u$ satisfying a
mild additional condition is in fact cylindrical. 
This additional conditions is that $u$ must not bend the `right' boundary.

\begin{corollary}\label{cor1}
Let $u\in \SetDeformations_S$
be symmetric and assume that
\begin{equation}
\label{derecha}
\Big|u\left(\ell, \frac{1}{2}\right) - u\left(\ell, -\frac{1}{2}\right)\Big| = 1.
\end{equation}
Then $u$ is cylindrical.
\end{corollary}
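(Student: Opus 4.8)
The plan is to show that the mild condition \eqref{derecha} forces the transverse derivative $\d_2 u$ to equal $e_2$ almost everywhere on $S$, after which the isometry constraint and the clamped condition instantly give cylindricity. I would first record that, since $u$ is an isometry, the curve $c:x_2\mapsto u(\ell,x_2)$ is parametrised by arclength, so $|\d_2 u(\ell,\cdot)|=1$ and $c$ has length one. Hypothesis \eqref{derecha} asserts $\big|\int_{-1/2}^{1/2}c'\big|=\int_{-1/2}^{1/2}|c'|$, and equality in the integral triangle inequality forces $c'$ to be an a.e.\ constant unit vector; thus $\d_2 u(\ell,x_2)=\mathbf v$ for some fixed $\mathbf v$ and all $x_2\in I$, i.e. the right boundary is mapped to a straight segment.

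Next I would use Proposition~\ref{levelset} to fix $m\in[0,\ell]$ and $r\in[0,\tfrac12)$ such that $u$ is cylindrical on $[0,m]\times I$ and affine on the trapezoid $T=\mathrm{conv}\{(m,\pm\tfrac12),(\ell,\pm r)\}$. If $m=\ell$ there is nothing to prove, so I assume $m<\ell$. On $[0,m]\times I$ cylindricity gives $\d_2 u=e_2$; since $u\in C^1(S)$ (Kirchheim), $\d_2 u(m,\cdot)=e_2$, and because $u|_T$ is affine its constant derivative satisfies $\d_2 u|_T=e_2$. In particular $\d_2 u=e_2$ on the right edge $\{\ell\}\times[-r,r]$ of $T$, so the constant vector above is $\mathbf v=e_2$; hence $\d_2 u(\ell,\cdot)\equiv e_2$ on all of $\{\ell\}\times I$.

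The remaining task is to propagate $\d_2 u=e_2$ into the two corner triangles $\Delta^{\pm}$ with vertices $(m,\pm\tfrac12),(\ell,\pm\tfrac12),(\ell,\pm r)$. Since $C_{\D u}$ is convex and coincides with $T$ near $[m,\ell]\times I$, the triangles $\Delta^{\pm}$ lie in $S\setminus C_{\D u}$ and are therefore developable. For $z$ in the interior of $\Delta^{+}$, the ruling $[z]$ has both endpoints on $\d S$, cannot cross $C_{\D u}=T$ nor the ruling forming the upper edge of $T$, and hence must run from a point of the top edge $[m,\ell]\times\{\tfrac12\}$ to a point $(\ell,\sigma)$ with $\sigma\in[r,\tfrac12]$ on the right edge. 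As $\D u$ is constant along $[z]$ and $\d_2 u(\ell,\sigma)=e_2$ by the previous step, we get $\d_2 u=e_2$ along $[z]$, and letting $z$ vary yields $\d_2 u=e_2$ a.e.\ on $\Delta^{+}$; symmetry gives the same on $\Delta^{-}$. Combining with the preceding step, $\d_2 u=e_2$ a.e.\ on $[m,\ell]\times I$, and thus on all of $S$.

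To conclude I would use $(\D u)^{T}\D u=\Id$: from $\d_2 u=e_2$ we get $\d_1 u\cdot e_2=0$, i.e. $\d_1 u_2=0$, which together with $\d_2 u_2=1$ and $u_2(0,x_2)=x_2$ gives $u_2(x_1,x_2)=x_2$, while $\d_2 u_1=\d_2 u_3=0$ shows $u_1,u_3$ are independent of $x_2$; hence $u(x_1,x_2)=(u_1(x_1,0),x_2,u_3(x_1,0))$ is cylindrical. I expect the only delicate point to be the ruling analysis in the corners: one must guarantee that \emph{every} ruling meeting $\Delta^{\pm}$ terminates on the right edge, so that the boundary information from \eqref{derecha} can be carried inward. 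This rests on the fine structure of developable $W^{2,2}$ isometries --- convexity of $C_{\D u}$, non-crossing of distinct rulings in the interior, and the fact that a ruling through $S\setminus C_{\D u}$ cannot enter $C_{\D u}$ --- all of which are available from \cite{Ki01,Pa04,Ho11a} and the proof of Proposition~\ref{levelset}. The remaining steps are elementary.
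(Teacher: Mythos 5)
Your first step---equality in the triangle inequality forces $\d_2 u(\ell,\cdot)$ to be a constant unit vector, so that \eqref{derecha} together with the isometry makes $u$ affine on $\{\ell\}\times I$---is exactly the paper's starting point. But from there the paper finishes in one line: the clamped condition makes $u$ affine on $\{0\}\times I$ as well, and Claim~2 in the proof of Proposition~\ref{levelset}, applied with $k_1=0$ and $k_2=\ell$, immediately yields that $u$ is cylindrical on all of $S$. The full decomposition of Proposition~\ref{levelset}, the derivative matching across $\{m\}\times I$, and the ruling analysis in the corner triangles are all unnecessary: Claim~2 was set up precisely so that affineness on two vertical segments propagates cylindricity to the whole strip between them.

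Your longer route also contains a genuine gap at the very point you flag as delicate. You assert that ``since $C_{\D u}$ is convex and coincides with $T$ near $[m,\ell]\times I$, the triangles $\Delta^{\pm}$ lie in $S\setminus C_{\D u}$.'' Neither half of this is available: the convexity used in the paper concerns a single connected component of $C_{\D u}$, and Proposition~\ref{levelset} only identifies the component $U$ containing the midline segment $(m,\ell)\times\{0\}$; nothing excludes further components of $C_{\D u}$ sitting inside $\Delta^{\pm}$ (an open flat patch bounded by two rulings joining the top edge to the right edge is perfectly consistent with everything established up to that point). On such a component your ruling argument produces nothing, so ``$\d_2 u = e_2$ a.e.\ on $\Delta^{+}$'' does not follow as written. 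The gap is reparable: any such component $V$ is trapped in the open triangle (it cannot meet the hypotenuse, which is itself a ruling, since two distinct rulings cannot cross at an interior point of $S$), its relative boundary consists of rulings which by your own confinement argument end on $\{\ell\}\times[r,\tfrac12]$, and since $\D u$ is constant on $V$ and continuous, $\d_2 u = e_2$ holds on $V$ as well. With that patch your proof goes through, but it remains a considerable detour compared with invoking Claim~2 directly.
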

\begin{proof}
Together with the isometry of $u$, the hypotheses imply that $u$ is affine on 
$\{0\}\times I$ and on $\{\ell\}\times I$.
Hence Claim 2 in the proof of Proposition \ref{levelset} shows that $u$ is cylindrical
on $S$.
\end{proof}
In Figure~\ref{fig:EOCHomMaterialForceLR} we compute for a homogeneous material distribution $B = 1$ and $l=1$ the solution to the state equation w.r.t. the force 
\begin{align*}
 f = 
   \chi_{[0.9,1] \times [0,0.1]} \begin{pmatrix}   0 \\ 50 \\ 1 \end{pmatrix}
 + \chi_{[0.9,1] \times [0.9,1]} \begin{pmatrix}   0 \\ -50 \\ 1 \end{pmatrix} \, .
\end{align*}
\begin{figure}[!htbp]
\resizebox{\textwidth}{!}{
\begin{tabular}{  c  c  c  c  }
  deformed config. & approx. affine region & image of Gauss map & discrete Gauss curvature $\kappa_h$ \\ 
   \begin{minipage}{0.3\textwidth} \centering 
     \includegraphics[max width=0.9\linewidth, max height=0.9\linewidth]{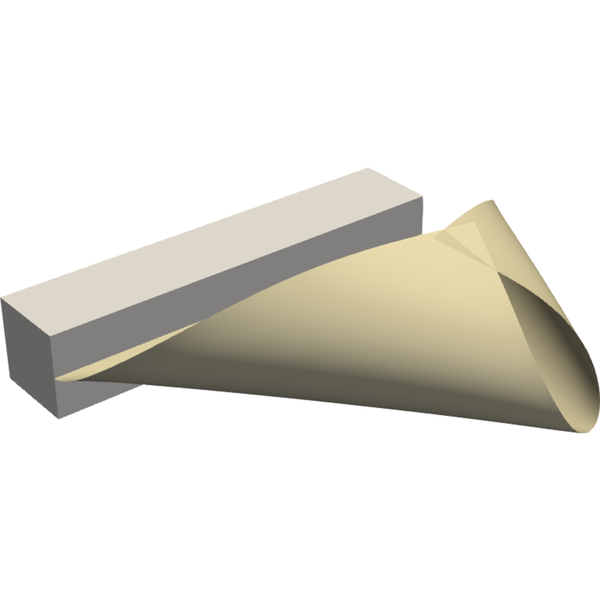}
    \end{minipage}
  & \begin{minipage}{0.3\textwidth} \centering 
     \includegraphics[max width=0.9\linewidth, max height=0.9\linewidth]{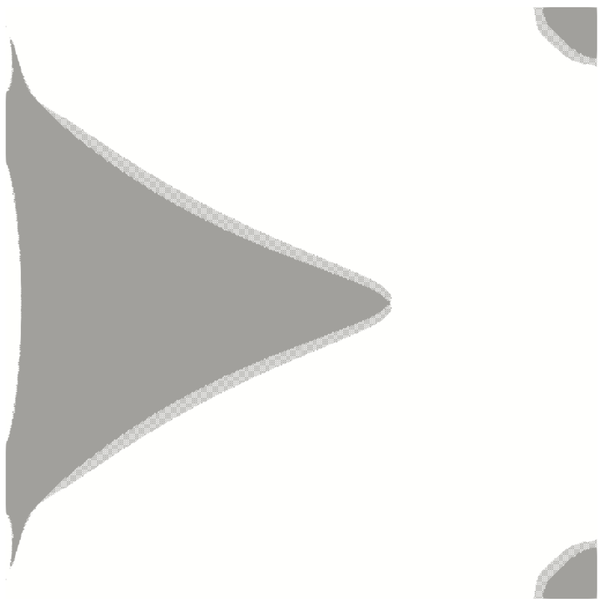}
    \end{minipage}
   & \begin{minipage}{0.3\textwidth} \centering 
     \includegraphics[max width=0.9\linewidth, max height=0.9\linewidth]{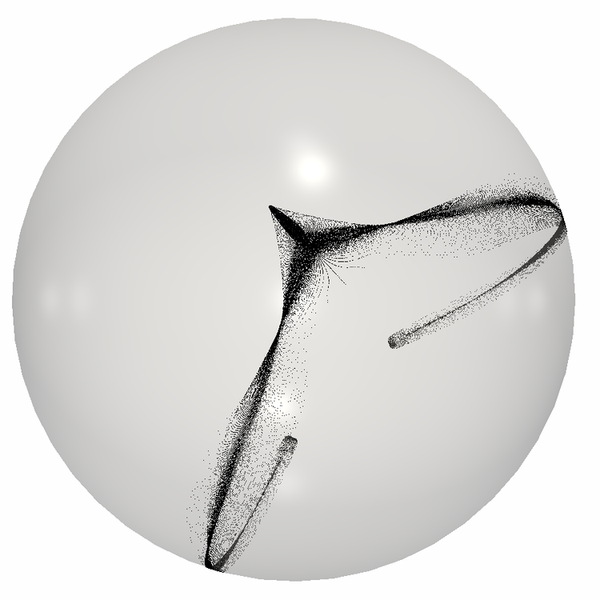}
    \end{minipage}
     & \begin{minipage}{0.3\textwidth} \centering 
     \includegraphics[max width=0.9\linewidth, max height=0.9\linewidth]{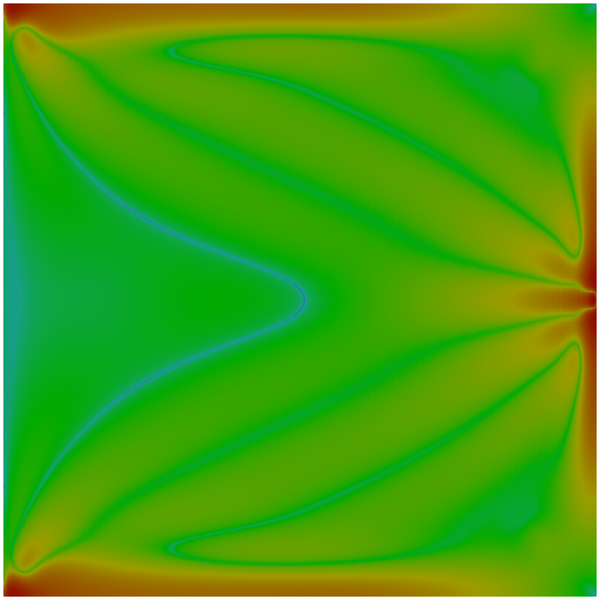}
    \end{minipage}
    \begin{minipage}{0.1\textwidth}
     \begin{tikzpicture}
        \node at (0, 0){\includegraphics[width=0.1\textwidth,height=2.5\textwidth]{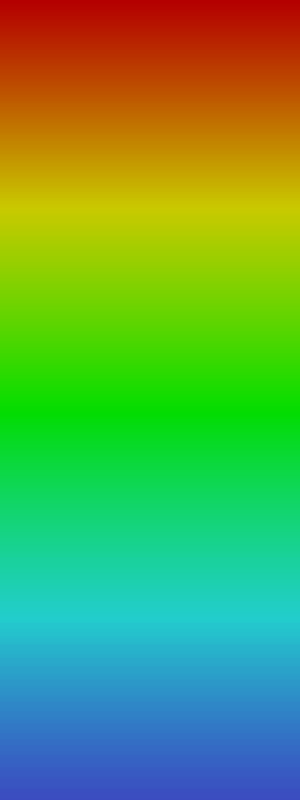}};
        \node at (0.8,2.0) {\tiny $7.2 \cdot 10^1$};
        \node at (0.8,0.) {\tiny $5.8 \cdot 10^{-2}$};
        \node at (0.8,-2.0) {\tiny $4.4 \cdot 10^{-5}$};
     \end{tikzpicture}
    \end{minipage}

  \\ 
\end{tabular}
}

\vspace*{0.3cm}

\resizebox{\textwidth}{!}{
\begin{tabular}{ | c | c | c | c | c | c | c | c | c | c |  c |   }
\hline
$h$        & $|\D u^T \D u - \Id |_{L^1}$ & EOC                     & $|\kappa_h |_{L^1}$ & EOC & $| \nabla \nabla_h (u_h - u_{h_{\text{min}}})|_{L^2}$ & EOC \\ \hline
0.0441942  & 0.0638301           &                         &  3.99201 &          & 36.6783 &  \\ \hline
0.0220971  & 0.028775            & 1.14942                 &  2.93039 & 0.446022 & 18.6961 & 0.972193 \\ \hline
0.0110485  & 0.0124149           & 1.21274                 &  2.08379 & 0.491881 & 9.66444 & 0.951975 \\ \hline
0.00552427 & 0.00506241          & 1.29418                 &  1.48302 & 0.490671 & 4.86258 & 0.99064 \\ \hline
0.00276214 & 0.00215124          & 1.23465                 &  1.06876 & 0.472599 & 2.23702 & 1.12014 \\ \hline
0.00138107 & 0.000962264         & 1.16067                 &  0.76414 & 0.48396  &         & \\ \hline
\end{tabular}
}
\caption{Expected order of convergence for the isometry error in $L^1$, the discrete Gauss curvature $\kappa_h$ in $L^1$ and the second derivative of $u_h$ in $L^2$.}
\label{fig:EOCHomMaterialForceLR}
\end{figure}

In Fig. \ref{fig:EOCHomMaterialForceLR} we take into account a sequence of successively refined measures with uniform grid sizes $h \in \{ 0.0441942, 0.0220971, 0.0110485, 0.00552427, 0.00276214, 0.00138107 =: h_{\text{min}} \}$ 
to uniformly discretize the reference configuration $S = [0,1]^2$.
Then we compute the expected order of convergence (EOC) for the isometry error in $L^1$ and the second derivative of the deformation in $L^2$ consider the finest mesh as ground truth.
From \cite{Ba11} it is proven that both converge linearly in $h$, which is verified by our result.

To identify numerically the affine region, we compute for each triangle element the variance of the normal vector and apply a threshold with $10^{-9}$.
Furthermore, we plot the image of the Gauss map and observe as expected 
two nearly one dimensional arms meeting at the normal of the approximately affine region on the sphere. 
We refer to \cite{Po73} for the analytical result that the deformed configuration is developable if and only if the image of the Gauss map is singular.
Here for each vertex we plot a small dot on the sphere indicating the discrete normal.

Finally, we observe an EOC for the discrete Gauss curvature 
$\kappa_h = \det( \nabla \nabla_h u_h \cdot n_h)$ 
of approximately $\frac{1}{2}$  in $L^1$. This low order of convergence in a weak norm
appears to be too weak to prevent curvature singularities to develop on already fairly fine meshes.
In fact on a logarithmic scale we see singularities at the corners of the clamped boundary and in the middle of the opposite boundary. This singular behaviour might deteriorate the analytically expected
cylinderical solution structure. 
For a affine lateral boundary, in our numerical simulations, we obtain deformations
which are not cylindrical as shown in Fig.~\ref{fig:CompareStiffRightBoundary}.
Here, we use a layer of hard material orthogonal to the clamped boundary, which we have already considered in Figure~\ref{fig:CompareDesignsIsometry} (Design II).
We compute solutions of the state equation for different forces and plot the Gauss curvature.
As in Figure~\ref{fig:EOCHomMaterialForceLR}, we observe an EOC significantly less than $1$.
\begin{figure}[!htbp]
\resizebox{\textwidth}{!}{
\begin{tabular}{  m{1.6cm} c c c c c }
$u_h(S)$
  & \begin{minipage}{0.2\textwidth} \centering 
     \includegraphics[max width=0.9\linewidth, max height=0.9\linewidth]{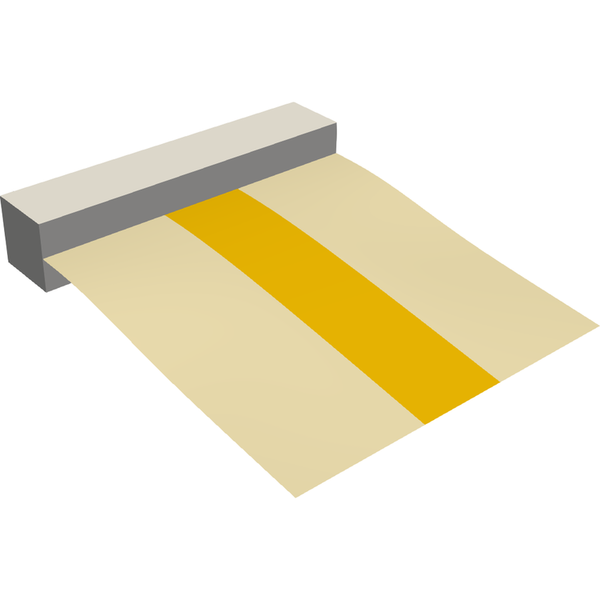}
    \end{minipage}
  & \begin{minipage}{0.2\textwidth} \centering 
     \includegraphics[max width=0.9\linewidth, max height=0.9\linewidth]{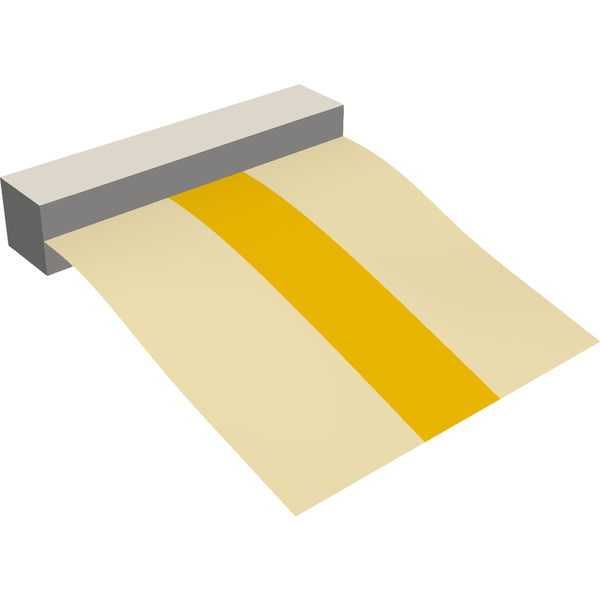}
    \end{minipage}
  & \begin{minipage}{0.2\textwidth} \centering 
     \includegraphics[max width=0.9\linewidth, max height=0.9\linewidth]{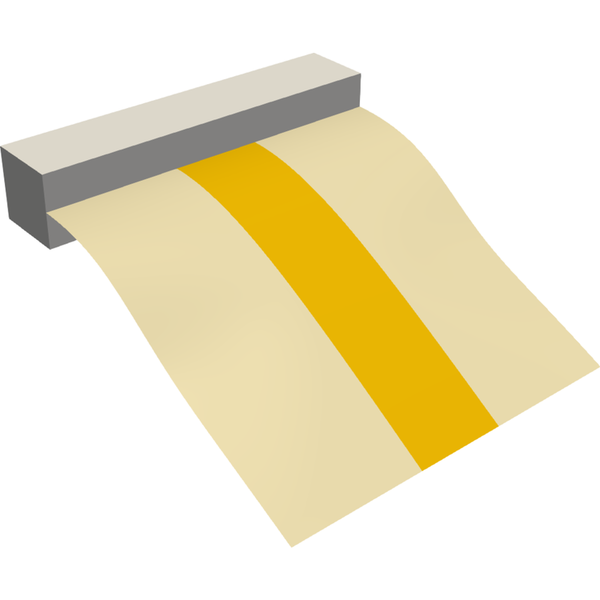}
    \end{minipage}
  & \begin{minipage}{0.2\textwidth} \centering 
     \includegraphics[max width=0.9\linewidth, max height=0.9\linewidth]{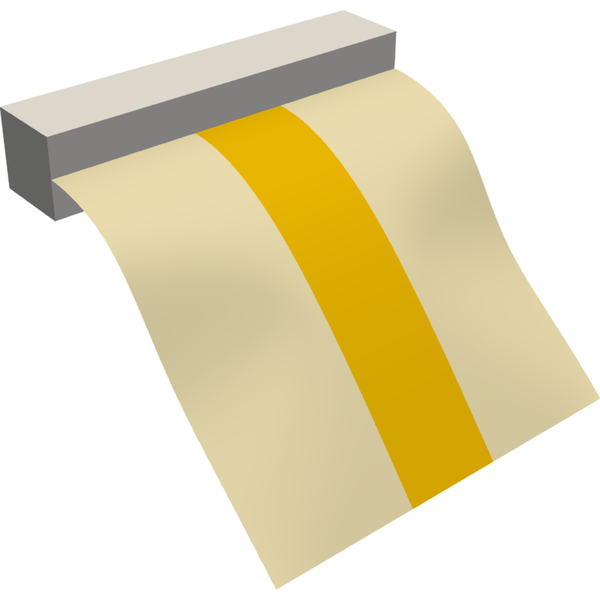}
    \end{minipage}
  & \\
    $\kappa_h$
  & \begin{minipage}{0.2\textwidth} \centering 
     \includegraphics[max width=0.9\linewidth, max height=0.9\linewidth]{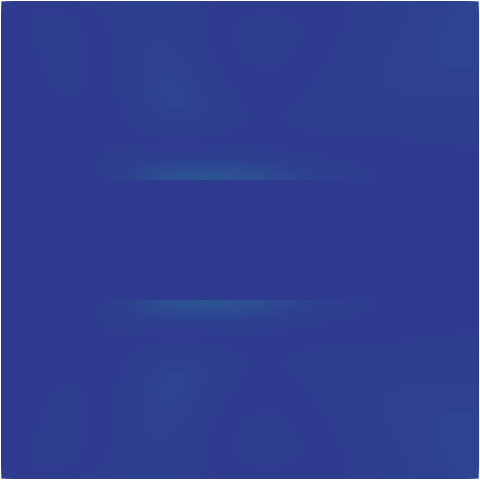}
    \end{minipage}
  & \begin{minipage}{0.2\textwidth} \centering 
     \includegraphics[max width=0.9\linewidth, max height=0.9\linewidth]{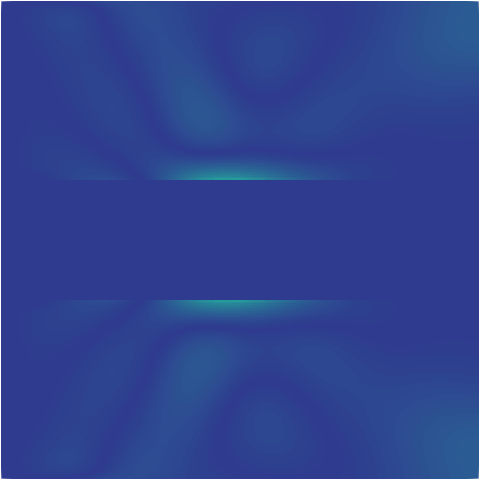}
    \end{minipage}
  & \begin{minipage}{0.2\textwidth} \centering 
     \includegraphics[max width=0.9\linewidth, max height=0.9\linewidth]{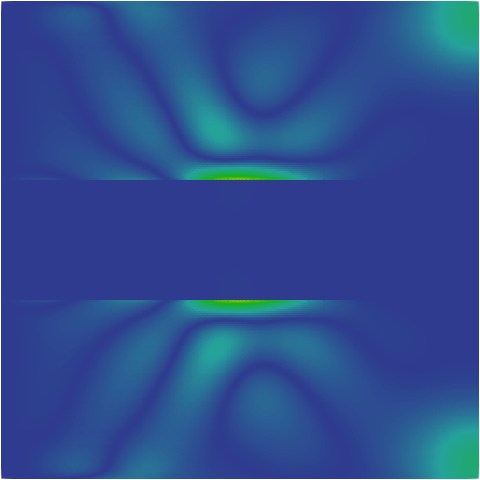}
    \end{minipage}
  & \begin{minipage}{0.2\textwidth} \centering 
     \includegraphics[max width=0.9\linewidth, max height=0.9\linewidth]{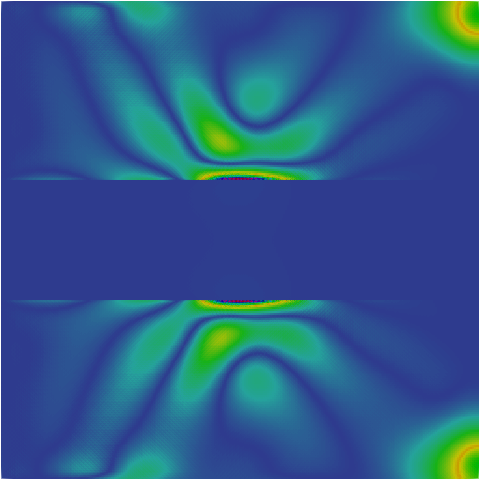}
    \end{minipage}
  & \begin{minipage}{0.1\textwidth}
       \begin{tikzpicture}
        \node at (0, 0){\includegraphics[width=0.1\textwidth,height=1.5\textwidth]{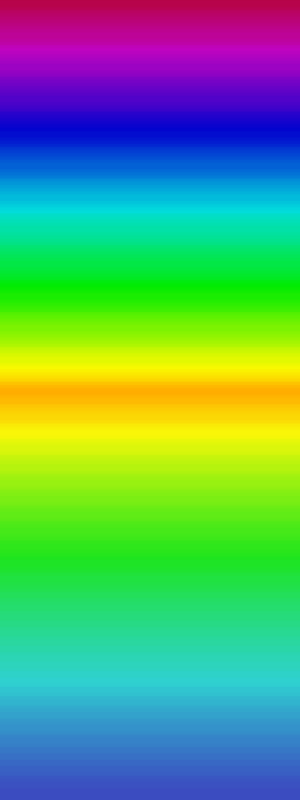}};
        \node at (0.5,1.15) {\tiny $2.3$};
        \node at (0.5,-0.15) {\tiny $1$};
        \node at (0.5,-1.15) {\tiny $0$};
     \end{tikzpicture}
    \end{minipage}
  \\
\end{tabular}
}

\vspace*{0.3cm}

\resizebox{\textwidth}{!}{
\begin{tabular}{ | c | c | c | c | c | c | c | }
\hline
$h$        & $|\D u^T \D u - \Id |_{L^1}$ & EOC                     & $|\kappa_h |_{L^1}$ & EOC & $| \nabla \nabla_h (u_h - u_{h_{\text{min}}})|_{L^2}$ & EOC \\ \hline
0.0441942  & 0.0273584           &          & 0.792172                                        &          & 8.06378                                              & \\ \hline
0.0220971  & 0.0145798           & 0.908017 & 0.553654                                        & 0.516828 & 3.99447                                              & 1.01345 \\ \hline
0.0110485  & 0.00560266          & 1.37978  & 0.328346                                        & 0.753768 & 1.70073                                              & 1.23185 \\ \hline
0.00552427 & 0.00209298          & 1.42055  & 0.207958                                        & 0.658927 & 0.727473                                             & 1.22519 \\ \hline
0.00276214 & 0.000783029         & 1.41843  & 0.131085                                        & 0.665787 & 0.291338                                             & 1.3202 \\ \hline
0.00138107 & 0.000280999         & 1.4785   & 0.079705                                        & 0.717759 &                                                      & \\ \hline
\end{tabular}
}
 \caption{Top: Deformations for Design II with stiff right boundary for different forces ($|f| = 2,4,8,16$) and Gauss curvature.
 Bottom: Expected order of convergence of $|f| = 16$.}
 \label{fig:CompareStiffRightBoundary}
\end{figure}

\section{Optimal design problem}\label{sec:Disenyo}

In this section we seek optimal designs, i.e., material distributions $B$
which optimise the compliance functional. We are only interested in gravitational
force, i.e., the force $f = -e_3$.
\\
Moreover, in the analytical results, we will seek optimal designs $B$ among those the first moment
$\int x_2B(x_1, x_2)\ dx_2$ is identically zero. This restriction
seems justified in view of the symmetry of the force and the boundary conditions.
In particular, it is satisfied by the optimal designs found numerically,
cf. Figure \ref{fig:OptDesignsIsometryF}.
\\
Let $c_l > 0$. For given $B\in L^{\infty}(S; [a, b])$ 
we define $\theta_B : S\to [0, 1]$ via
\begin{equation}
\theta_B = \frac{B - a}{b-a}
\end{equation}
and we denote by $K[B]$ the unique global minimiser
of the functional $\Efree_{\text{phase}}(B,\cdot)$ within the space $W^{1,2}_l(0, \ell)$.
As already mentioned earlier (cf. also \cite{HoRuSi16}) this function $K[B]$
is the unique solution of \eqref{ELK-1}, which for the present force reads
\begin{equation}\label{state}
(\o BK[B]')' = (1 - t)\cos K[B],
\end{equation}
and which satisfies \eqref{statecon}.
\\
The cylindrical deformation
$u_B\in\SetDeformations_S$ with phase $K[B]$ is the unique global 
minimiser of $\Efree[B,\cdot]$ within the class of cylindrical deformations in 
$\SetDeformations_S$. By Corollary \ref{excyl} the deformation $u_B$ is
a stationary point of $\Efree[B,\cdot]$ in $\SetDeformations_S$.
\\
We seek to minimise the compliance
\begin{equation}
\label{compli-1}
B\mapsto -\int_S u_B\cdot e_3 + c_l\int_S\theta_B
\end{equation}
among all $B\in L^{\infty}(S; \{a, b\})$. 
\\
As a side remark, note that, in view of the results in Section \ref{sec:Developability},
instead of imposing the a priori condition \eqref{moments} on
the admissible designs $B$, we could restrict ourselves to symmetric deformations $u$ which satisfy the additional
condition \eqref{derecha}. Then Corollary \ref{cor1} implies that in fact we are restricting 
ourselves to cylindrical deformations -- and therefore the $x_2$-dependence of $B$ becomes irrelevant (since $u$
is cylindrical only $\o B$ plays a role) and therefore we could as well have assumed 
from the outset that \eqref{moments} is satisfied, by symmetrising $B$.

In order to simplify the notation, from now on we normalise the length instead of the width $w > 0$ of $S$, i.e.,
we assume that $S = (0, 1) \times I$, where $I = (-w/2, w/2)$. 
In view of the previous considerations, from now on only cylindrical deformations $u : S\to\R^3$ will be considered.
and in what follows we will frequently write $t$ instead of $x_1$.
\\
In terms of the phase $K[B]$ the right-hand side of \eqref{compli-1} equals
$$
B\mapsto -\int_0^1 (1-t)\sin K[B](t)\ dt + c_l\int_0^1\theta_B
=
-\int_0^1 (1-t)\sin K[\o B](t)\ dt + c_l\int_0^1\theta_{\o B}.
$$
Observe that the `physical' design $B : S \to\R$ can only take the values $a$ or $b$.
However, the corresponding one-dimensional design $\o B : (0, 1)\to\R$ can take all
values in $[a, b]$.
\\
Clearly the map $B\mapsto \o B$ is highly non-injective and therefore the optimal two-dimensional
design $B$ will be non-unique: all designs $B$ with the same average $\o B$ have the
same compliance if we define $u_B$ as above.
\\
In view of the above, for $K\in W^{1,2}_l(0, 1)$ and $\theta\in L^{\infty}(0, 1)$
let us define the cost functional
$$
\costFct[K, \theta] = -\int_0^1 (1-t)\sin K + c_l \int_0^1 \theta
$$
consisting of the compliance cost and a cost for hard material.
\medskip 

There is fundamental difference between Beams studied in \cite{HoRuSi16} 
and cylindrical plates. In fact, the admissible asymptotic designs for one dimensional beams
are those arising from designs $B : (0, 1)\to\R$ of the form
$$
B(t) = (1 - \chi(t))a + \chi(t)b,
$$
where $\chi$ only takes values in $\{0, 1\}$.
The possible asymptotic designs $B^*$ for beams arise
as harmonic weak-$*$ limits of sequences of such $B_n$, i.e.,
$$
\frac{1}{B_n}\weaks\frac{1}{B^*}\mbox{ in }L^{\infty}(0,1).
$$
Since $\chi_n\in\{0,1\}$ pointwise, for $B_n = (1-\chi_n)a + \chi_n b$ we have
$$
\frac{1}{B_n} = \frac{1-\chi_n}{a} + \frac{\chi_n}{b}.
$$
Therefore, denoting by $\theta$ the weak-$*$ limit of $\chi_n$
one obtains
$$
\frac{1}{B^*} = \frac{1-\theta}{a} + \frac{\theta}{b}.
$$
Observe that clearly $\int_I \chi_n\to\int_I\theta$.
\\
The two-dimensional situation addressed here
differs from this one-dimensional in that the
designs $B_n$ must be replaced by their averages $\o B_n$.
And each $\o B_n$ can take values in the whole interval $[a, b]$.
Therefore, the Young measure generated by $\o B_n$ is supported on $[a, b]$,
whereas in \cite{HoRuSi16} the Young measure generated by $B_n$ was supported on $\{a, b\}$.
\\
Since apart from this difference the problem studied here is one dimensional
as well, we will henceforth write $B$ instead of $\o B$ to simplify the notation.
So $B : (0, 1)\to [a, b]$ can indeed take values between $a$ and $b$ as well and study this relaxed problem.

For $\theta\in L^{\infty}((0,1); [0, 1])$ we define
$B_{\theta} = (1 - \theta)a + \theta b$
and consider
$$
\h \costFct[\theta] =  \costFct[ K[B_{\theta}], \theta].
$$
A function $\theta\in L^{\infty}((0, 1), [0, 1])$ will be called an optimal
design if it is a minimiser of $\h \costFct$. 
The following existence result for optimal designs hold.
\begin{theorem}[existence of optimal designs]\label{existence}
The functional $\h \costFct : L^{\infty}((0,1); [0, 1])\to\R$ attains its minimum.
\end{theorem}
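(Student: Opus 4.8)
The plan is to apply the direct method of the calculus of variations. The admissible set $L^{\infty}((0,1);[0,1])$ is weak-$*$ compact (being a bounded, convex, weak-$*$ closed subset of $L^{\infty}$), so I take a minimising sequence $\theta_n$ for $\h\costFct$ and, after passing to a subsequence, obtain a weak-$*$ limit $\theta_* \in L^{\infty}((0,1);[0,1])$. The entire difficulty is concentrated in the nonlinear dependence of $K[B_\theta]$ on $\theta$: the cost splits as $\costFct[K[B_\theta],\theta] = -\int_0^1 (1-t)\sin K[B_\theta](t)\,dt + c_l\int_0^1\theta$, and the second term is weak-$*$ continuous in $\theta$, so the crux is to understand how $K[B_{\theta_n}]$ behaves as $\theta_n\weaks\theta_*$.

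The key step is to establish a compactness and continuity property for the solution operator $\theta\mapsto K[B_\theta]$. First I would derive uniform $W^{1,2}_l(0,1)$ bounds on $K_n := K[B_{\theta_n}]$ that are independent of $n$. This follows by testing the state equation \eqref{state}, i.e.\ $(\o B_{\theta_n} K_n')' = (1-t)\cos K_n$, against $K_n$ itself: using $\o B_{\theta_n}\geq a>0$ and the fact that $K_n$ satisfies \eqref{statecon} (hence is uniformly bounded in $L^\infty$ by $\pi$), the left-hand side controls $\int_0^1 K_n'^2$ from below while the right-hand side is bounded, giving a uniform bound on $\|K_n'\|_{L^2}$ and thus on $\|K_n\|_{W^{1,2}}$. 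By Rellich, a subsequence converges strongly in $C([0,1])$ and weakly in $W^{1,2}$ to some $K_*$. The main obstacle is then to identify $K_*$ as $K[B_{\theta_*}]$: one must pass to the limit in the nonlinear state equation. Here the subtlety is that the coefficient $\o B_{\theta_n}$ converges only weakly-$*$, and products of weakly converging sequences do not in general converge to the product of the limits.

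To overcome this I would reformulate the second-order equation as the first-order system $K_n' = k_n/\o B_{\theta_n}$, $-k_n' = (1-t)\cos K_n$, where $k_n = \o B_{\theta_n}K_n'$ is the flux. From the bounds above, $k_n$ is bounded in $W^{1,2}$ (since $k_n' = -(1-t)\cos K_n$ is bounded in $L^2$ and $k_n(1)=0$ by the natural boundary condition), so $k_n\to k_*$ strongly in $C([0,1])$. Now $K_n' = k_n/\o B_{\theta_n}$, and since $\o B_{\theta_n} = a + (b-a)\theta_n \weaks a+(b-a)\theta_* = \o B_{\theta_*}$ in $L^\infty$ with $\o B_{\theta_n}$ uniformly bounded away from $0$, I can pass to the limit: the product of the strongly convergent $k_n$ with the weak-$*$ convergent $1/\o B_{\theta_n}$ (noting $1/\o B_{\theta_n}\weaks$ a limit that lies in the admissible range, by convexity of $s\mapsto 1/s$ and the harmonic-mean structure discussed in the text) converges, and together with $k_*' = -(1-t)\cos K_*$ this shows $K_*$ solves the state equation with a valid effective coefficient. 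Crucially $K_*$ still satisfies \eqref{statecon}, which by the uniqueness characterization recorded after \eqref{statecon} pins down $K_*$ as the genuine minimiser for the limiting design.

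Finally, lower semicontinuity of the full functional follows from continuity of the compliance term and weak-$*$ continuity of the linear term: since $K_n\to K_*$ strongly in $C([0,1])$, the compliance $-\int_0^1(1-t)\sin K_n\to -\int_0^1(1-t)\sin K_*$ converges (not merely lower-semicontinuous, but continuous), while $c_l\int_0^1\theta_n\to c_l\int_0^1\theta_*$ by weak-$*$ convergence. Hence $\h\costFct[\theta_*]\leq\liminf_n\h\costFct[\theta_n] = \inf\h\costFct$, so $\theta_*$ is a minimiser. The one point demanding care is whether the harmonic-type weak limit of $1/\o B_{\theta_n}$ is again of the form $1/\o B_\sigma$ for some admissible $\sigma\in[0,1]$; since the relaxed design space already admits all values in $[a,b]$, this closure property holds and no genuine relaxation gap appears, which is precisely why the infimum is attained within the stated class.
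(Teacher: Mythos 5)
Your compactness argument (uniform $W^{1,2}$ bounds, strong convergence of the flux $k_n = \o B_{\theta_n}K_n'$, and passage to the limit in the first-order system) is correct and is essentially the mechanism behind the paper's citation of the beam case. But the final identification step contains a genuine error. What your limit passage actually yields is that $K_*$ solves the state equation with coefficient $B^*$ defined by the \emph{harmonic} weak-$*$ limit $1/\o B_{\theta_n}\weaks 1/B^*$, i.e.\ $K_* = K[B^*]$. This $B^*$ is in general \emph{not} equal to $B_{\theta_*} = a + (b-a)\theta_*$, the coefficient built from the arithmetic weak-$*$ limit $\theta_*$ of $\theta_n$; by Jensen's inequality one only has $B^*\leq B_{\theta_*}$, with equality essentially only under strong convergence. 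Consequently, the limit of the costs is $-\int_0^1(1-t)\sin K[B^*] + c_l\int_0^1\theta_*$, whereas $\h\costFct[\theta_*]$ involves the different state $K[B_{\theta_*}]$. Your concluding inequality $\h\costFct[\theta_*]\leq\liminf_n\h\costFct[\theta_n]$ therefore does not follow from what you proved: it would require either $K[B_{\theta_*}] = K_*$ (false in general) or a monotonicity of the compliance in $B$ (plausible, but nowhere established, and not a triviality for this nonlinear state equation).

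The paper repairs exactly this point by choosing a different minimiser candidate: it defines $\theta^*$ through the harmonic limit, $B_{\theta^*} = B^*$, so that the compliance term in $\costFct[K[B^*],\theta^*]$ matches the limit of the compliances by construction. What then remains -- and this is the step missing from your proposal -- is to compare the volume terms, i.e.\ to show $\int_0^1\theta^*\leq\int_0^1\theta_* = \lim_n\int_0^1\theta_n$. This follows from weak-$*$ lower semicontinuity of the convex integral functional $v\mapsto\int_0^1\Phi(v)$ with $\Phi(z)=z^{-1}$, applied to $1/B_n\weaks 1/B^*$:
\begin{equation}
\int_0^1 B^* = \int_0^1\Phi\left(\frac{1}{B^*}\right)\leq\liminf_{n\to\infty}\int_0^1\Phi\left(\frac{1}{B_n}\right) = \liminf_{n\to\infty}\int_0^1 B_n = \int_0^1 B_{\theta_*}.
\end{equation}
You invoke the convexity of $s\mapsto 1/s$ only to argue that $B^*$ takes values in $[a,b]$ (which already follows from the pointwise bounds $a\leq B_n\leq b$), and your closing claim that ``no genuine relaxation gap appears'' glosses over precisely the discrepancy between $B^*$ and $B_{\theta_*}$ that this inequality is needed to control. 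With the candidate replaced by $\theta^*$ and the volume comparison added, your argument closes and coincides with the paper's proof.
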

\begin{proof}
Let $(\theta_n)$ be a minimising sequence and set $B_n = B_{\theta_n}$
and $K_n = K[B_n]$. As in \cite{HoRuSi16} we see that $K_n$ converges to $K[B^*]$
weakly in $W^{1,2}$, where
$B^*$ is defined by
$$
\frac{1}{B_n}\weaks\frac{1}{B^*}\mbox{ in }L^{\infty}(0, 1).
$$
Here we have passed to a subsequence, which
we do not relabel. Notice that
\begin{equation}
\label{relax-0}
\int_0^1\theta_n\to\int_0^1\theta\mbox{ as }n\to\infty.
\end{equation}
Hence
\begin{equation}
\label{relax-2}
\inf\h \costFct = \costFct[K[B^*], \theta].
\end{equation}
There exists a unique $\theta^*\in L^{\infty}((0, 1); [0, 1])$ with $B_{\theta^*} = B^*$.
We claim that 
\begin{equation}
\label{relax-1}
\int_0^1\theta^*\leq\int_0^1\theta.
\end{equation}
To prove \eqref{relax-1} we note that $\Phi(z) = z^{-1}$ is convex on $(0, \infty)$. Hence
by weak lower semicontinuity and by \eqref{relax-0} and recalling the
definition of $\theta^*$, we see that
\begin{align*}
\int_0^1(a + \theta^*(b-a)) &=
\int_0^1 B^* = \int_0^1 \Phi\left( \frac{1}{B^*} \right)
\\
&\leq\liminf_{n\to\infty}\int_0^1\Phi\left( \frac{1}{B_n} \right) = \liminf_{n\to\infty}\int_0^1 B_n
= \int_0^1(a + \theta (b - a)).
\end{align*}
Hence \eqref{relax-1} follows.
Now we deduce from \eqref{relax-2} that
$$
\inf\h \costFct \geq \costFct[K[B^*], \theta^*] = \h \costFct[\theta^*].
$$
So $\theta^*$ is the sought-for minimiser.
\end{proof}
\section{Properties of the optimal design} \label{sec:properties}
To further study the optimal design, we will compute the derivative of the cost functional $\h\costFct$ with respect to design $\theta$.
To this end, we first define as usual the unique solution $P\in W^{1,2}_l(0,1)$ of the adjoint equation
\begin{equation}
\label{adjoint}
(B_{\theta}P')' = (1-t)\cos K - (1-t)P\sin K.
\end{equation}
 in the dual space of $W^{1,2}_l(0, 1)$.
As before, this includes the weak formulation of the boundary
conditions $P(0) = P'(1) = 0$.

As in \cite{HoRuSi16} we introduce $p = B_{\theta}P'$ and write \eqref{adjoint} as
\begin{equation}
\label{p'}
p' = -(1-t)(\sin K)(P - \cot K).
\end{equation}
The right hand side is well-defined because we know that $K(t) \in (-\tfrac{\pi}{2},0]$ and $K(t)=0$ if and only if $t=0$.
The following lemma extends \cite[Proposition 5.5]{HoRuSi16}.
\begin{lemma}\label{lemon2}
The adjoint variable $P$ satisfies $P < 0$ on $(0, 1)$ and there exists $\tau\in (0, 1]$
such that $p' > 0$ on $(0, \tau)$ and $p'< 0$ on $(\tau, 1)$.
We have $\tau = 1$ if and only if $P(1)\geq\cot K(1)$.
In this case $(\tau, 1) = \emptyset$ and so $p > 0$ on all of $[0, 1)$.
\\
If $\tau < 1$ then there exists $\tau_0\in (0, \tau)$
such that $p < 0$ on $[0, \tau_0)$ and $p > 0$ on $(\tau_0, 1)$.
\end{lemma}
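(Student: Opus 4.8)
The plan is to work throughout with the $C^1$ functions $k:=B_\theta K'$ and $p=B_\theta P'$, so as never to differentiate the merely $L^\infty$ coefficient $B_\theta$. First I would record the sign information on $K$. From the state equation \eqref{state} one has $k'=(1-t)\cos K$; since $K\in(-\tfrac{\pi}{2},0)$ on $(0,1)$ this gives $k'>0$, and together with $k(1)=0$ (the condition $K'(1)=0$) it yields $k<0$, hence $K'<0$, on $(0,1)$. Consequently $\sin K<0<\cos K$, the function $\cot K$ is negative and strictly increasing on $(0,1)$, and $\cot K(t)\to-\infty$ as $t\to0^+$. I would then rewrite \eqref{p'} as $p'=g\,(P-\cot K)$ with $g:=-(1-t)\sin K>0$ on $(0,1)$, so that the sign of $p'$ always equals the sign of $h:=P-\cot K$.

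To prove $P<0$ I would use a maximum principle phrased via $p$. Suppose $P$ attained a nonnegative value at an interior maximum $t_0$. Then $P'$ changes from $\ge 0$ to $\le 0$, so $p=B_\theta P'$ satisfies $p(t_0)=0$ and $p'(t_0)\le0$; but $P(t_0)\ge0>\cot K(t_0)$ gives $h(t_0)>0$ and hence $p'(t_0)=g(t_0)h(t_0)>0$, a contradiction. The boundary case $t_0=1$ is excluded similarly: there $p(1)=0$, and $h(1)>0$ would force $p<0$ just left of $1$, contradicting that $P$ should increase up to a right-endpoint maximum. Since $P(0)=0$, this shows $P<0$ on $(0,1]$.

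Next I would read off the behaviour of $h=P-\cot K$ at the endpoints: since $P(0)=0$ is finite while $\cot K\to-\infty$, one has $h\to+\infty$ as $t\to0^+$ (so $p'>0$ near $0$), and since $p(1)=0$ the sign of $p$ near $1$ is opposite to the sign of $h(1)=P(1)-\cot K(1)$, which is exactly the alternative in the statement. The heart of the matter is to show that $\{h>0\}$ is a single interval $(0,\tau)$. For this I would use the identity, obtained by a direct computation from \eqref{adjoint} and $(\cot K)'=-K'/\sin^2 K$ (with $L\phi:=(B_\theta\phi')'+(1-t)\sin K\,\phi$ so that $LP=(1-t)\cos K$),
\[
(B_\theta h')'+(1-t)\sin K\,h=\Big(\tfrac{k}{\sin^2 K}\Big)',
\]
whose right-hand side is strictly positive by the sign information on $k$ and $K$. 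Equivalently, $h$ is a strict subsolution of the coercive operator $\mathcal M\phi=-(B_\theta\phi')'+g\,\phi$ with $g>0$. Combining this with the endpoint data and an analysis of $\psi:=B_\theta h'=p+k/\sin^2K$ at the zeros of $h$ (at such a zero $\psi'=(k/\sin^2K)'>0$, which controls how $h$ may cross), I would show that $h$ changes sign at most once and only downward, giving a unique $\tau\in(0,1]$ with $h>0$ on $(0,\tau)$ and $h<0$ on $(\tau,1)$; in particular $\tau=1$ exactly when $h(1)\ge0$, i.e. $P(1)\ge\cot K(1)$. This single-crossing claim is the main obstacle: the subsolution property forbids a nonnegative interior maximum of $h$, but by itself it does not exclude a negative dip followed by a return to positive values when $h(1)>0$. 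Ruling this out requires the maximum principle applied on the subinterval to the right of the first zero, together with control of the sign of $h(1)$, in the spirit of \cite[Proposition 5.5]{HoRuSi16}.

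Finally I would deduce the sign of $p$ from the now-established monotonicity of $p$ (increasing on $(0,\tau)$, decreasing on $(\tau,1)$), together with $p(1)=0$ and $P<0$. If $\tau<1$, then $p$ decreases to $0$ on $(\tau,1)$, so $p>0$ there; moreover $p(0)<0$, since otherwise $p\ge0$ on all of $(0,1)$ would give $P'=p/B_\theta\ge0$ and hence $P\ge0$ by $P(0)=0$, contradicting $P<0$. Thus $p$ increases from $p(0)<0$ to $p(\tau)>0$, vanishing at a single $\tau_0\in(0,\tau)$ with $p<0$ on $[0,\tau_0)$ and $p>0$ on $(\tau_0,1)$, as claimed. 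In the degenerate case $\tau=1$ one has $(\tau,1)=\emptyset$ and $p$ is strictly monotone with $p(1)=0$; comparison with $P<0$ then shows $p$ keeps a single sign on $[0,1)$ and no interior zero $\tau_0$ occurs (the sign coming out negative, which I would flag when matching the statement).
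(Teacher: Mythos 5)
Your route is essentially the paper's: the same key identity $(B_\theta h')' + (1-t)\sin K\, h = (k/\sin^2 K)'$ with strictly positive right-hand side, so that $h := P-\cot K$ is a strict subsolution of $-(B_\theta(\cdot)')' - (1-t)\sin K\,(\cdot)$; the same reduction of the lemma to a single-crossing property of $h$ (sign of $p'$ equals sign of $h$ by \eqref{p'}); and the same final intermediate-value argument producing $\tau_0$ from $p(0)<0$ and the monotonicity of $p$. Your first-order argument for $P<0$ (at a nonnegative maximum $t_0$ one has $p(t_0)=0$ while $h(t_0)>0$ forces $p'(t_0)=g(t_0)h(t_0)>0$, a contradiction) is a sound elementary substitute for the paper's maximum-principle-plus-reflection proof of that part. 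You are also right to flag the sign in the degenerate case: if $\tau=1$ then $p'>0$ on $(0,1)$ and $p(1)=0$ force $p<0$ on $[0,1)$, so the statement's claim ``$p>0$ on all of $[0,1)$'' is an error in the paper; what is actually used later (in the proof of Theorem~\ref{optimal} and the remark following it) is that $kp>0$ and is strictly decreasing, which is exactly what $p<0$, $k<0$ give.

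However, your proof is genuinely incomplete at precisely the step you call ``the main obstacle'', and the missing idea is concrete: even reflection of the whole problem across $t=1$. Because $p(1)=0$ and $k(1)=0$ while $\sin K(1)\neq 0$, the flux $B_\theta h' = p + k/\sin^2 K$ vanishes at $t=1$; hence the even extensions of $B_\theta$, $K$, $P$ (and so of $h$) to $(0,2)$ satisfy the same strict differential inequality weakly across $t=1$ -- this is the device the paper invokes repeatedly (``via extension to $(0,2)$''). Taking $\tau$ to be the first zero of $h$, one has $h(\tau)=h(2-\tau)=0$ and $-(B_\theta h')' - (1-t)\sin K\, h<0$ on $(\tau,2-\tau)$, so the weak maximum principle gives $h\le 0$ there and the strong maximum principle gives $h<0$ on the open interval $(\tau,2-\tau)$, which contains $t=1$. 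This single stroke (i) rules out your ``negative dip followed by a return to positive values'' scenario, since any return would occur inside $(\tau,2-\tau)$ where $h<0$, and (ii) proves $h(1)<0$ whenever $\tau<1$, which is the nontrivial direction of the equivalence $\tau=1\Leftrightarrow P(1)\ge\cot K(1)$. In particular, no separate ``control of the sign of $h(1)$'' is needed, contrary to what your sketch suggests: the sign of $h(1)$ is an output of the reflection argument, not an input. Without this step, or an equivalent way of converting the natural boundary condition at $t=1$ into usable boundary data for the maximum principle, the single-crossing claim -- and with it the lemma -- remains unproved in your write-up.
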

\begin{proof}
Since $K\in (-\frac{\pi}{2}, 0]$ on $[0, 1)$
we have $\sin K \leq 0$.
Hence the adjoint equation \eqref{adjoint} for $P$ is
$$
-(B_{\theta}P')' + (1 - t) |\sin K| P = -(1-t)\cos K.
$$
The right-hand side is negative and the zeroth order coefficient is positive. Hence the 
strong maximum principle (using once again the extension approach onto the interval $[0,2]$) 
implies that $P$ does not attain a
nonnegative local maximum in $(0, 2)$. Since $P(0) =P(2) = 0$,
we conclude that indeed $P < 0$ on $(0, 2)$.
For convenience we include the following argument from \cite{HoRuSi16}.
Set 
$$
\tau = \inf\{t\in (0, 1) : P(t) \leq \cot K(t)\},
$$
with $\tau = 1$ if the set on the right-hand side is empty.
\\
Let us assume that it is nonempty. Then we have $\tau\in (0, 1)$;
indeed $\tau > 0$ because
$P(0) = 0$ while $\cot K(0) = -\infty$.
Hence by continuity $P(\tau) = \cot K(\tau)$.
\\
By definition of $\tau$ we have $P > \cot K$ on $(0, \tau)$.
Hence by \eqref{p'} we have $p' > 0$ on $(0, \tau)$.
\\
Now, $(\cot K)' = - \tfrac{K'}{\sin ^2 K}$ implies 
\begin{align*}
- \left( B_{\theta}\left( P -\cot K \right)' \right)'  
& =  (1-t) \sin K (P-\cot K) - \left( \frac{B_{\theta}K'}{\sin^2 K}\right)' \\
& =   (1-t) \sin K (P-\cot K)  - (1-t) \frac{\cos K}{\sin^2 K} + \frac{2 B_{\theta} (K')^2 \cos K}{\sin^3 K}
\end{align*}
Taking into account that $K\in (-\tfrac{\pi}{2}, 0)$ on $(0,1)$ we observe that 
$P - \cot K$ satisfies the differential inequality
$$
-\left( B_{\theta}\left( P -\cot K \right)' \right)' -  (1-t) \sin K (P-\cot K) < 0
$$
on $(0,1)$. And $(1-t)\sin K < 0$ on $(0,1)$. 
An application of the strong maximum principle,
again via extension to $(0,2)$, implies that $P < \cot K$ on $(\tau, 2-\tau)$,
hence on $(\tau, 1)$. Thus $p' < 0$ on $(\tau, 1)$ by \eqref{p'}. 
In particular, since $p(1) = 0$, we have $p > 0$ on $[\tau, 1)$.
\\
On the other hand, by definition of $\tau$ we have $p' > 0$ on $(0, \tau)$.
If $p(0) \geq 0$ then $p > 0$ on $(0, \tau)$, which in turn would imply
that $P > 0$ on $(0, \tau)$, contradicting our earlier observation that $P$ is negative.
Therefore, $p(0) < 0$.
\\
The existence of $\tau_0\in (0, \tau)$ with
the claimed properties now follows from the intermediate value theorem
and from the strict monotonicity of $p$ on $(0, \tau)$ and on $(\tau, 1)$.
\end{proof}

Next, we compute the derivative of the cost functional $\h \costFct[\theta]$.
For given $\beta\in L^{\infty}(0, 1)$ denote by $\FDK = \partial K[B_\theta] (\beta)$
the Fr\'echet derivative of $B \mapsto K[B]$ at the point $B_{\theta}$ in direction $\beta$.
Considering the variation of the state equation with respect to $\theta$ we find
\begin{equation}
\label{Kdot}
(B_{\theta}\FDK')' + (1-t)\sin K \, \FDK = - (\beta K')'.
\end{equation}
Testing \eqref{Kdot} with the adjoint variable $P$ we find
\begin{equation}
\label{deriva-1}
\int_0^1 B_{\theta}P'\FDK' - (1-t)P \FDK\sin K = - \int_0^1 \beta P'K'.
\end{equation} 
On the other hand, testing the adjoint equation \eqref{adjoint} with $\FDK$ we get
\begin{equation}
\label{deriva-2}
\int_0^1 B_{\theta}P'\FDK' - (1-t)(\sin K)\FDK P = - \int_0^1 (1-t)\FDK\cos K.
\end{equation}
Comparing \eqref{deriva-1} and \eqref{deriva-2} we see that
$$
\int_0^1 (1-t)\FDK\cos K =  \int_0^1 \beta P'K'.
$$
We can use these computations to compute the derivative of $\h \costFct$ with respect to $\theta$.
We observe that the derivative of $\theta\mapsto B_{\theta}$
at the point $\theta$ is clearly $\partial B_{\theta} = b - a$ (continuing to denote Fr\'echet derivatives by a $\partial$) 
and conclude 
\begin{align*}
\partial \h \costFct[\theta](\beta) &=
- \int_0^1 (1-t)\FDK \cos K  + c_l\int_0^1\beta = - \int_0^1 \beta P'K'  + c_l\int_0^1\beta.
\end{align*}
For more details in the above computations we refer to \cite{HoRuSi16}.
Summarising,
\begin{equation}
\label{deriva-3}
\partial \h \costFct[\theta] = - K'P' + c_l.
\end{equation}

\begin{proposition}
If $\theta$ is an optimal design, then
\begin{equation}
\label{el-1}
B_{\theta}^2 K'P'
\begin{cases}
\leq c_l a^2&\mbox{ on }\{\theta = 0\}
\\
\geq  c_l b^2 &\mbox{ on }\{\theta = 1\}
\\
= c_l B^2_{\theta}&\mbox{ on }\{\theta\in (0, 1)\}.
\end{cases}
\end{equation}
\end{proposition}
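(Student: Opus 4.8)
The plan is to read off the optimality conditions \eqref{el-1} from the pointwise first-order conditions for minimising $\h\costFct$ over the convex admissible set $L^{\infty}((0,1);[0,1])$. Since this set is convex and, by \eqref{deriva-3}, the derivative of $\h\costFct$ at an optimal $\theta$ is represented (in the $L^1$--$L^{\infty}$ duality) by the function $g := -K'P' + c_l$, where $K = K[B_\theta]$ and $P$ is the associated adjoint state from \eqref{adjoint}, any minimiser must satisfy the variational inequality
\begin{equation*}
\partial\h\costFct[\theta](\beta - \theta) = \int_0^1 g(t)\,(\beta(t) - \theta(t))\intd t \ge 0
\end{equation*}
for every admissible $\beta\in L^{\infty}((0,1);[0,1])$. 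First I would note that $g\in L^1$, because $K, P\in W^{1,2}_l(0,1)$ give $K', P'\in L^2$, so that this pairing is well defined and the directional derivative along every feasible direction $\beta-\theta$ makes sense.

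Next I would localise this inequality to pointwise conditions on $g$. Testing with perturbations $\beta$ supported on small sets around Lebesgue points of $g$ and of $\theta$, and using that on $\{\theta = 0\}$ only the increasing direction $\beta - \theta \ge 0$ is admissible, on $\{\theta = 1\}$ only the decreasing direction $\beta - \theta \le 0$, and on $\{0 < \theta < 1\}$ both directions, the variational inequality yields, almost everywhere,
\begin{equation*}
g \ge 0 \ \text{on}\ \{\theta = 0\},\qquad g \le 0 \ \text{on}\ \{\theta = 1\},\qquad g = 0 \ \text{on}\ \{0 < \theta < 1\}.
\end{equation*}
Unwinding the definition of $g$, this is the same as $K'P' \le c_l$ on $\{\theta = 0\}$, $K'P' \ge c_l$ on $\{\theta = 1\}$, and $K'P' = c_l$ on $\{0 < \theta < 1\}$.

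Finally I would multiply each of these by the strictly positive factor $B_\theta^2$ and use that $B_\theta = a$ on $\{\theta = 0\}$ and $B_\theta = b$ on $\{\theta = 1\}$, while $B_\theta\in(a,b)$ on the interior set. This converts the three conditions into $B_\theta^2 K'P' \le c_l a^2$, $B_\theta^2 K'P' \ge c_l b^2$, and $B_\theta^2 K'P' = c_l B_\theta^2$, which is precisely \eqref{el-1}; I would also remark that $B_\theta^2 K'P' = (B_\theta K')(B_\theta P') = kp$ is continuous in view of the regularity established for $k$ and $p$ (cf. \eqref{p'}), so the pointwise statements are meaningful. The main obstacle is not the algebra but the two analytic ingredients underpinning it, both of which are supplied earlier: the Gâteaux differentiability of the control-to-state map $\theta\mapsto K[B_\theta]$ and the validity of the adjoint-based gradient formula \eqref{deriva-3}, which rest on the linearised state equation \eqref{Kdot} and the testing computations following it (as in \cite{HoRuSi16}); and the measure-theoretic localisation that turns the integral variational inequality into the a.e.\ pointwise conditions on the level sets of $\theta$.
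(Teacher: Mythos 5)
Your proposal is correct and follows essentially the same route as the paper: both arguments apply the first-order optimality condition for minimising $\h\costFct$ over $L^{\infty}((0,1);[0,1])$, use the gradient formula \eqref{deriva-3} to identify the derivative with $-K'P'+c_l$, localise to the level sets $\{\theta=0\}$, $\{\theta=1\}$, $\{0<\theta<1\}$ via sign-constrained test directions, and multiply through by $B_\theta^2$ using $B_\theta=a$ and $B_\theta=b$ on the extreme sets. Your write-up merely makes explicit the variational-inequality formulation and the Lebesgue-point localisation that the paper leaves implicit.
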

\begin{proof}
The Euler-Lagrange equation is
$\partial \h \costFct[\theta](\beta)\geq 0$
for all $\beta\in L^{\infty}(I)$ satisfying
$\beta \geq 0$ on $\{\theta = 0\}$ and $\beta \leq 0$ on $\{\theta = 1\}$.
In view of \eqref{deriva-3} this leads to the following pointwise conditions:
$$
K'P' 
\begin{cases}
\leq c_l&\mbox{ on }\{\theta = 0\}
\\
\geq c_l&\mbox{ on }\{\theta = 1\}
\\
= c_l&\mbox{ on }\{\theta\in (0, 1)\}.
\end{cases}
$$
We multiply both sides by $B^2_{\theta}$ to deduce \eqref{el-1},
because $B_{0} \equiv a$ and $B_1 \equiv b$. 
\end{proof}

Finally, the following theorem identifies some features of the optimal designs.

\begin{theorem}[characterization of optimal designs]\label{optimal}
Every optimal design $\theta$ is continuous on $[0, 1]$,
and either $\theta \equiv 0$ or
there are $0 \leq t_0 < t_1 < 1$ such that
\begin{itemize}
\item if $t_0 > 0$ then $\theta = 1$ on $(0, t_0)$,
\item $\theta$ is nonzero and strictly decreasing
on $(t_0, t_1)$ and 
\item $\theta = 0$ on $[t_1, 1]$.
\end{itemize}
Moreover, $\theta\in C^{\infty}(t_0, t_1)$.
\end{theorem}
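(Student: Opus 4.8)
The plan is to reduce the theorem to the monotonicity of the single scalar function
$g := B_{\theta}^2 K'P' = kp$, where I write $k := B_{\theta}K'$ and $p := B_{\theta}P'$, and then read off the structure of $\theta$ directly from the pointwise optimality conditions \eqref{el-1}. First I would pin down the sign of $k$: writing the state equation \eqref{state} as $k' = (1-t)\cos K$ and using $K \in (-\tfrac{\pi}{2}, 0)$ on $(0,1)$ gives $k' > 0$ there, while the natural boundary condition yields $k(1) = B_{\theta}(1)K'(1) = 0$; hence $k < 0$ on $[0,1)$. Combining this with the sign structure of $p$ from Lemma \ref{lemon2} determines the sign of $g$. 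In the case $\tau = 1$ one has $p > 0$ on $[0,1)$, so $g < 0$ on $(0,1)$; since $c_l > 0$, the conditions \eqref{el-1} can never be satisfied with $\theta = 1$ or $\theta \in (0,1)$, forcing $\theta \equiv 0$. In the case $\tau < 1$ one has $p < 0$ on $[0,\tau_0)$ and $p > 0$ on $(\tau_0, 1)$, hence $g > 0$ on $[0,\tau_0)$ and $g < 0$ on $(\tau_0,1)$, so again $\theta = 0$ on $[\tau_0, 1]$.

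The key step is the monotonicity of $g$ on $[0,\tau_0]$. Differentiating, $g' = k'p + kp'$, and on $(0,\tau_0)$ we have $k' > 0$, $p < 0$, $k < 0$, and $p' > 0$ (by Lemma \ref{lemon2}, since $\tau_0 < \tau$); both terms are therefore negative and $g' < 0$. Thus $g$ is strictly decreasing on $[0,\tau_0]$, from $g(0) > 0$ down to $g(\tau_0) = k(\tau_0)p(\tau_0) = 0$. I would then observe that \eqref{el-1} determines $\theta$ as a function of $g$ alone: a pointwise case analysis shows $g < c_l a^2 \Rightarrow \theta = 0$, $g > c_l b^2 \Rightarrow \theta = 1$, and $c_l a^2 \leq g \leq c_l b^2 \Rightarrow \theta = (\sqrt{g/c_l} - a)/(b-a)$, i.e. $\theta = \Theta(g)$ for a continuous, nondecreasing $\Theta$ that is strictly increasing on $[c_l a^2, c_l b^2]$. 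Since $g$ is continuous, this gives continuity of $\theta$.

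Setting $t_0$ and $t_1$ to be the points of $[0,\tau_0]$ where $g$ equals $c_l b^2$ and $c_l a^2$ respectively (unique by strict monotonicity, with $t_0 = 0$ if $g(0) \leq c_l b^2$, and with $\theta \equiv 0$ if even $g(0) \leq c_l a^2$), the strict decrease of $g$ combined with the strict increase of $\Theta$ on the middle branch yields $\theta = 1$ on $(0,t_0)$, $\theta$ nonzero and strictly decreasing on $(t_0,t_1)$, and $\theta = 0$ on $[t_1,1]$; moreover $t_1 < \tau_0 < 1$, so $t_0 < t_1 < 1$ as required. For the smoothness assertion on $(t_0,t_1)$ I would exploit the relation $B_{\theta} = \sqrt{g/c_l} = \sqrt{kp/c_l}$ valid there, which closes the system $k' = (1-t)\cos K$, $K' = k/B_{\theta}$, $p' = -(1-t)(\sin K)(P - \cot K)$, $P' = p/B_{\theta}$ into a first-order system whose right-hand side is a smooth function of $(k,K,p,P,t)$ wherever $kp > 0$ and $\sin K \neq 0$ — both of which hold on $(t_0,t_1)$. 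Standard ODE regularity then bootstraps $(k,K,p,P) \in C^{\infty}$, whence $\theta = \Theta(\sqrt{kp/c_l}) \in C^{\infty}(t_0,t_1)$.

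The main obstacle I anticipate lies in the reading $\theta = \Theta(g)$: because $g$ is itself constructed from the optimal $\theta$ through the state and adjoint solutions $K$ and $P$, one must argue that at every point the value $\theta(t)$ is genuinely forced by \eqref{el-1} (so the relation is a legitimate pointwise consequence rather than a circular definition), and one must confirm that the transition values $g = c_l a^2$ and $g = c_l b^2$ are attained at single points only. Both of these are delivered precisely by the strict monotonicity $g' < 0$, which is why establishing that inequality is the crux of the argument.
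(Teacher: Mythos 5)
Your overall strategy coincides with the paper's own proof: establish the sign and strict monotonicity of $g = kp$ from Lemma \ref{lemon2} and the state equation, read $\theta$ off pointwise from \eqref{el-1}, define $t_0$, $t_1$ as the points where $g$ crosses the levels $c_l b^2$ and $c_l a^2$, and bootstrap regularity from the relation $B_{\theta}^2 = kp/c_l$ on $(t_0, t_1)$. In the case $\tau < 1$ your argument is correct, and in fact more careful than the paper's text: you work with the zero $\tau_0$ of $p$, whereas the paper writes $\tau$ where $\tau_0$ is meant (on $(\tau_0,\tau)$ one has $p>0$, so the paper's assertion that both $k$ and $p$ are negative on all of $[0,\tau)$ is literally wrong there, though harmless since $kp\le 0$ on $[\tau_0,1]$). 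Your smoothness argument, closing the first-order system for $(k,K,p,P)$ via $B_{\theta} = \sqrt{kp/c_l}$ and invoking standard ODE regularity, is a legitimate alternative to the paper's integral-representation bootstrap.

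The genuine problem is your treatment of the case $\tau = 1$. You import from the statement of Lemma \ref{lemon2} the claim that $p > 0$ on all of $[0,1)$, conclude $g = kp < 0$ on $(0,1)$, and hence that $\theta \equiv 0$ is forced. But that claim in the lemma's statement is an error of the paper: it contradicts the lemma's own proof, which establishes $p(0) < 0$ and $p' > 0$ on $(0,\tau) = (0,1)$ together with $p(1) = 0$, hence $p < 0$ on $[0,1)$ when $\tau = 1$. It also contradicts Remark (ii) following the theorem, which treats exactly the case $\tau = 1$, asserts there that $kp$ is \emph{positive} and strictly decreasing on $(0,1)$, and shows that $\theta \equiv 0$ is optimal only under the additional condition $c_l a^2 \geq \|B_{\theta}^2 K'P'\|_{L^{\infty}(0,1)}$. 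Consequently, in the case $\tau = 1$ the optimal design need not vanish, and your proof supplies no argument establishing the claimed structure for such designs; it wrongly excludes them, so the theorem remains unproved in precisely that regime. The repair is immediate and is what the paper's proof actually does: for $\tau = 1$ both $k$ and $p$ are negative and strictly increasing on $[0,1)$, so $g$ is positive and strictly decreasing there, and your case-$(\tau<1)$ argument applies verbatim with $\tau_0$ replaced by $1$.
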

\begin{proof}
According to Lemma \ref{lemon2} there is $\tau\in (0, 1]$ such that
$p = B_{\theta}P' \geq 0$ on $[\tau, 1]$ (when $\tau = 1$ then we have $p(\tau) = 0$
due to the boundary condition).
\\
Hence, setting $k = B_{\theta}K'$, we have
$B_{\theta}^2K'P' = kp \leq 0$ on $[\tau, 1]$.
On $[0, \tau)$ the continuously differentiable function $kp$
is positive and strictly decreasing,
because here both $k$ and $p$ are negative and strictly increasing.
Define 
$$
t_1 = \inf\left\{t\in [0, \ell] : (kp)(t) \leq c_la^2\right\}.
$$
Then $t_1 < \tau$ because $kp\leq 0 < c_l a^2$ on $[\tau, 1]$
and $kp$ is continuous.
On the other hand, unless $\theta = 0$ almost everywhere
(in which case we are done, so we exclude this from now on),
we must have $t_1 > 0$. In particular,
$$
(kp)(t_1) = c_l a^2.
$$
Since $t_1\in (0, \tau)$ and since
$kp$ is strictly decreasing on $(0, \tau)$ we conclude that
$\{\theta = 0\} = [t_1, 1]$. 
\\
On $[0, t_1]$ the function $kp$ decreases strictly from $(kp)(0)$
to $c_l a^2$. Set
$$
t_0 = \sup\left\{t\in [0, \ell] : (kp)(t) > c_l b^2\right\},
$$
and set $t_0 := 0$ if the set on the right-hand side is empty.
By monotonicity
$$
kp > c_l b^2 \mbox{ on }(0, t_0).
$$
Hence $\theta = 1$ on this set, by \eqref{el-1}.
\\
Since $c_lb^2$ is strictly greater than $c_la^2$ and since 
$kp$ is continuous
and strictly monotone on $[0, t_1]$, we have
$$
kp\in \left(c_l a^2, c_l b^2 \right)
\mbox{ on }(t_0, t_1).
$$
Hence \eqref{el-1} implies that $\theta \in (0,1)$  on $(t_0, t_1)$ and that
\begin{equation}
\label{1407-1}
B_{\theta}^2 = \frac{1}{c_l}\cdot kp\mbox{ on }(t_0, t_1).
\end{equation}
In view of this,
the monotonicity and continuity of $\theta$ follow from the same properties for $kp$,
because $\theta \mapsto B_{\theta}$ is linear and strictly increasing.
\\
It remains to show that $\theta\in C^{\infty}(t_0, t_1)$. 
This follows from a bootstrap argument, since $kp$
is always more regular than $\theta$.
More precisely, we have
$$
K'(t) = \frac{1}{B_{\theta}(t)}\int_t^1(1-s)\cos K(s)\ ds.
$$
We have already shown that $B_{\theta}\in C^0([0,1])$.
Hence $K'\in C^0([0,1])$, too. Since $(B_{\theta}K')' = (1-t)\cos K$
we have $(B_{\theta}K')''\in C^0([0,1])$.
\\
Now we use 
$$
P'(t) = - \frac{1}{B_{\theta}(t)}\int_t^1(1-s)(\cos K(s) - P(s)\sin K(s))\ ds
$$
to see that, similarly, $P'\in C^0([0, 1])$, hence from
$p' = (1-t)(\cos K - P\sin K)$ we see $p''\in C^0([0,1])$.
\\
But then $kp\in C^2([0,1])$ and by \eqref{1407-1}
$$
B_{\theta} = \left(\frac{kp}{c_l}\right)^{1/2}
\mbox{ on }(t_0, t_1).
$$
Together with the fact that $kp\neq 0$ on $(t_0, t_1)$
this allows us to conclude that $B_{\theta}\in C^2(t_0, t_1)$.
We can bootstrap the above argument to conclude that $B_{\theta}\in C^{\infty}(t_0, t_1)$.
\end{proof}

{\bf Remarks.}
\begin{enumerate}[(i)]
\item If
\begin{equation}
\label{nonop-1}
c_la^2 > \|B_{\theta}^2K'P'\|_{L^{\infty}(0, 1)}
\end{equation}
then $\theta\equiv 0$ is the unique optimal design.
Indeed, if \eqref{nonop-1} is satisfied then \eqref{el-1}
implies that $\theta = 0$ everywhere.
\\
Observe that the right-hand side of \eqref{nonop-1} can be bounded in terms of $a$ and $b$,
while the left-hand side can be made arbitrarily large for fixed $a$ and $b$ by choosing $c_l$ large enough.
\item When $\tau = 1$ (with $\tau$ as in Lemma \ref{lemon2}) then we have more precisely:
$\theta \equiv 0$ is the unique optimal design if and only if
\begin{equation}
\label{nonop-2}
c_l a^2 \geq \|B_{\theta}^2K'P'\|_{L^{\infty}(0, 1)}.
\end{equation}
Indeed,
observe that $B_{\theta}^2K'P' = kp$ is positive and strictly decreasing on $(0, 1)$.
Hence if \eqref{nonop-2} is satisfied, then
$kp < a^2 c_l$ everywhere on $(0, 1)$. Hence
\eqref{el-1} implies $\theta \equiv 0$.
Conversely, if \eqref{nonop-2} is not satisfied, then
$kp > a^2 c_l$
on a set of positive length, which must be an interval $(0, t_0)$, due to
the monotonicity of $kp$.
And according to \eqref{el-1} we have $\theta > 0$ on $(0, t_0)$.
\end{enumerate}

\section{Computation of optimal designs} \label{sec:numericOptDesign}
As in Section~\ref{sec:numericStateEq}, we take into account the DKT element for displacements. 
To describe the material distribution $B$ we use a phase-field function $\pf \in W^{1,2}(S,[-1,1])$.
More precisely, we define 
\begin{align*}
 B_h[\pf] := a (1 - \chi(\pf)) + b \chi(\pf) 
\end{align*}
with $\chi(\pf) = \frac{\pf + 1}{2}$.
Furthermore, we regularize the interface via a Modica--Mortola functional
\begin{align} \label{eq:ModicaMortola}
  \ModicaMortola{\ModicaMortolaInterfaceWidth}[\pf] 
  & = \frac{1}{2} \int_{S} \ModicaMortolaInterfaceWidth | \nabla \pf |^2 + \frac{1}{\ModicaMortolaInterfaceWidth} \DoubleWell(\pf)  \intd x
\end{align}
with $\DoubleWell(\pf) = \frac{9}{16} (\pf^2-1)^2$, which approximates the perimeter functional \cite{MoMo77}.
We discretize the phase-field variable with piecewise affine and continuous finite elements.
Then, we take into account the discrete elastic energy \eqref{eq:Eh}
and introduce $\Estored_h^\pf[\pf_h,\displacement_h] \coloneqq \Estored_h[B_h[\pf_h],\displacement_h]$. 
According to \eqref{eq:discreteLagrangian}, 
the state equation is defined by stationary points of the Lagrangian 
\begin{align}
 \mathcal{L}_{h} [\pf_h, \displacement_h, \LMIso_h ]
 \coloneqq  \Estored_h^\pf[\pf_h,\displacement_h] - \Epot_h[\displacement_h]+ \IsoFct_h[\displacement_h,\LMIso_h] \, .
\end{align}
For a fixed phase-field $\pf_h$, we denote by $(\displacement_h, \LMIso_h)[\pf_h]$ an associated pair of discrete elastic displacement and Lagrange multiplier,
such that $(\pf_h, (\displacement_h, \LMIso_h)[\pf_h])$ is a saddle point of the Lagrangian $\mathcal{L}_h$. Then, for a parameter $\penaltyPerimeter > 0$, we define a discrete, regularized cost functional
\begin{align}
                     & \hat \costFct_h^{\penaltyPerimeter}[\pf_h] = \costFct_h^{\penaltyPerimeter}[\pf_h,\displacement_h[\pf_h]], \\
 \text{where } \quad & \costFct_h^{\penaltyPerimeter}[\pf_h,\displacement_h] := \Epot_h[\displacement_h]+ \penaltyPerimeter \ModicaMortola{\ModicaMortolaInterfaceWidth}[\pf_h]. 
\end{align}
We numerically compute a minimizer of $\costFct_h^{\penaltyPerimeter}$ with a first-order method.
This requires to evaluate the derivative
\begin{align}
\partial \hat \costFct_h^{\penaltyPerimeter}[\pf_h] ( z_h) 
 = \partial_{\pf_h} \costFctExpl_h^{\penaltyPerimeter}[ \pf_h, \displacement_h[\pf_h]] ( z_h)
 + \partial_{\displacement_h} \costFctExpl_h^{\penaltyPerimeter}[ \pf_h, \displacement_h[\pf_h]] 
   (\partial_{\pf_h} \displacement_h[\pf_h] (z_h)) 
\end{align}
for discrete test functions $z_h \in W_{h,\Gamma}(S)^3$. 
To evaluate the second term on the right we consider a suitable adjoint problem
involving the Lagrange multiplier $\LMIso_h$.
By the optimality condition $\partial_{(\displacement_h,\LMIso_h)} \mathcal{L}_h \left[ \pf_h, (\displacement_h,\LMIso_h)[\pf_h] \right] = 0$,
the variation of the mapping $\pf_h \mapsto \partial_{(\displacement_h,\LMIso_h)} \mathcal{L}_h \left[ \pf_h, (\displacement_h,\LMIso_h)[\pf_h] \right]$ vanishes,
hence
\begin{align} \label{eq:VariationLagrangian}
 \partial_{(\displacement_h,\LMIso_h)}  \partial_{(\displacement_h,\LMIso_h)} \mathcal{L}_{h} \left[ \pf_h, (\displacement_h,\LMIso_h)[\pf_h] \right]
 \partial_{\pf_h} (\displacement_h,\LMIso_h)[\pf_h]
 = - \partial_{\pf_h} \partial_{(\displacement_h,\LMIso_h)} \mathcal{L}_{h} \left[ \pf_h, (\displacement_h,\LMIso_h)[\pf_h] \right] \, .
\end{align}
Now, we define adjoint variables 
$(p_h, \mu_h) 
\in W_{h,\Gamma}(S)^3 \times \R^{3|\SetInteriorNodes|}$
as solutions of the linear system
\begin{align*} 
 & \partial_{(\displacement_h, \LMIso_h)} \partial_{(\displacement_h, \LMIso_h)} 
  \mathcal{L}_{h}[\pf_h, (\displacement_h, \LMIso_h)[\pf_h]] (z_h, \beta_h)
 (p_h, \mu_h) 
 = - \partial_{(\displacement_h, \LMIso_h)}
     \costFctExpl_h^{\penaltyPerimeter}[ \pf_h, (\displacement_h, \LMIso_h)[\pf_h]]
     (z_h, \beta_h)   
\end{align*}
for all $(z_h, \beta_h) 
\in W_{h,\Gamma}(S)^3 \times \R^{3|\SetInteriorNodes|}$.
With the help of $(p_h,\mu_h)$ and \eqref{eq:VariationLagrangian}, the derivative of the cost functional reads as 
\begin{align} \label{eq:shapeDerivativeForLagrangian}
\begin{split}
  \partial \hat \costFct_h^{\penaltyPerimeter}[\pf_h](z_h) 
 & = \partial_{\pf_h} \costFctExpl_h^{\penaltyPerimeter}[ \pf_h, (\displacement_h, \LMIso_h)[\pf_h]] (z_h)
    + \partial_{(\displacement_h,\LMIso_h)} \costFctExpl_h^{\penaltyPerimeter}[ \pf_h , (\displacement_h, \LMIso_h) [\pf_h]] 
     (\partial_{\pf_h} (\displacement_h, \LMIso_h) [\pf_h] (z_h) ) \\
 & = \partial_{\pf_h} \costFctExpl_h^{\penaltyPerimeter}[ \pf_h, (\displacement_h, \LMIso_h)[\pf_h]] (z_h) 
   + \partial_{\pf_h} \partial_{(\displacement_h, \LMIso_h)} 
     \mathcal{L}_{h}[ \pf_h, (\displacement_h, \LMIso_h)[\pf_h]](p_h, \mu_h) (z_h).
\end{split}
\end{align}
Since in our case we have
\begin{align*}
 \partial_{\pf_h} \partial_{(\displacement_h, \LMIso_h)} 
 \mathcal{L}_{h}[ \pf_h, \displacement_h, \LMIso_h ]
 =   
 \begin{pmatrix}
  \partial_{\pf_h}\partial_{\displacement_h} \Estored_h^\pf[\pf_h, \displacement_h ] & 0   \\
  0 & 0 \\
  \end{pmatrix} \, ,
\end{align*}
the expression \eqref{eq:shapeDerivativeForLagrangian} for the shape derivative simplifies to
\begin{align*}
\partial \hat \costFct_h^{\penaltyPerimeter}[\pf_h](z_h)
 = \partial_{\pf_h} \costFctExpl_h^{\penaltyPerimeter}[ \pf_h, (\displacement_h, \LMIso_h)[\pf_h]] (z_h) 
 +  \partial_{\pf_h}\partial_{\displacement_h} \Estored_h^\pf[ \pf_h, \displacement_h[\pf_h]] (p_h) (z_h) \, .
\end{align*}

Then, we apply the \verb!IPOPT! solver \cite{WaBi06} to compute minimizer of the fully discrete cost functional $\costFct_h^{\penaltyPerimeter}$
over all $\pf_h \in V_h^1(S,[-1,1])$ with the additional area constraint
\begin{align}
 \mathcal{V}_h[\pf_h] := \int_S \chi(\pf_h) \intd x = \AreaHardPhase \, .
\end{align}
We apply an adaptive refinement scheme via longest edge bisection.
More precisely, to refine the interface we mark those elements $T \in \mathcal{T}_h$ with $\fint_T |\nabla \pf_h|^2 \intd x > \frac{1}{2}$.
Additionally, we mark those elements $T \in \mathcal{T}_h$, where the isometry error $\int_T |\D u^T \D u - I|^2 \intd x$ is large,
i.e., we compute this error for all elements and select the largest $25 \%$ for a longest edge bisection refinement.

\bigskip

In what follows, we discuss selected numerically computed optimal designs.
We always choose $a = 1$ and $b=100$ for the material hardness.
At first, we study the configuration as in Figure~\ref{fig:CompareDesignsIsometry} and take into account the same area constraints $\AreaHardPhase = 0.25,0.5,0.75$.  
We always start with a coarse mesh of $|\SetNodes| = 289$ nodes and use $8$ adaptive refinement steps. 
For the Modica--Mortola functional $\ModicaMortola{\ModicaMortolaInterfaceWidth}$, we set $\penaltyPerimeter=10^{-2}$ and depending on the mesh size $h$ we choose $\ModicaMortolaInterfaceWidth=2h$.
In Figure~\ref{fig:OptDesignsIsometryF} we first consider large forces with $|f| = 100 \AreaHardPhase$.
We observe for a large amount of hard material ($\AreaHardPhase = 0.75$) that the design (I) is optimal.
However, for $\AreaHardPhase = 0.25,0.5$ we obtain optimal designs with significantly better compliance compared to the above considered designs.
Furthermore, we consider in Figure~\ref{fig:OptDesignsIsometryF} small forces with $|f| = 10 \AreaHardPhase$.
Here, for all investigated constraints $\AreaHardPhase$, the optimal solutions are different to the designs (I),(II), and (III), 
even for an area $\AreaHardPhase = 0.75$, where design (I) performs better than (II) and (III).

\begin{figure}[!htbp]
\resizebox{\textwidth}{!}{
\begin{tabular}{  m{2cm}  c  c  c  c  c }
 deformed config.
   & \begin{minipage}{0.15\textwidth} \centering
      \includegraphics[max width=\linewidth]{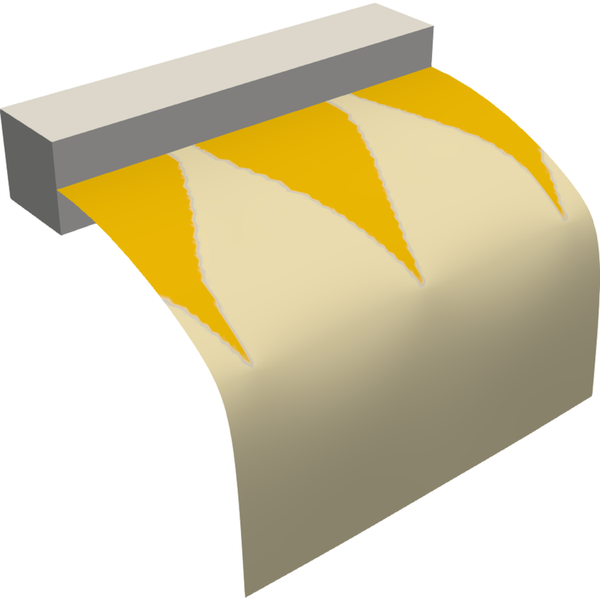}
  \end{minipage}
  & \begin{minipage}{0.15\textwidth} \centering
      \includegraphics[max width=\linewidth]{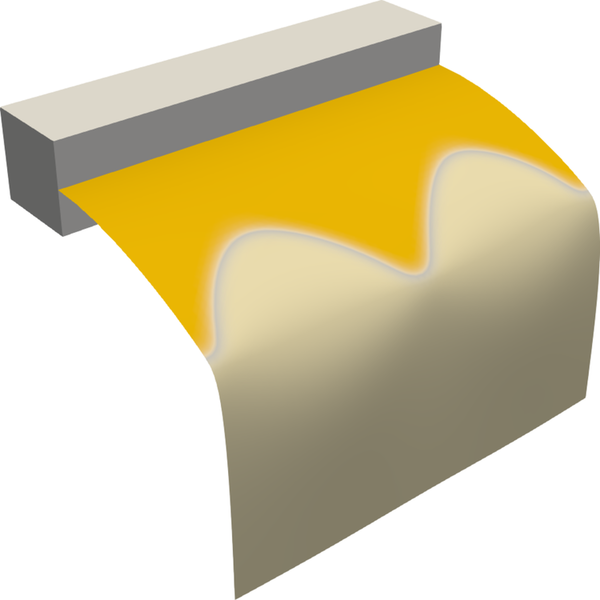}
  \end{minipage}
  & \begin{minipage}{0.15\textwidth} \centering
      \includegraphics[max width=\linewidth]{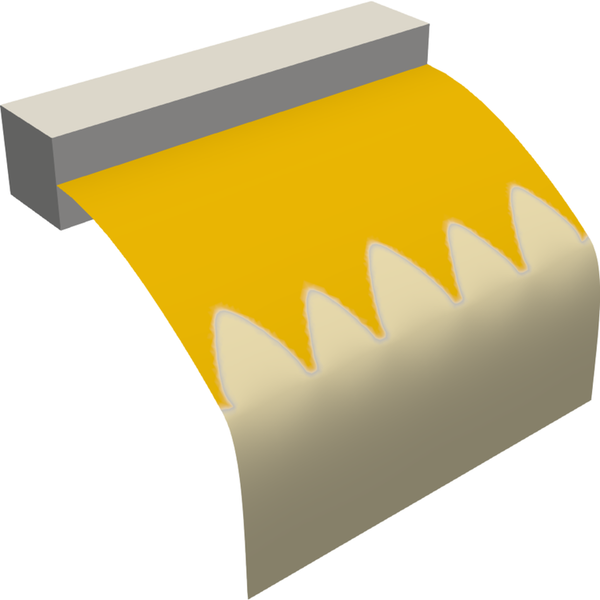}
  \end{minipage}
  & \begin{minipage}{0.15\textwidth} \centering
      \includegraphics[max width=\linewidth]{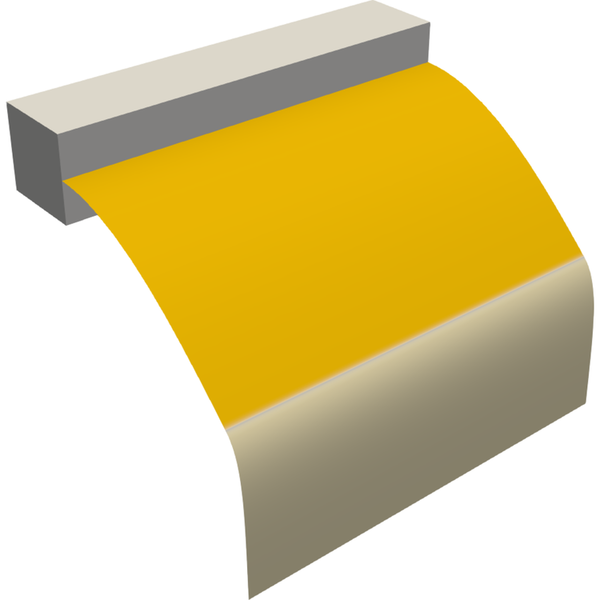}
  \end{minipage}
  & \begin{minipage}{0.15\textwidth} \centering
      \includegraphics[max width=\linewidth]{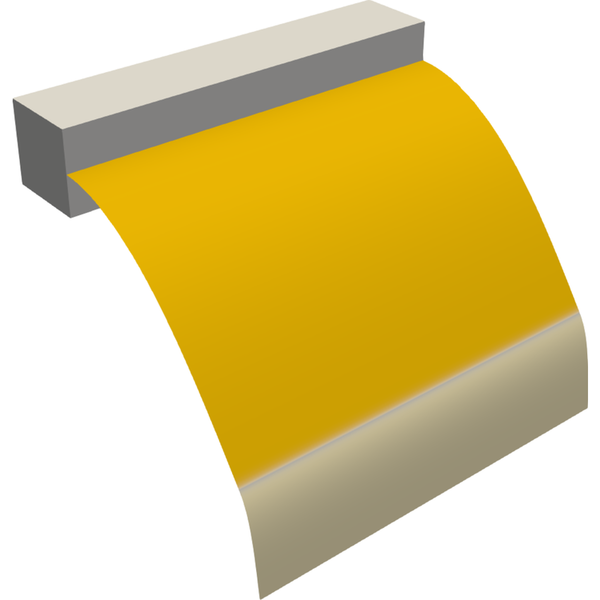}
  \end{minipage}
  \\[0.075\textwidth]
 reference config.
  & \begin{minipage}{0.15\textwidth} \centering 
     \includegraphics[max width=0.9\linewidth, max height=0.9\linewidth]{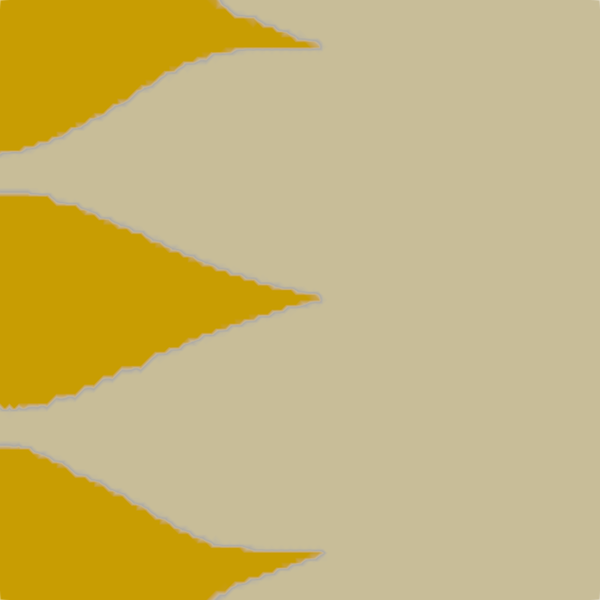}
    \end{minipage}
  & \begin{minipage}{0.15\textwidth} \centering 
     \includegraphics[max width=0.9\linewidth, max height=0.9\linewidth]{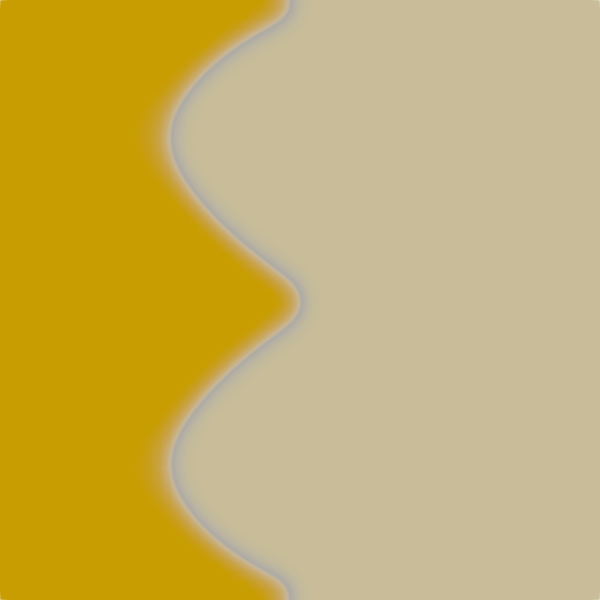}
    \end{minipage}
  & \begin{minipage}{0.15\textwidth} \centering
      \includegraphics[max width=0.9\linewidth, max height=0.9\linewidth]{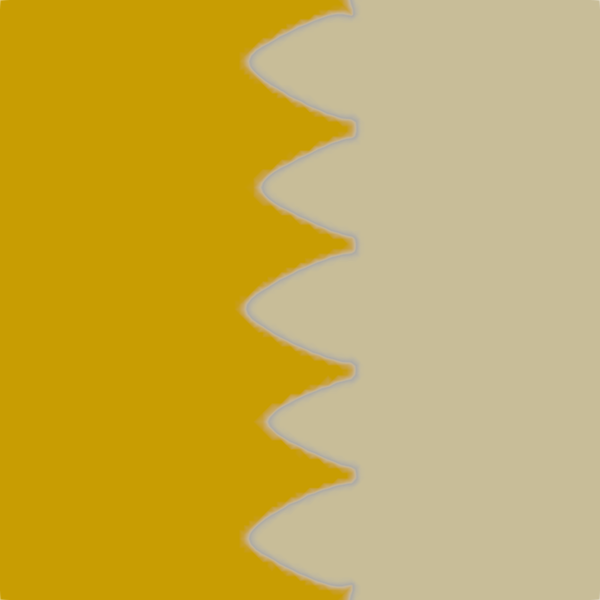}
     \end{minipage}
  & \begin{minipage}{0.15\textwidth} \centering
      \includegraphics[max width=0.9\linewidth, max height=0.9\linewidth]{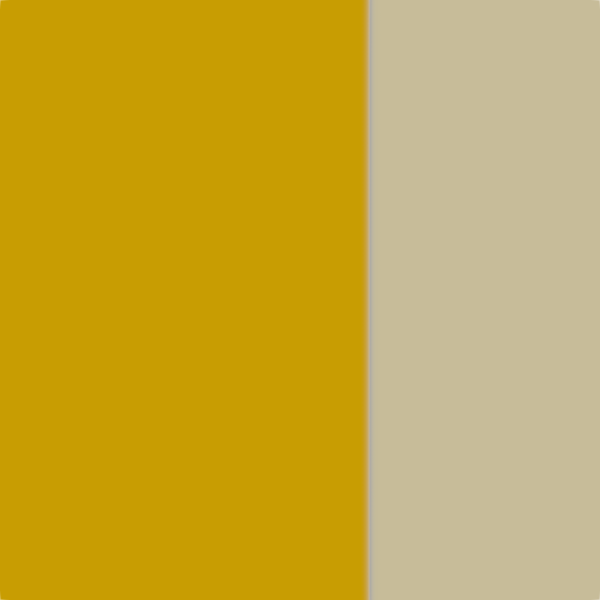}
     \end{minipage}
  & \begin{minipage}{0.15\textwidth} \centering
      \includegraphics[max width=0.9\linewidth, max height=0.9\linewidth]{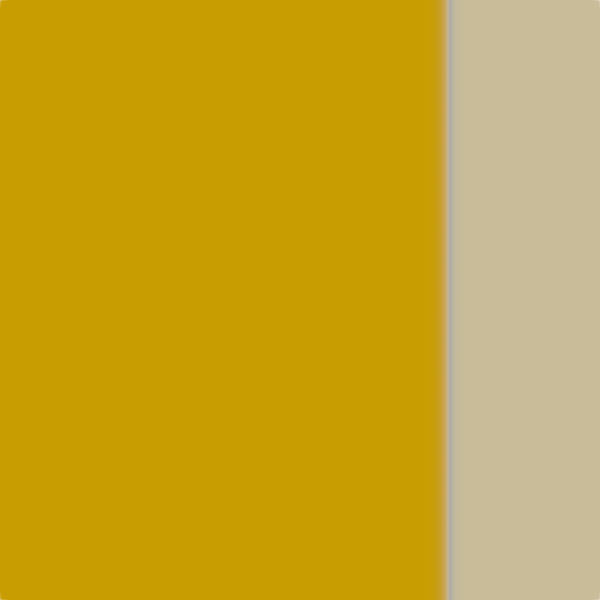}
    \end{minipage}
    \\[0.075\textwidth] \hline        deformed config.
  & \begin{minipage}{0.15\textwidth} \centering
      \includegraphics[max width=\linewidth]{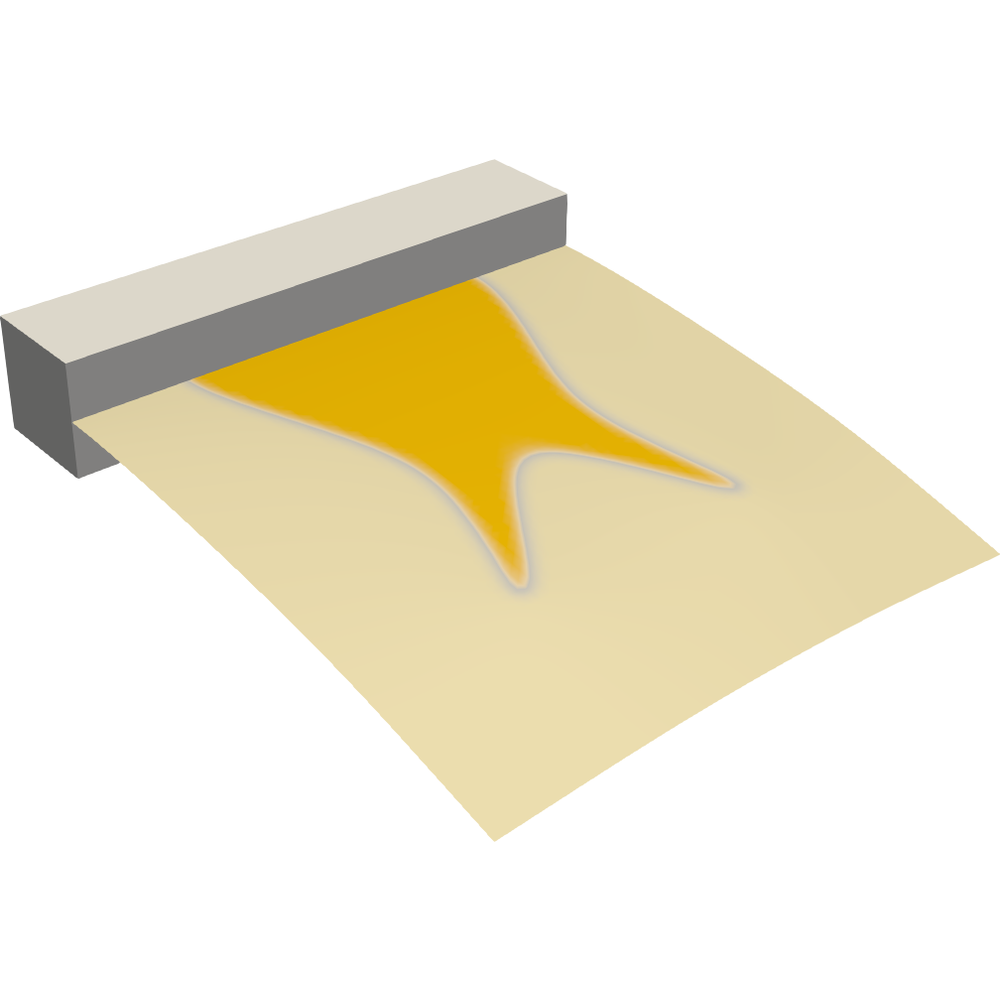}
  \end{minipage}
  & \begin{minipage}{0.15\textwidth} \centering
      \includegraphics[max width=\linewidth]{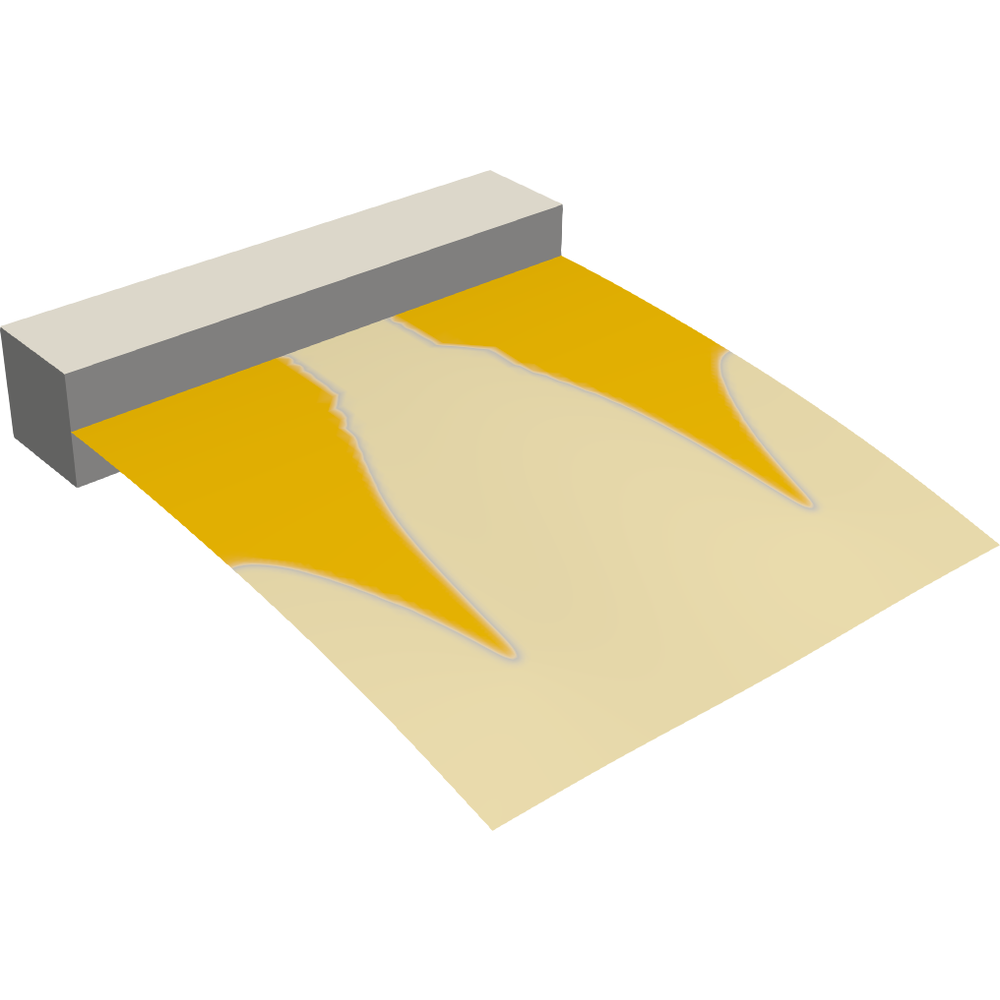}
  \end{minipage}
  & \begin{minipage}{0.15\textwidth} \centering
      \includegraphics[max width=\linewidth]{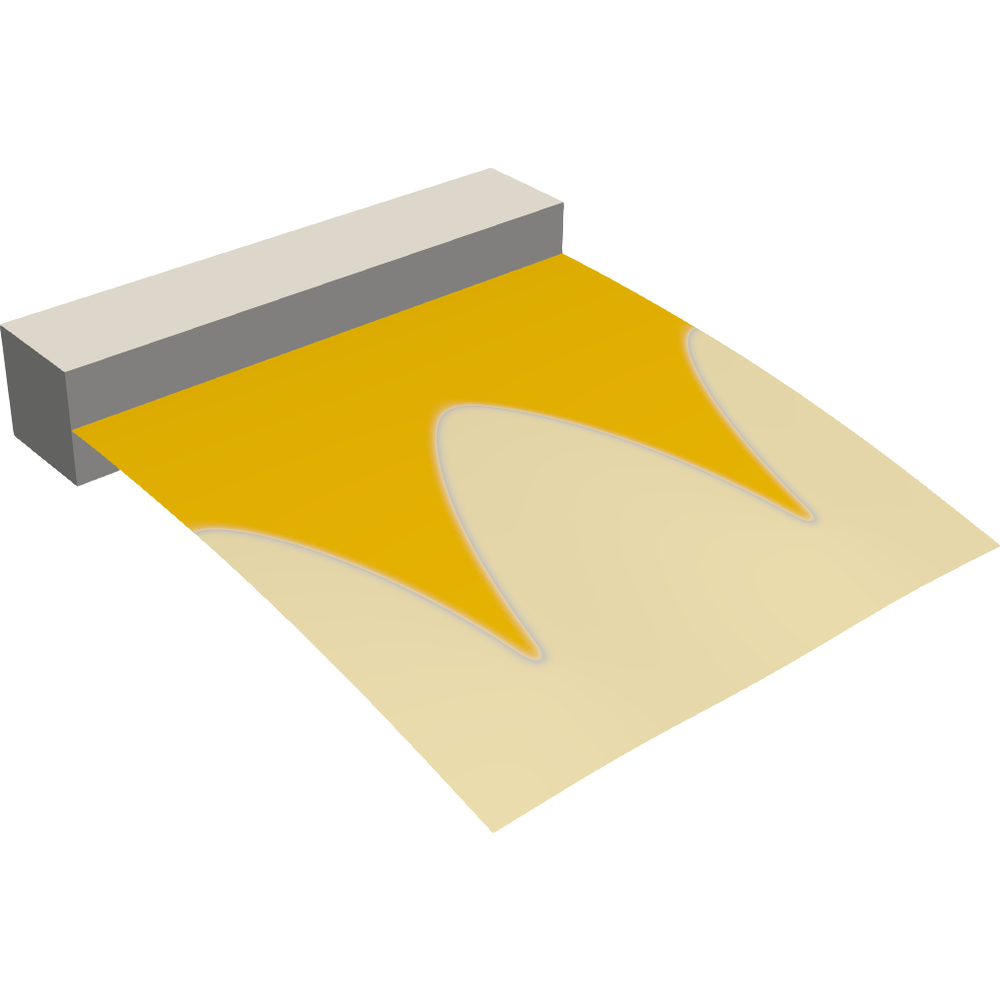}
  \end{minipage}
  & \begin{minipage}{0.15\textwidth} \centering
      \includegraphics[max width=\linewidth]{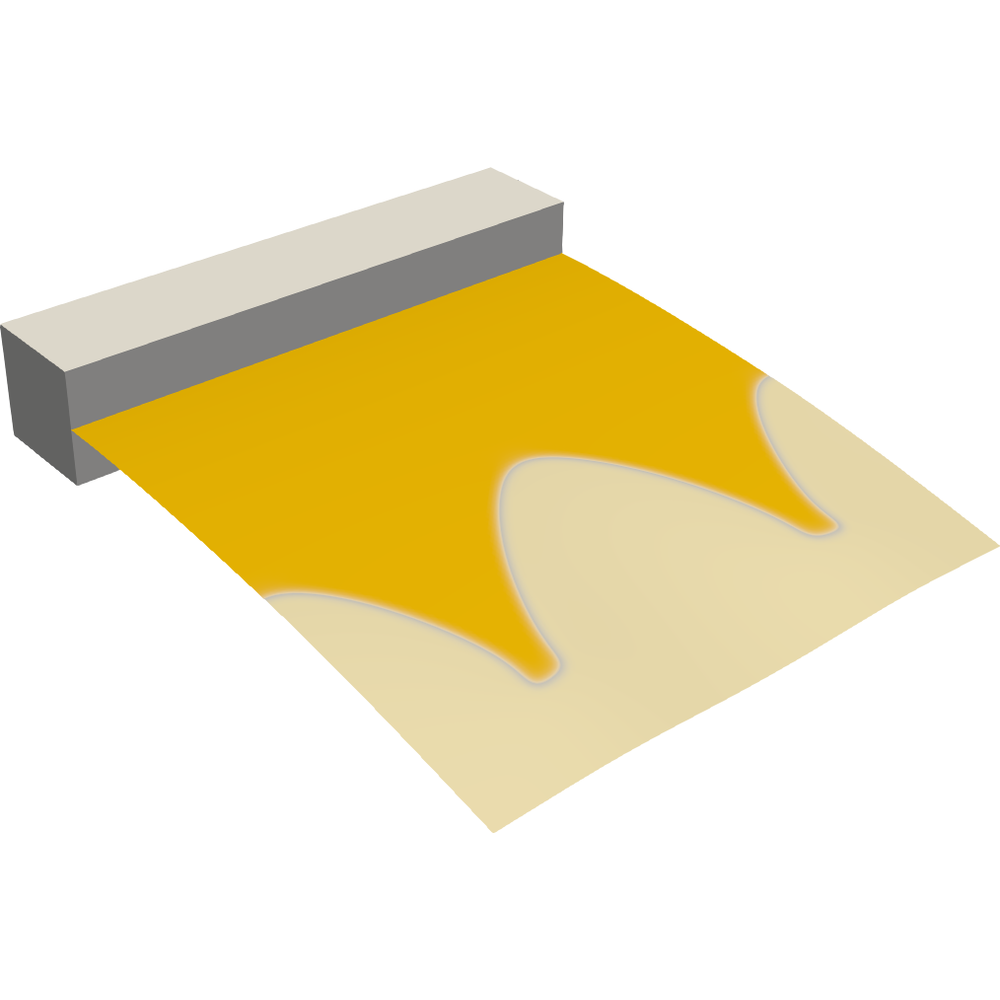}
  \end{minipage}
  & \begin{minipage}{0.15\textwidth} \centering
      \includegraphics[max width=\linewidth]{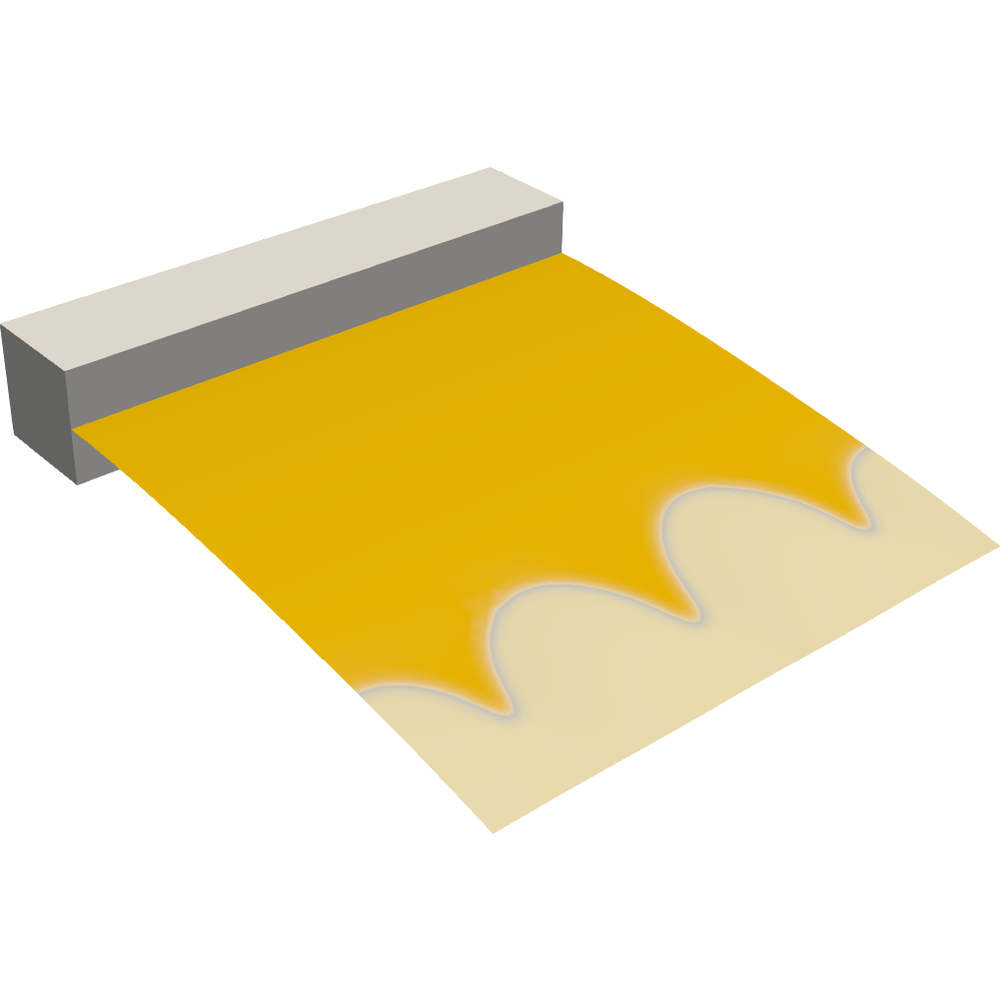}
  \end{minipage}
  \\[0.075\textwidth]
 reference config.
  & \begin{minipage}{0.15\textwidth} \centering 
     \includegraphics[max width=0.9\linewidth, max height=0.9\linewidth]{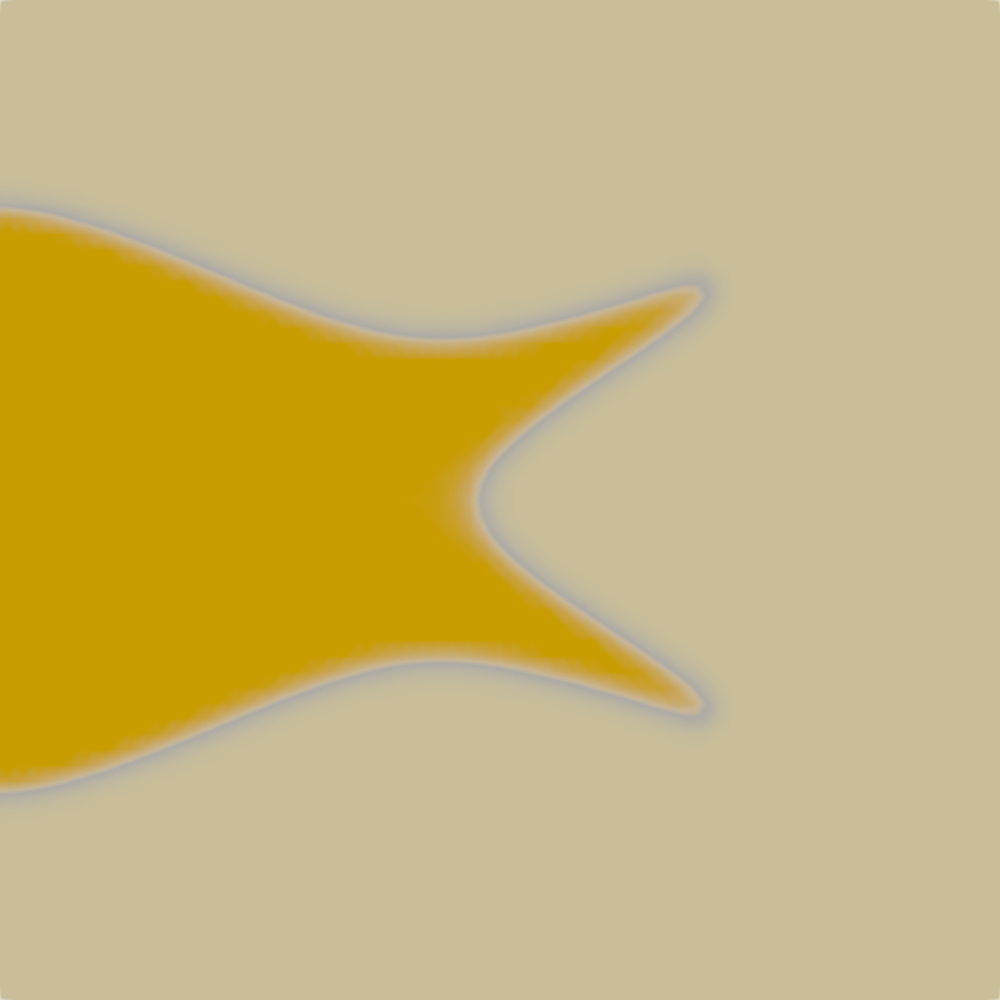}
    \end{minipage}
  & \begin{minipage}{0.15\textwidth} \centering 
     \includegraphics[max width=0.9\linewidth, max height=0.9\linewidth]{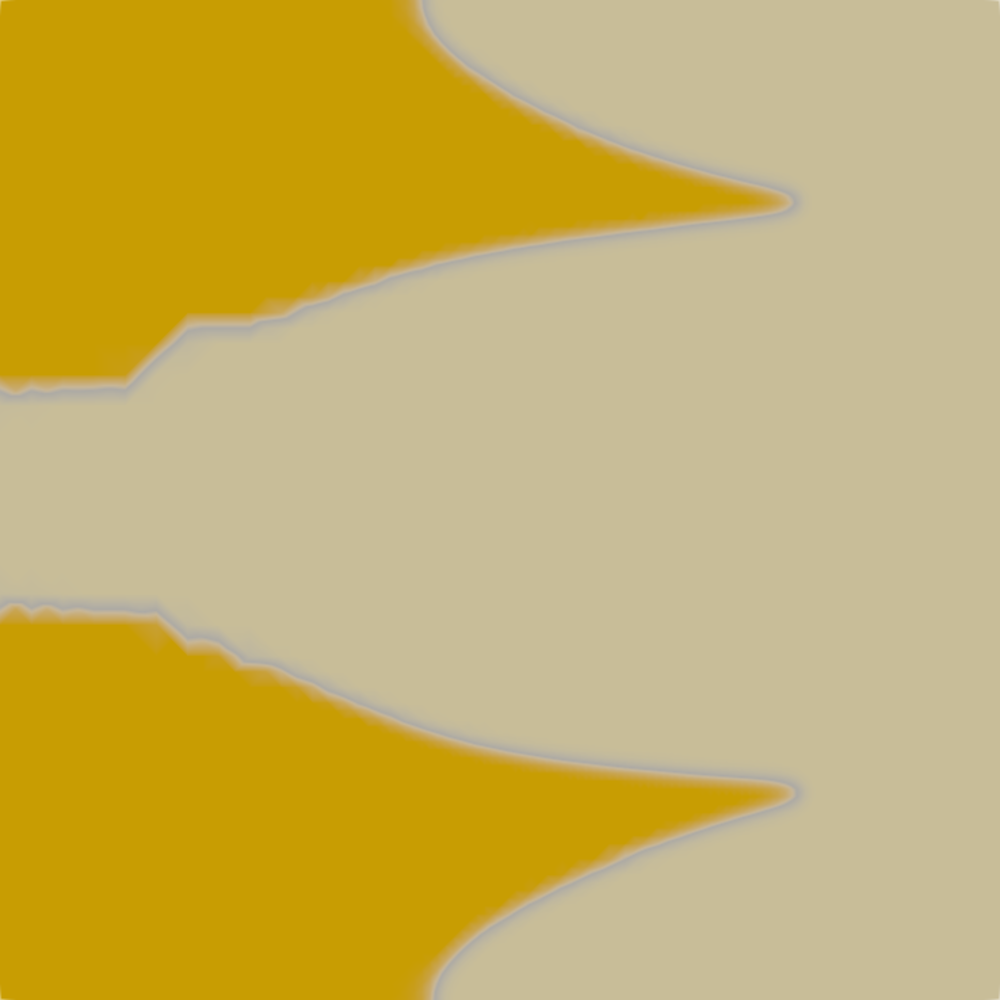}
    \end{minipage}
  & \begin{minipage}{0.15\textwidth} \centering
      \includegraphics[max width=0.9\linewidth, max height=0.9\linewidth]{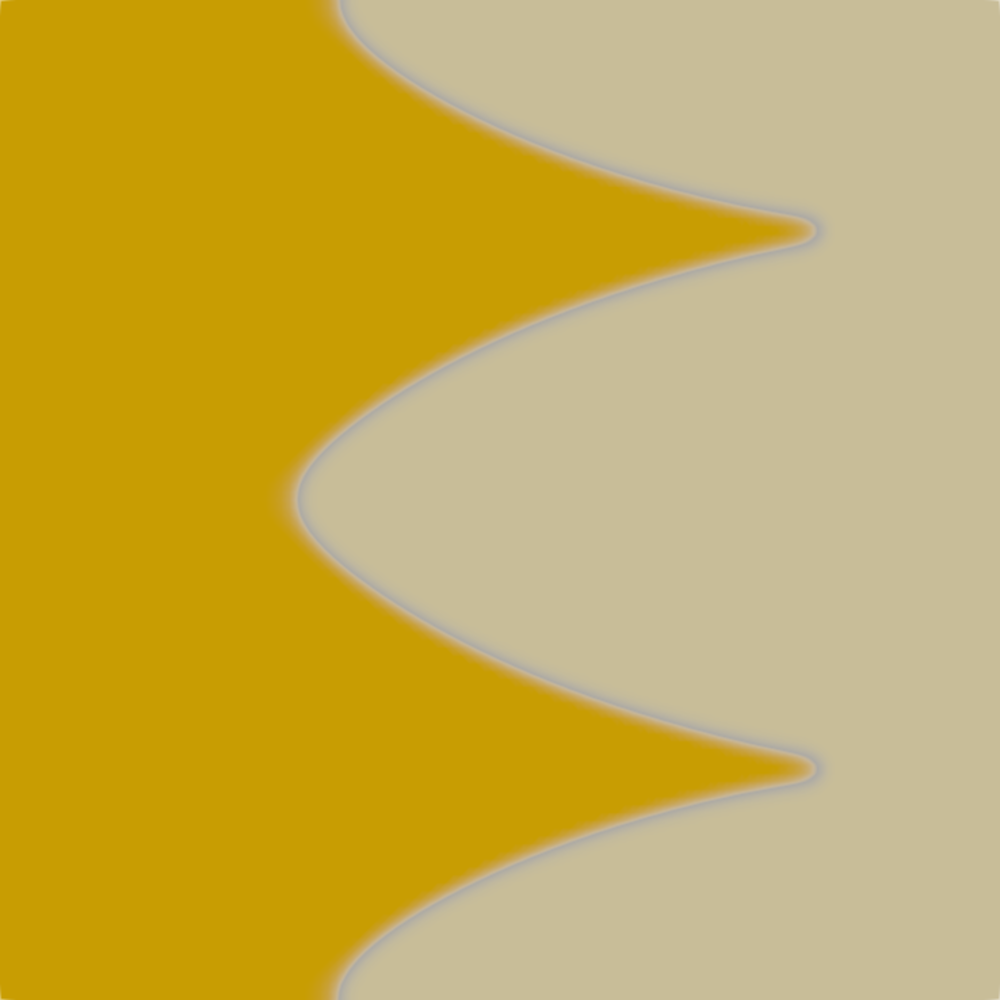}
     \end{minipage}
  & \begin{minipage}{0.15\textwidth} \centering 
     \includegraphics[max width=0.9\linewidth, max height=0.9\linewidth]{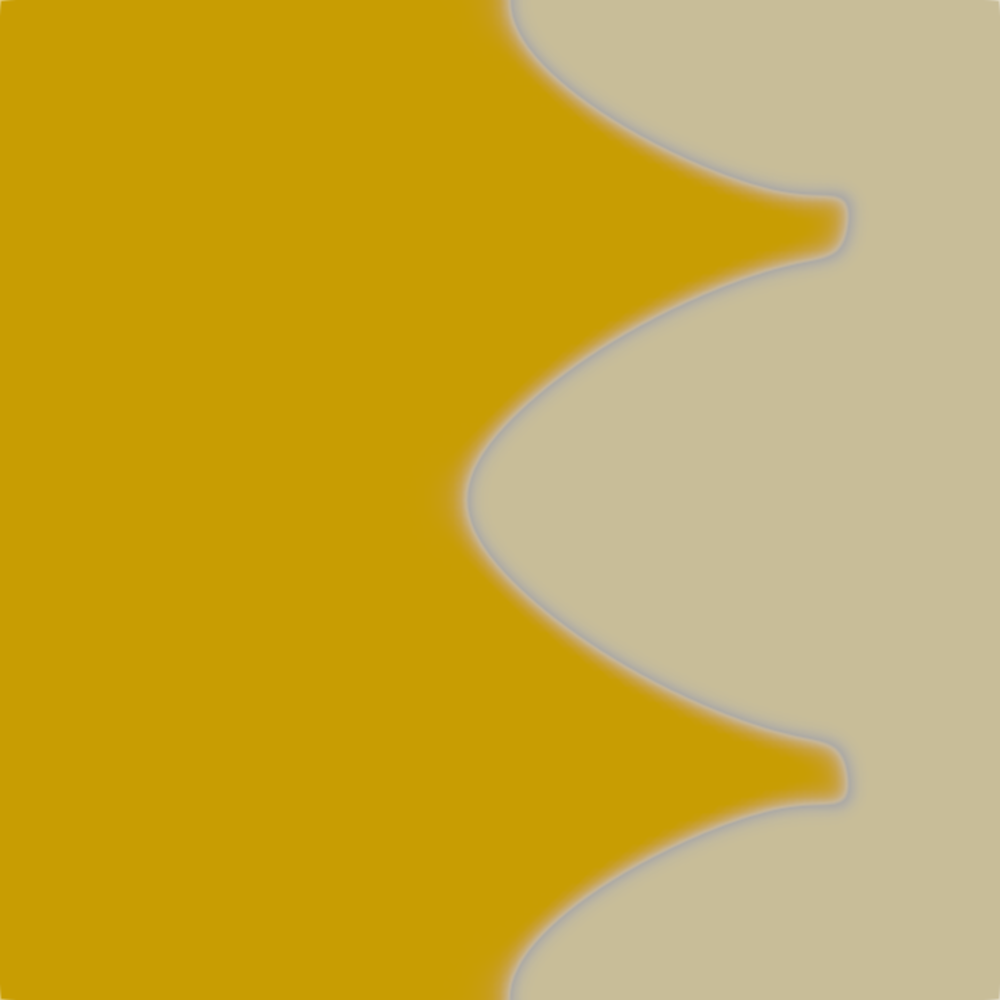}
    \end{minipage}
  & \begin{minipage}{0.15\textwidth} \centering
      \includegraphics[max width=0.9\linewidth, max height=0.9\linewidth]{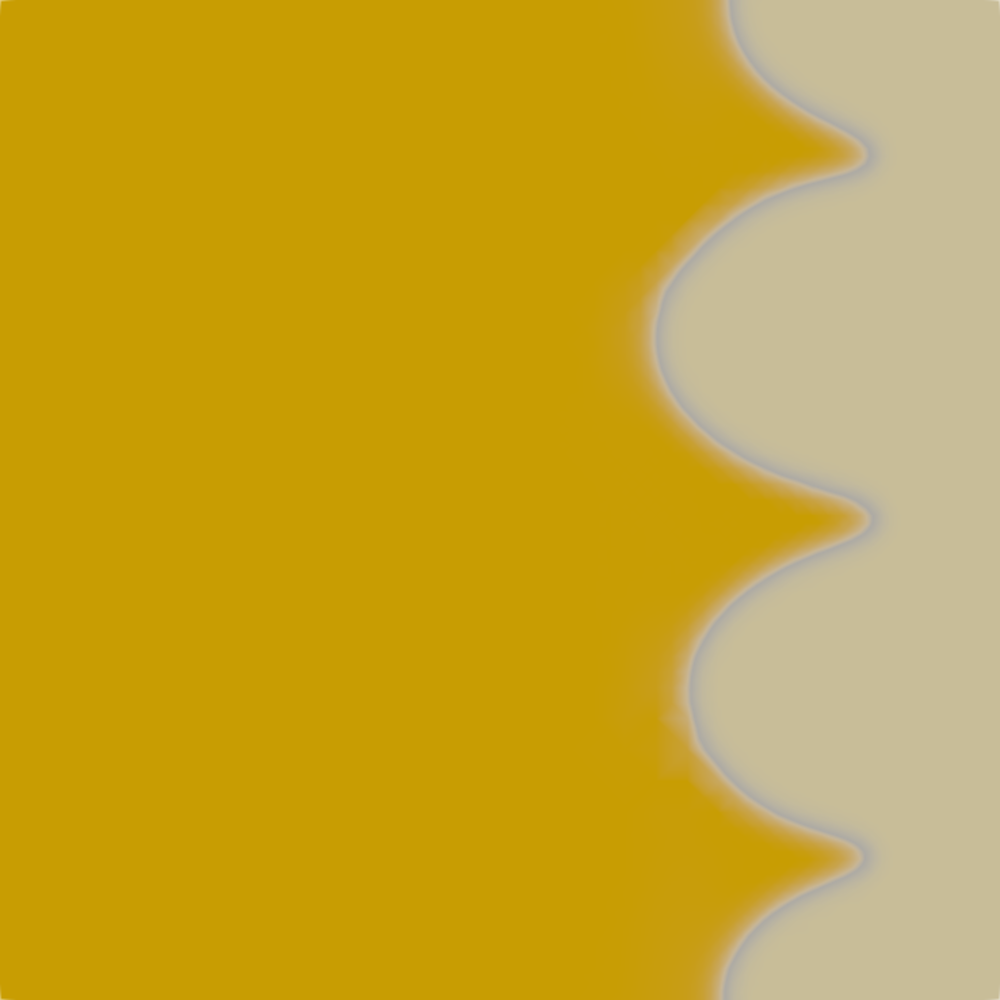}
    \end{minipage}
  \\[0.075\textwidth]
 adaptive mesh
  & \begin{minipage}{0.15\textwidth} \centering 
     \includegraphics[max width=0.9\linewidth, max height=0.9\linewidth]{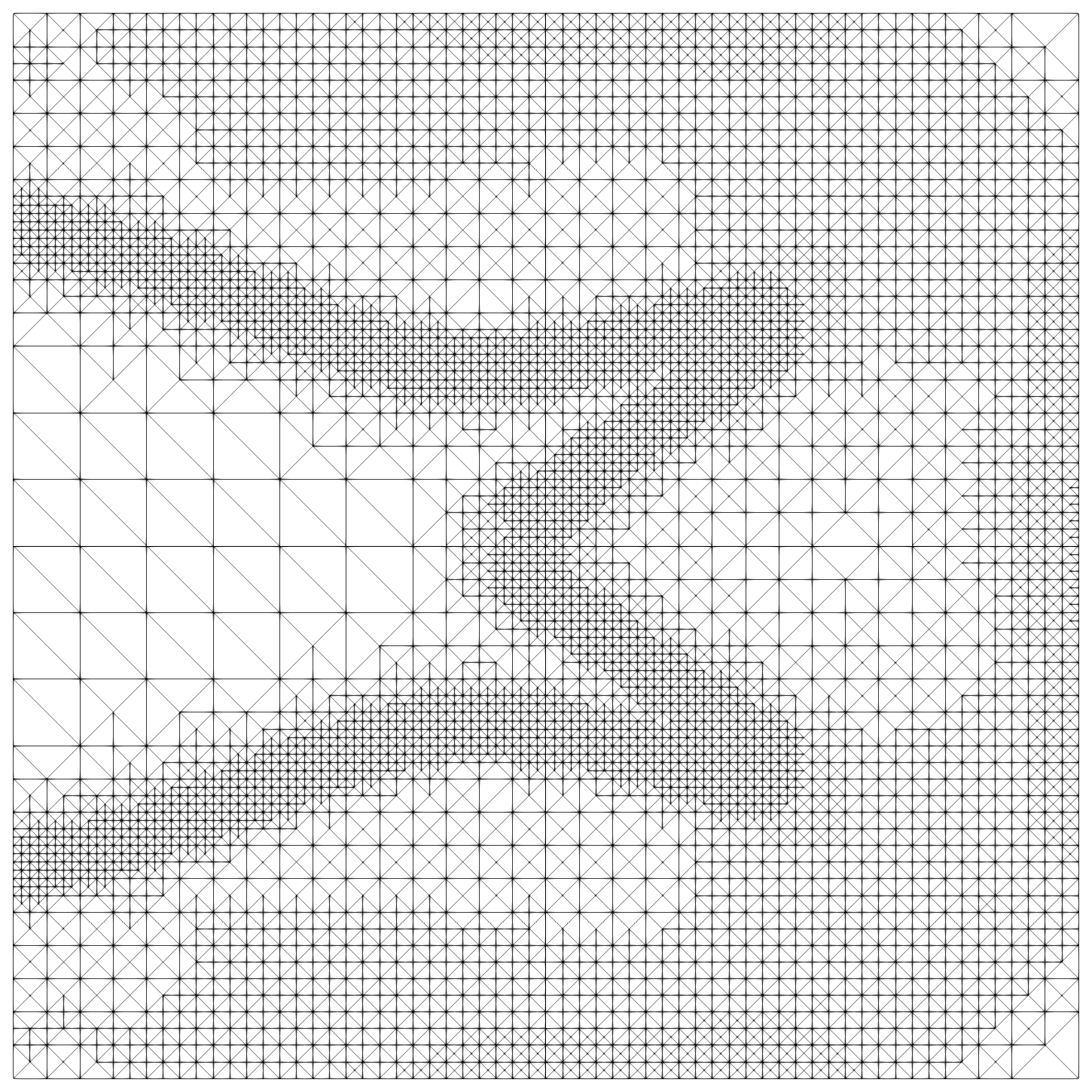}
    \end{minipage}
  & \begin{minipage}{0.15\textwidth} \centering 
     \includegraphics[max width=0.9\linewidth, max height=0.9\linewidth]{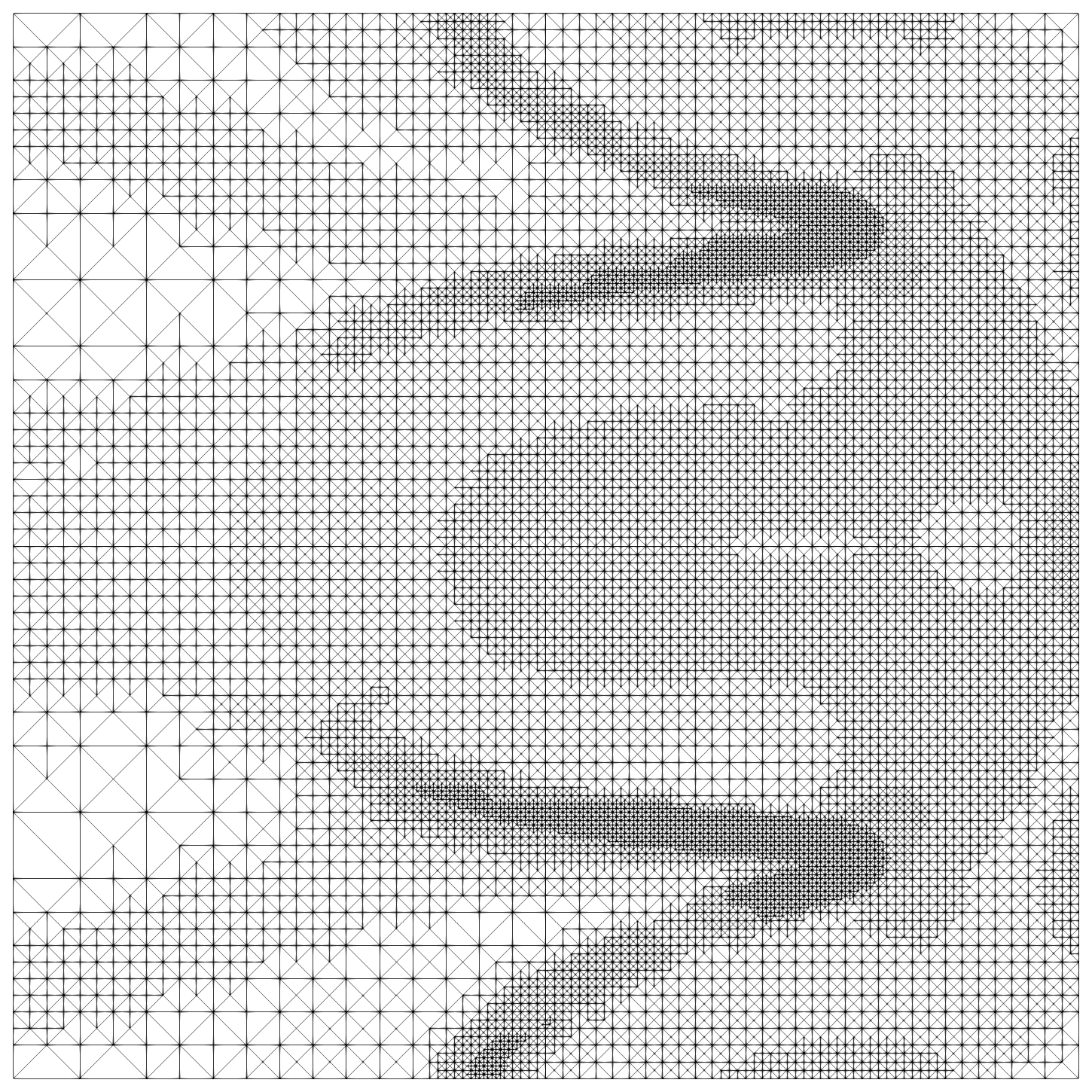}
    \end{minipage}
  & \begin{minipage}{0.15\textwidth} \centering
      \includegraphics[max width=0.9\linewidth, max height=0.9\linewidth]{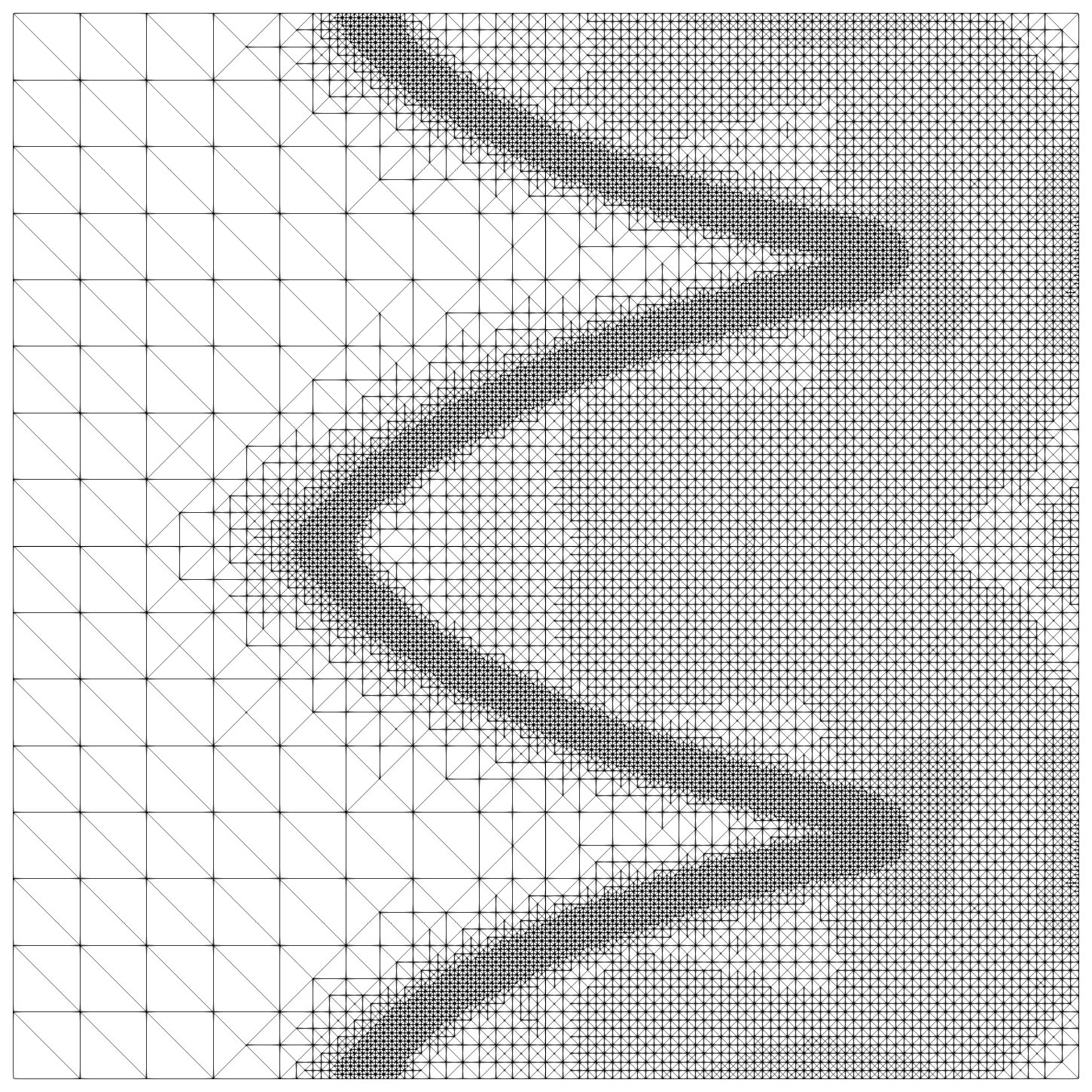}
     \end{minipage}
  & \begin{minipage}{0.15\textwidth} \centering 
     \includegraphics[max width=0.9\linewidth, max height=0.9\linewidth]{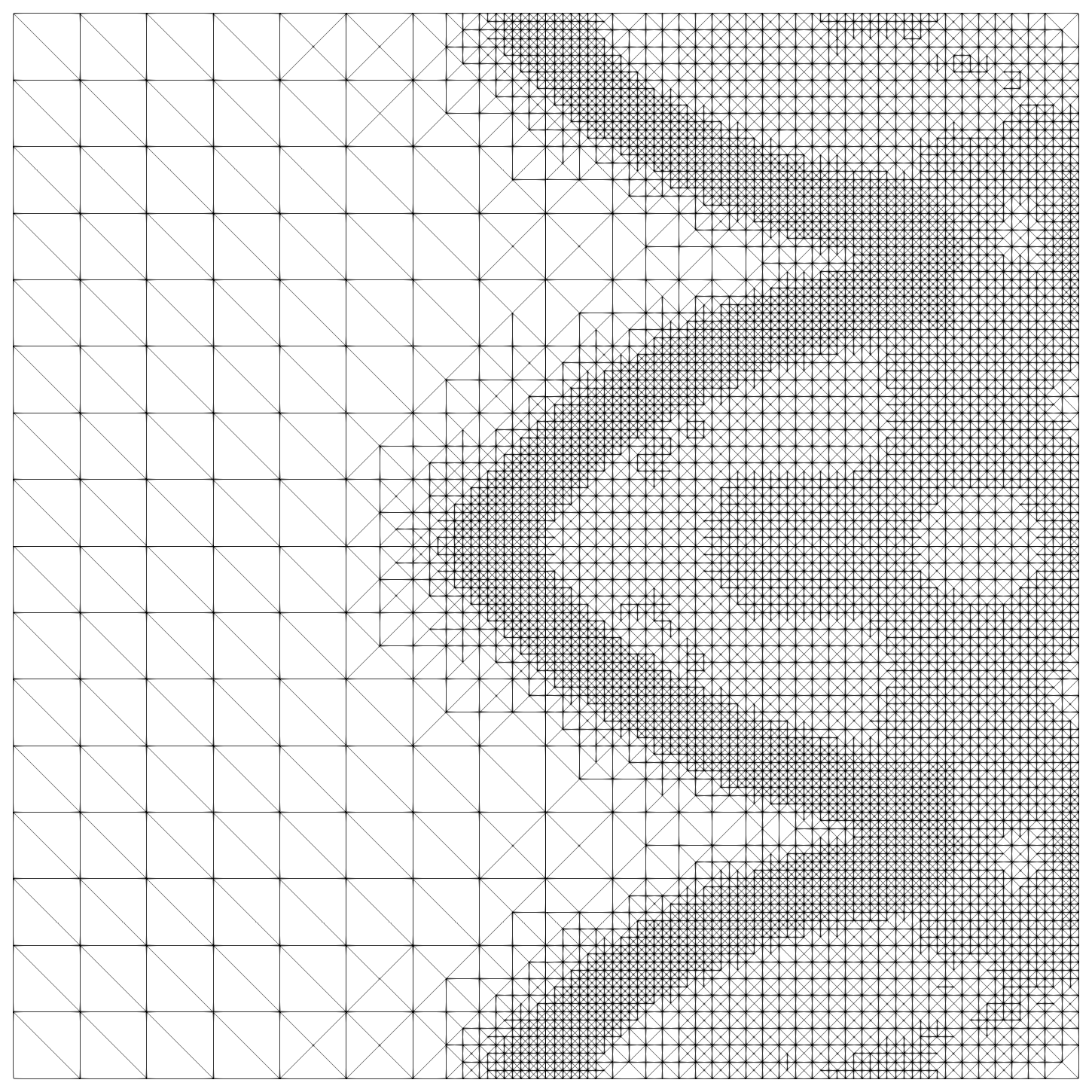}
    \end{minipage}
  & \begin{minipage}{0.15\textwidth} \centering
      \includegraphics[max width=0.9\linewidth, max height=0.9\linewidth]{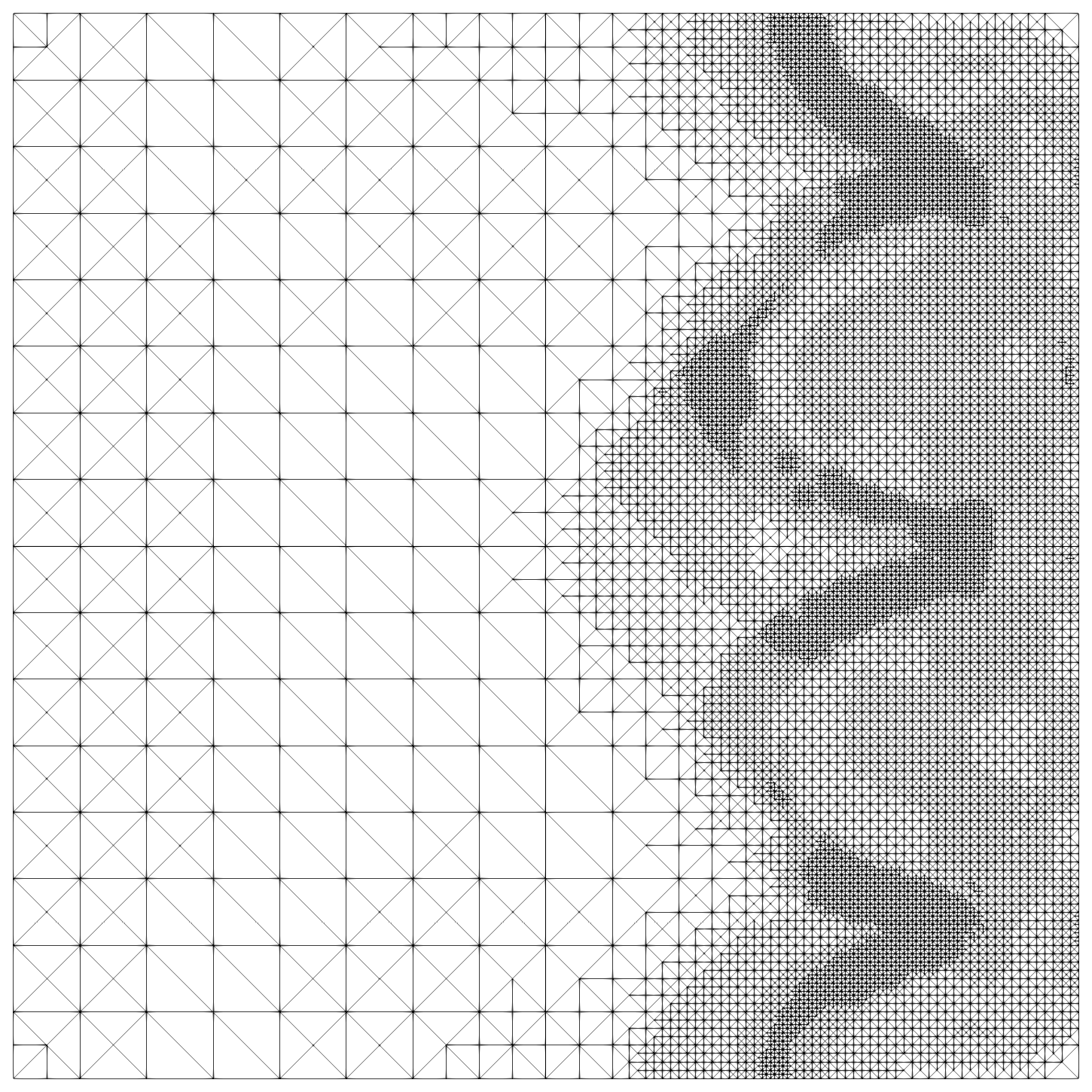}
    \end{minipage} 
  \\
\end{tabular}
}  
\caption
{Optimal material distributions for bending isometries of a plate.
The amounts of hard material are chosen by (from left to right) $\AreaHardPhase=0.25,0.375,0.5,0.625,0.75$
and the corresponding forces are given by $f = (0,0,100 \AreaHardPhase)$ (first and second row) and  $f = (0,0,10 \AreaHardPhase)$ (third and fourth row). On the bottom the adaptive finite element meshes are displayed.}
\label{fig:OptDesignsIsometryF}
\end{figure}

Note that in all computations in Figure~\ref{fig:OptDesignsIsometryF} the deformation on the boundary $ \{l \} \times (-1/2, 1/2)$ is not constraint. 
Nevertheless, as predicted in Theorem \ref{optimal} under the additional constraint that the deformation is affine on $\{l\}\times (-1/2, 1/2)$
the optimal designs are characterized by 
\begin{itemize}
\item[-] a strip of hard material for $x_1\in (0,t_0]$, where $t_0$ vanishes for small values of $V$,
\item[-] a transmission zone from fully hard to fully soft material phase for $x_1\in (t_0;t_1)$ with $t_1$ strictly larger than $t_0$ (In fact, 
in our two phase field model the achieved zick--zack profile seems to reflect 
a local minimum of the total cost functional $\costFct_h^{\penaltyPerimeter}$ including the additional approximate perimeter functional
$\penaltyPerimeter \ModicaMortola{\ModicaMortolaInterfaceWidth}$. 
In particular, different choices of the initial phase field $\pf_h$ lead to different 
zick--zack pattern),
\item[-] a  strip of soft material for $x_1\in [t_1,1)$ with $t_1 \leq 1$ depending on $V$.
\end{itemize}
Adding the constraint that the deformation is affine on $\{l\}\times (-1/2, 1/2)$,
we observe numerically almost no difference concerning the optimal shapes of the hard and soft material phase 
for the forces and area constraints as in Figure~\ref{fig:OptDesignsIsometryF}.
Only for an area constraint $\AreaHardPhase=0.25$ and a force $|F| = 10 \AreaHardPhase$ we obtain a different optimizer, which we depict in Figure~\ref{fig:optDesignStiffRightBdry}. 
To avoid too much increase of compliance density due to a bending in $-e_3$ direction in the corners $(l,-\tfrac12)$ and $(l,\tfrac12)$,
hard spikes occur when optimizing the material distribution in the absence of the constraints. 
This bending is prohibited in case of the constraint, leading to centered spike in this concrete configuration.
For both optimizer we compare in Figure~\ref{fig:optDesignStiffRightBdry} the integral over the hard material phase $\chi(\pf_h)$ 
$\int_{ x_1 \times [0,1]} \chi(\pf_h) \intd x_2$ along the $x_2$-axis as a function of $x_1$. 
In the constraint case, the resulting function $\bar B$ is indeed strictly monotone decreasing in $x_2$ as predicted in Theorem \ref{optimal}. 
However, in the non constraint case at the branch point of the two spikes in $x_2$ direction the strict monotonicity appears to be violated. 
\begin{figure}
\begin{tikzpicture}
   \node at (-4, 3){no additional constraint};
   \node at (0, 3){\includegraphics[max width=0.2\linewidth, max height=0.2\linewidth]{results/Isometry/F10/Area0.25/SolMaterial_Deformed.png}};
   \node at (-4, 0){{\color{BrickRed} affine deformation}};  \node at (-4, -0.5){ {\color{BrickRed} on $\{l\}\times (-1/2, 1/2)$}};
   \node at (0, 0){\includegraphics[max width=0.2\linewidth, max height=0.2\linewidth]{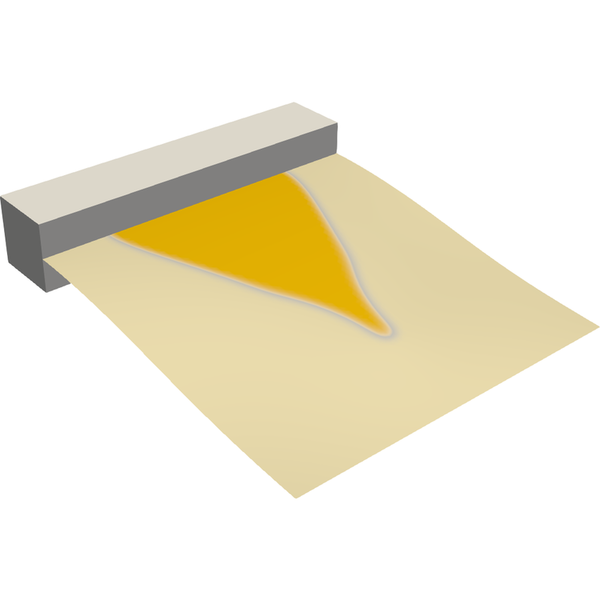}};
   \node at (6, 1.5){\includegraphics[max width=0.4\linewidth, max height=0.4\linewidth]{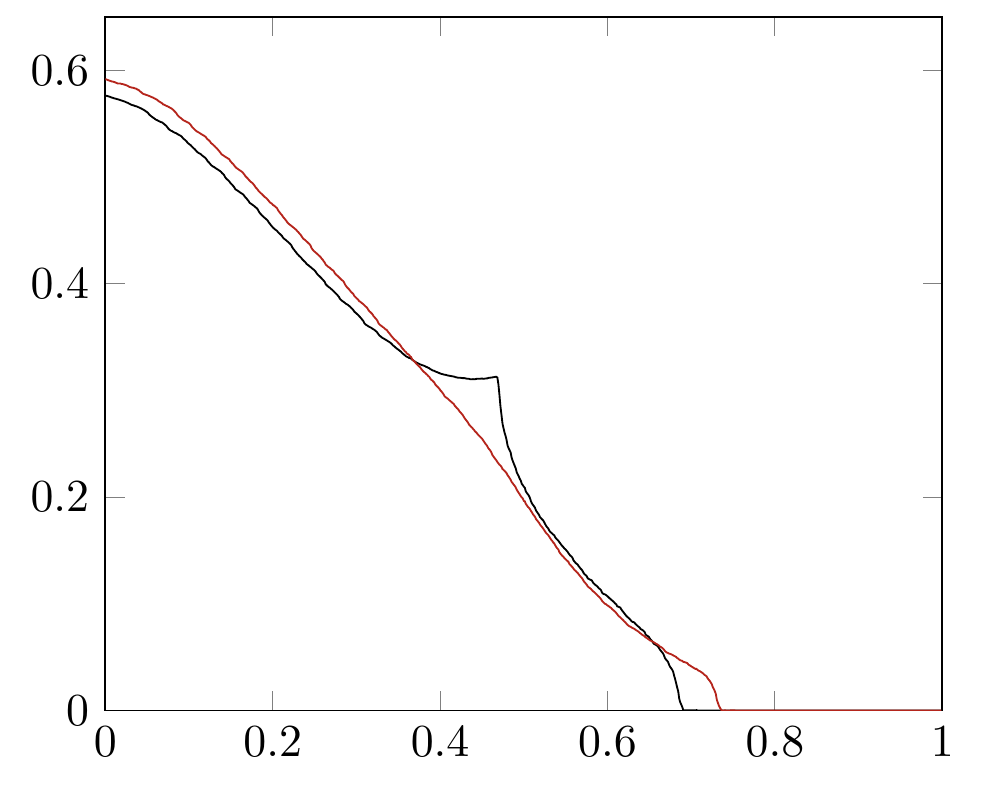}};
\end{tikzpicture}
 \caption{Optimal material distribution with and without stiff right boundary. 
 We compare the integral of the hard material phase along the $x_2$-axis (red curve in the constraint case).}
 \label{fig:optDesignStiffRightBdry}
\end{figure}

\section{Optimal design in case of membrane and bending energy}\label{sec:Mixed}
So far, we have investigated the case of pure bending isometries.
In this section, we relax the isometry constraint by using a membrane energy in addition to the bending energy $\mathcal{E}$.
Furthermore, we consider numerically elastic shells with curved undeformed configuration.
In explicit, the reference configuration $S_{ \text{ref}}$ is assumed to be a two-dimensional, compact, orientable manifold with Lipschitz boundary,
which is parametrized by a fixed, single chart $\psi_{ \text{ref}} \colon \omega \to S_{ \text{ref}}$, where $\omega \subset \R^2$.
We consider deformed configurations $S_{ \text{def}} = u(S)$ which can be parameterized over $\omega$ by $\psi_{ \text{def}} := u \circ \psi_{ \text{ref}}$.
This allows to formulate the energies in terms of $\psi_{ \text{def}}$.
We denote by $g_{ \text{ref}} = D\psi_{ \text{ref}}^T D \psi_{\text{ref}}$, $g_{ \text{def}} = D\psi_{ \text{def}}^T D \psi_{ \text{def}} \in \R^{2 \times 2}$ the first fundamental forms of $S_{ \text{ref}}$ and $S_{ \text{def}}$ 
on the chart domain as functions of $\psi_{\text{ref}}$.
As in Section~\ref{sec:numericOptDesign}, we take into account a phase field function $v \in H^1(\omega,[-1,1])$ to the describe the material hardness $B: \omega \to [a,b]$.
Assuming a Poisson ratio $\nu = 0.25$, the Lam\'{e}--Navier parameters can be expressed by $\mu(v) = \lambda(v) = \frac{2}{5} B(v)$.
Now, we define a membrane energy 
\begin{align*}
 \mathcal{E}^v_{\text{mem}}[\pf, \psi_{ \text{def}}]  = \int_{\omega} \sqrt{\det g_{ \text{ref}}} \; W_{\text{mem}} \left( \pf, g_{ \text{ref}}^{-1} g_{ \text{def}} \right) \intd \xi ,
\end{align*}
where the density function is given by
\begin{align*}
 W_{\text{mem}}(\pf, F)  
 =   \frac{\mu(\pf)}{2} \Tr(F) 
   + \frac{\lambda(\pf)}{4} \det( F )
   - \left( \frac{\mu(\pf)}{2} + \frac{\lambda(\pf)}{4} \right) \log( \det(F) ) - \mu(\pf) - \frac{\lambda(\pf)}{4} 
\end{align*}
for $F \in \R^{2 \times 2}_{\text{sym}}$ (cf. \cite{HeRuWa12}).
For the bending energy, we simply use the squared Frobenius-norm of the relative shape operator $g_{ \text{ref}}^{-1} (A_{ \text{def}} - A_{ \text{ref}})$
and choose
\begin{align*}
  \mathcal{E}^v_{\text{bend}}[\pf, \psi_{ \text{def}}]  = \int_{\omega} \sqrt{\det g_{ \text{ref}}} \; B(\pf) \| g_{ \text{ref}}^{-1} (A_{ \text{def}} - A_{ \text{ref}}) \| \intd \xi  \, .
\end{align*}
Then, the stored elastic energy is defined via properly scaling both energy components with respect to the shell 
thickness parameter $\delta$ as follows:
\begin{align*}
\mathcal{E}^v[\pf,\psi_{ \text{def}}] := \delta\; \mathcal{E}^v_{\text{mem}}[\pf,\psi_{ \text{def}}] + \delta^3\; 
\mathcal{E}^v_{\text{bend}}[\pf,\psi_{ \text{def}}].
\end{align*}
In what follows we consider different undeformed configurations and loads.
Similar to bending isometries, we numerically compute solutions of the state equation via a Newton method.
Then, for the material optimization, we apply the \verb!IPOPT! solver to compute minimizer of a fully discrete cost functional.
Here, we also use an adaptive meshing strategy with $7$ refinement steps via longest edge refinement of those elements $T$ with $\fint_T |\nabla \pf_h|^2 \intd x > \frac{1}{2}$.

\bigskip

\emph{Centered Load on a Plate.}
First, we investigate the flat case $S_{ \text{ref}} = [0,1]^2$.
We consider a force $f = \left( 0, 0, c \, \chi_{[0.45,0.55]^2} \right)$ in normal direction and is supported on a square in the center of $S$.
The deformation is supposed to be clamped at the boundary $\partial S$.
As penalty parameter for the Modica--Mortola functional, we choose $\penaltyPerimeter = 10^{-3}$.
Moreover, we choose different area constraints $\AreaHardPhase = \frac{k}{8}$ for $k=2,3,4,5,6$.
Then, depending on this area constraint, we set $c = - 250 \AreaHardPhase$ for the force 
to ensure that the corresponding deformations are comparable.
Furthermore, we consider $\delta = 10^{-2}$.
In Figure~\ref{fig:shapeOptKoiterPlateCenteredLoad}, 
we depict the cross type structure for the hard phase which characterizes the minimizer of  the compliance functional.
\begin{figure}[!htbp]
\resizebox{\textwidth}{!}{
\begin{tabular}{ m{2cm} c c c c c }
  deformed config.
  & \begin{minipage}{0.15\textwidth} \centering
      \includegraphics[max width=1.0\linewidth]{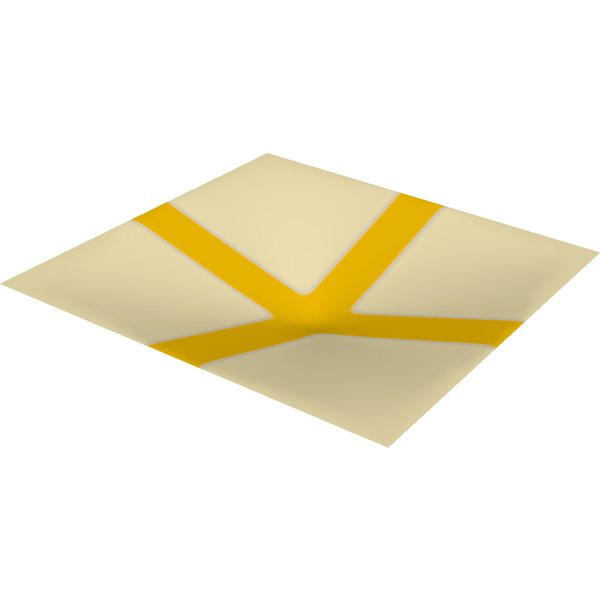}
  \end{minipage}
  & \begin{minipage}{0.15\textwidth} \centering
      \includegraphics[max width=1.0\linewidth]{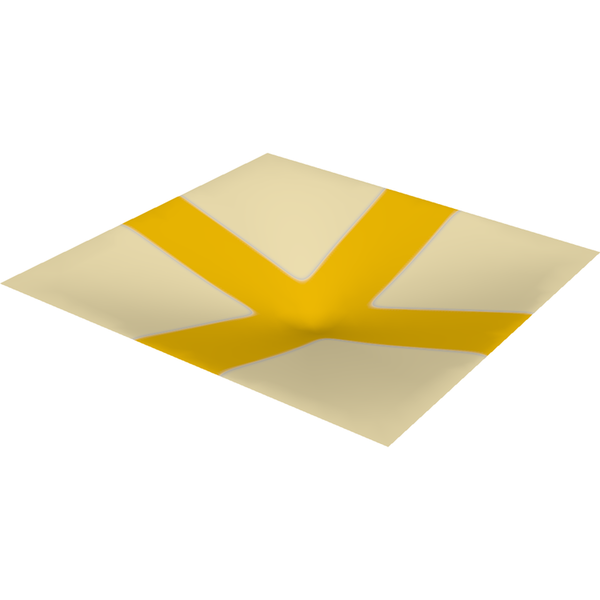}
  \end{minipage}
  & \begin{minipage}{0.15\textwidth} \centering
      \includegraphics[max width=1.0\linewidth]{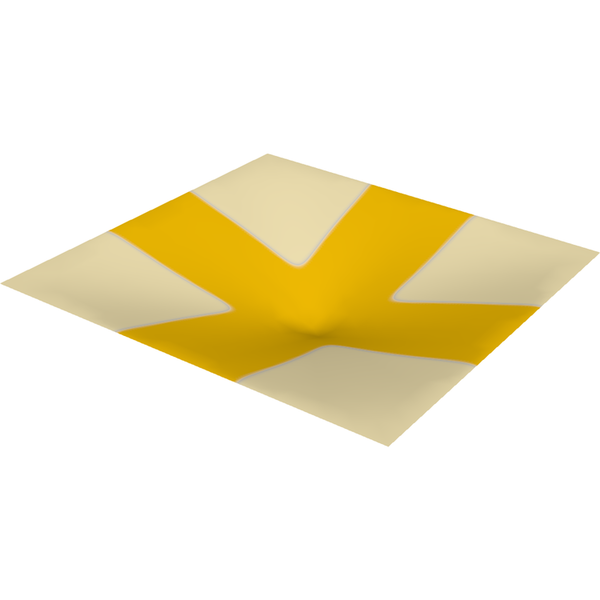}
  \end{minipage}
  & \begin{minipage}{0.15\textwidth} \centering
      \includegraphics[max width=1.0\linewidth]{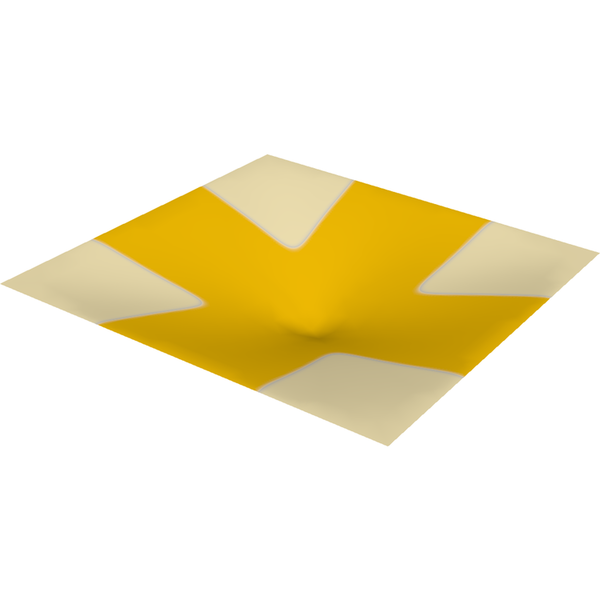}
  \end{minipage}
  & \begin{minipage}{0.15\textwidth} \centering
      \includegraphics[max width=1.0\linewidth]{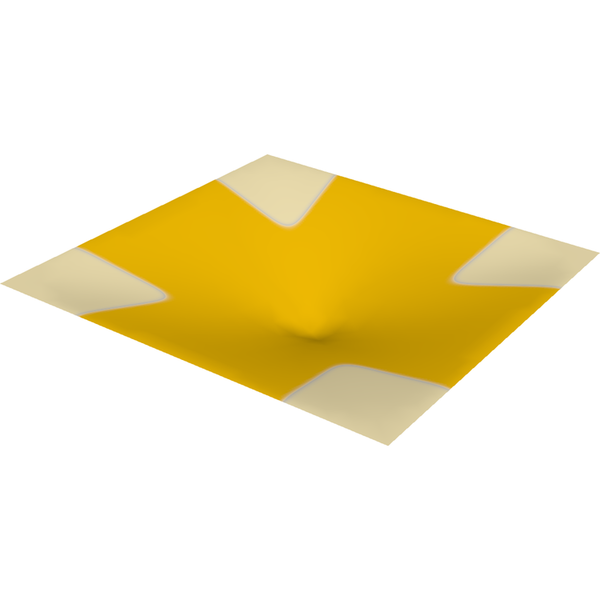}
  \end{minipage}
  \\
   undeformed config.
  & \begin{minipage}{0.15\textwidth} \centering 
     \includegraphics[max width=0.9\linewidth, max height=0.9\linewidth]{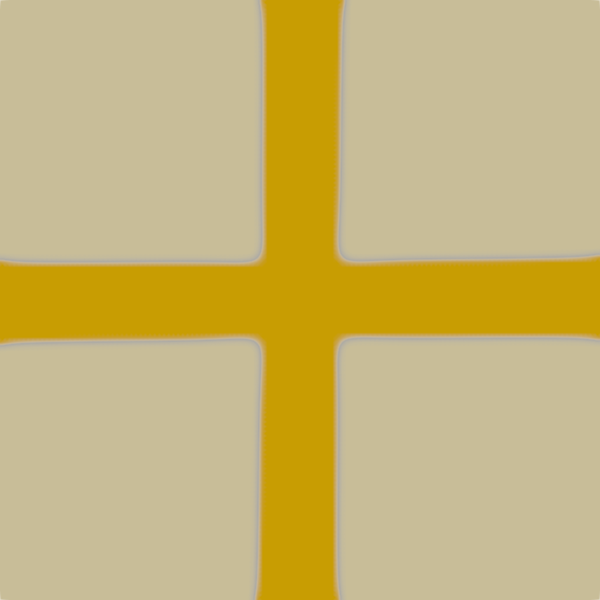}
    \end{minipage}
  & \begin{minipage}{0.15\textwidth} \centering
      \includegraphics[max width=0.9\linewidth, max height=0.9\linewidth]{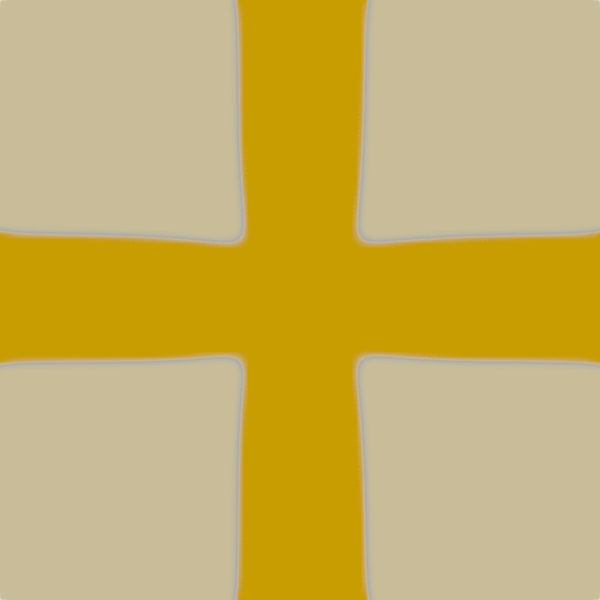}
     \end{minipage}
  & \begin{minipage}{0.15\textwidth} \centering
      \includegraphics[max width=0.9\linewidth, max height=0.9\linewidth]{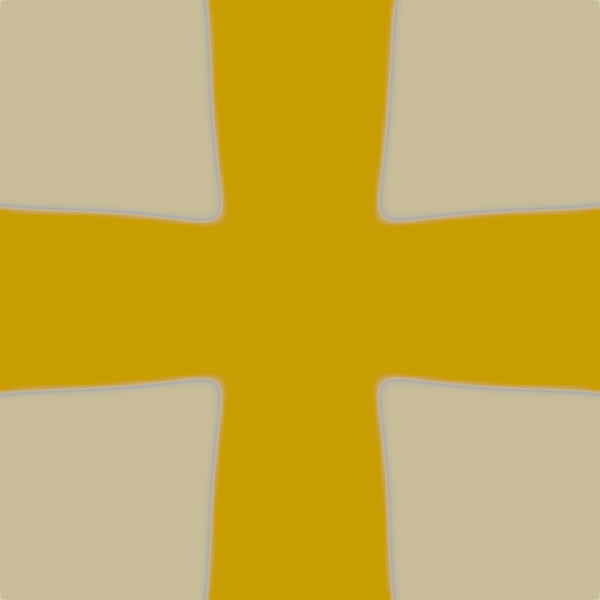}
     \end{minipage}
  & \begin{minipage}{0.15\textwidth} \centering
      \includegraphics[max width=0.9\linewidth, max height=0.9\linewidth]{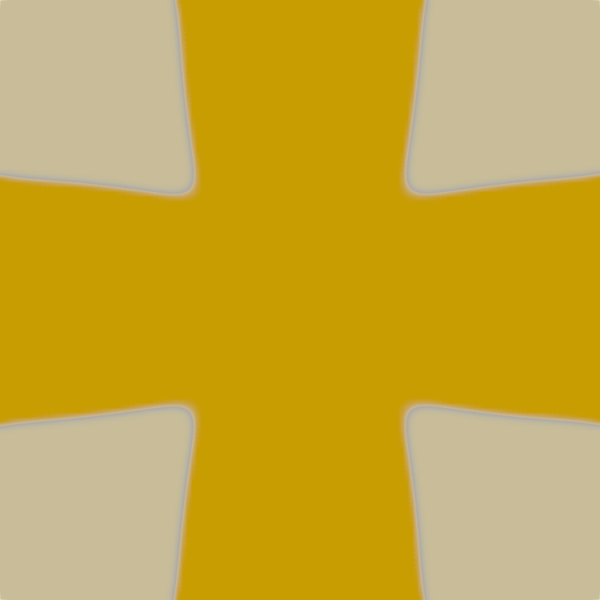}
    \end{minipage}
  & \begin{minipage}{0.15\textwidth} \centering
      \includegraphics[max width=0.9\linewidth, max height=0.9\linewidth]{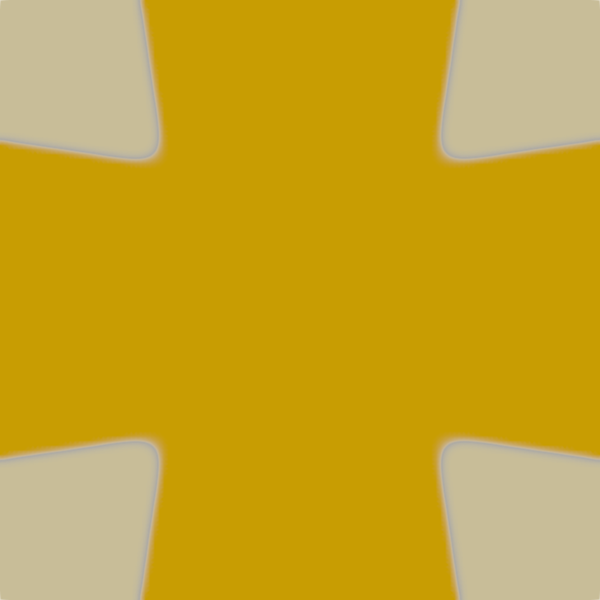}
    \end{minipage}
 \\
\end{tabular}
}
\caption
[Optimal material distributions on a plate for a centered load.]
{\label{fig:shapeOptKoiterPlateCenteredLoad} 
Optimal material distributions on a plate $S = [0,1]^2$ for a centered load in normal direction supported on $(0.45,0.55)^2$.
We compare the resulting hard phase for different area constraints $\AreaHardPhase=0.25,0.375,0.5,0.625,0.75$.

}
\end{figure}

\bigskip

\emph{Constant Load on a Plate.}
Next, in Figure~\ref{fig:shapeOptKoiterPlateConstLoad}, still for the flat case $S_{ \text{ref}} = [0,1]^2$,
we consider a force $f = \left( 0, 0, c \right)$
acting everywhere on the plate in normal direction for some constant $c$.
Again, we assume clamped boundary conditions of the displacement on $\partial S$.
As above, we choose $\penaltyPerimeter = 10^{-3}$ for the Modica--Mortola functional and $\delta = 10^{-2}$ for the thickness.
Furthermore, we compare different area constraints $\AreaHardPhase = \frac{k}{8}$ for $k=2,3,4,5,6$
and set $c = - 20 \AreaHardPhase$ for the force.
While for the centered load it has been sufficient to stabilize the area in the region, where the force is concentrated, by trusses connected to the boundary,
for a constant load there is a need of microstructures to keep the deformation as small as possible in terms of the potential energy.
\begin{figure}[!htbp]
\resizebox{\textwidth}{!}{
\begin{tabular}{  m{2cm} c c c c c }
  deformed config.
  & \begin{minipage}{0.15\textwidth} \centering
      \includegraphics[max width=\linewidth]{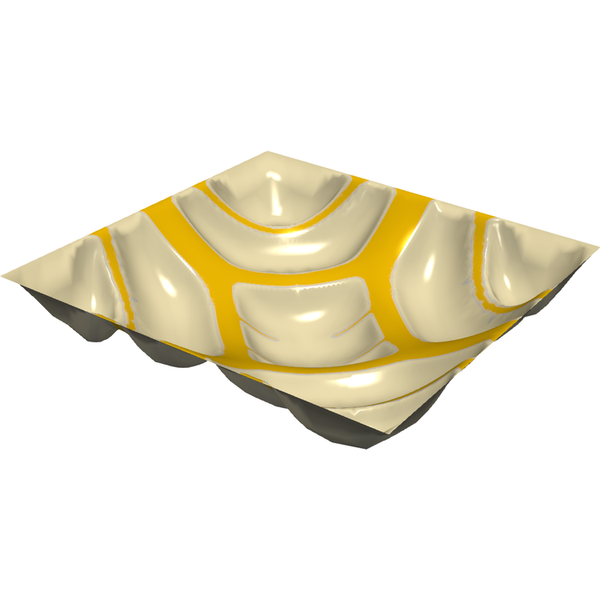}
  \end{minipage}
  & \begin{minipage}{0.15\textwidth} \centering
      \includegraphics[max width=\linewidth]{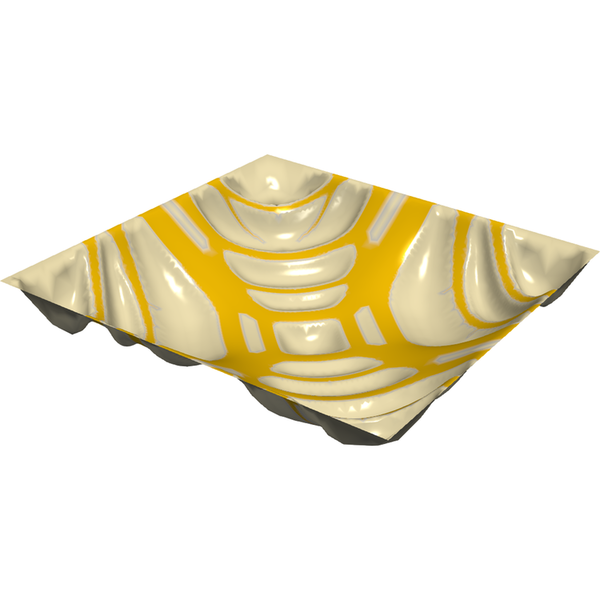}
  \end{minipage}
  & \begin{minipage}{0.15\textwidth} \centering
      \includegraphics[max width=\linewidth]{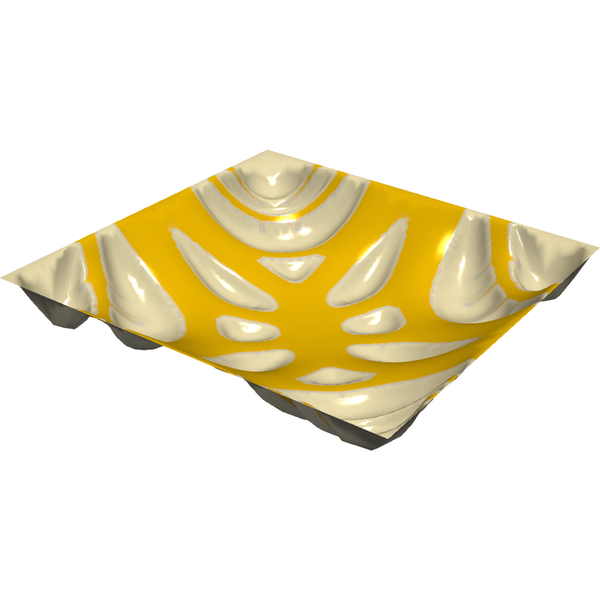}
  \end{minipage}
  & \begin{minipage}{0.15\textwidth} \centering
      \includegraphics[max width=\linewidth]{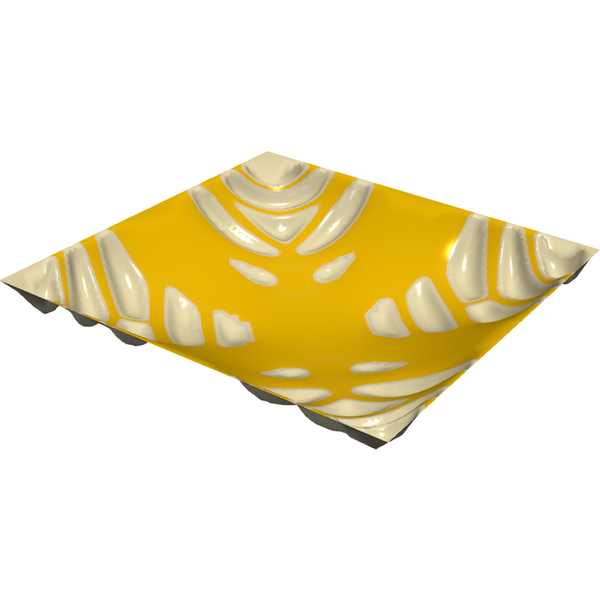}
  \end{minipage}
  & \begin{minipage}{0.15\textwidth} \centering
      \includegraphics[max width=\linewidth]{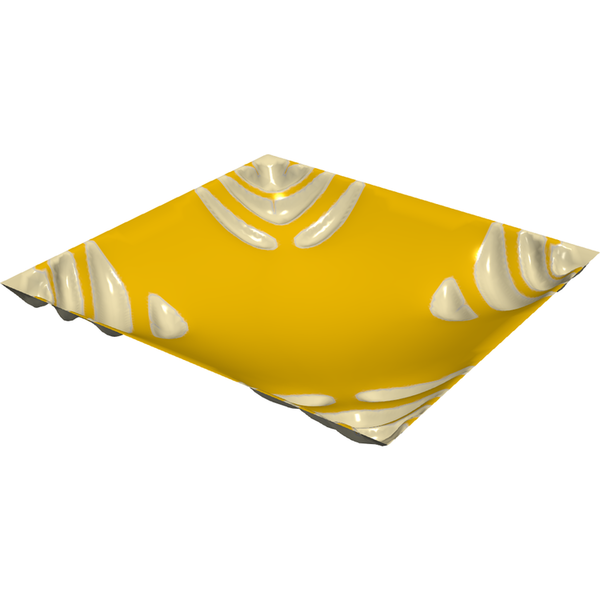}
  \end{minipage} 
  \\ 
  undeformed config.
  & \begin{minipage}{0.15\textwidth} \centering 
     \includegraphics[max width=0.9\linewidth, max height=0.9\linewidth]{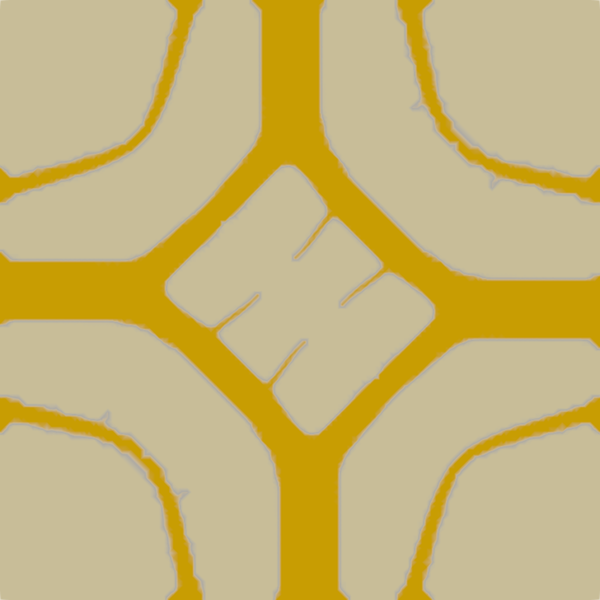}
    \end{minipage}
  & \begin{minipage}{0.15\textwidth} \centering
      \includegraphics[max width=0.9\linewidth, max height=0.9\linewidth]{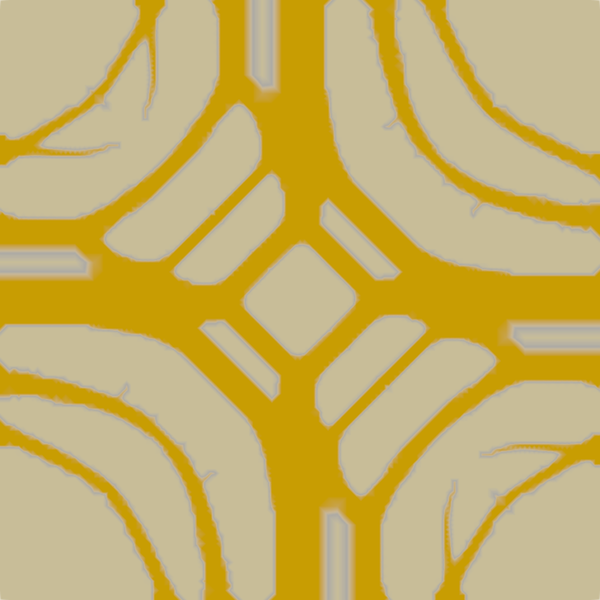}
     \end{minipage}
  & \begin{minipage}{0.15\textwidth} \centering
      \includegraphics[max width=0.9\linewidth, max height=0.9\linewidth]{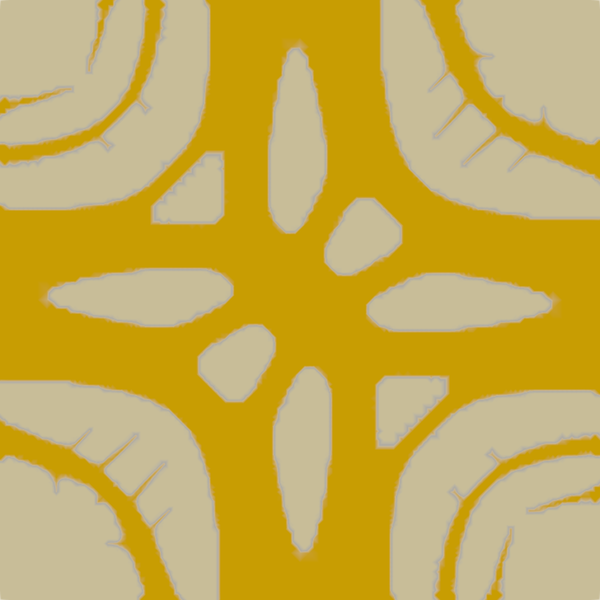}
     \end{minipage}
  & \begin{minipage}{0.15\textwidth} \centering
      \includegraphics[max width=0.9\linewidth, max height=0.9\linewidth]{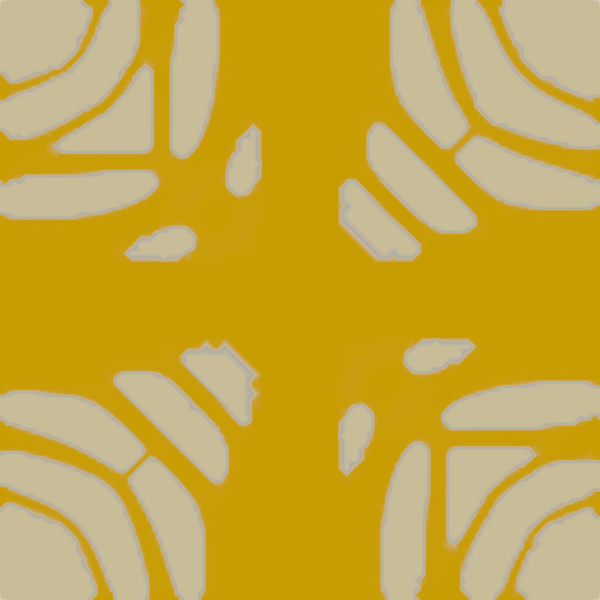}
    \end{minipage}
  & \begin{minipage}{0.15\textwidth} \centering
      \includegraphics[max width=0.9\linewidth, max height=0.9\linewidth]{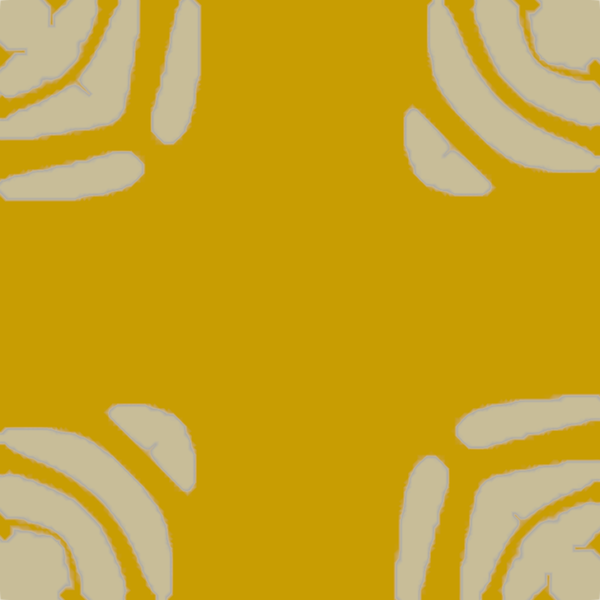}
    \end{minipage}
  \\
 \end{tabular}
 }
\caption
[Optimal material distributions on a plate for a constant load.]
{Optimal material distributions on a plate $S = [0,1]^2$ for a constant load acting in normal direction
and clamped boundary conditions on $\partial S$.
We compare the results for different area constraints $\AreaHardPhase=0.25,0.375,0.5,0.625,0.75$.
}
\label{fig:shapeOptKoiterPlateConstLoad}
\end{figure}

\bigskip

\emph{Constant Load on a Hemisphere.}
Now, we investigate optimal material distributions on the upper hemisphere
with the inverse of the stereographic projection as parametrization $\psi_{ \text{ref}}(\xi) 
        = \left( \frac{2 \xi_1}{\xi_1^2 + \xi_2^2 + 1}, \;
                 \frac{2 \xi_2}{\xi_1^2 + \xi_2^2 + 1}, \;
                 \frac{1 - \xi_1^2 - \xi_2^2}{\xi_1^2 + \xi_2^2 + 1}
\right) $ over the unit disc.
We assume clamped boundary conditions on the left and right side, i.e., we set 
$\Gamma_D =  \{ p \in S_{ \text{ref}}  \; : \;  p_3 = 0 \, , \; |p_1| \geq 0.9\}$.
Moreover, we consider a single area constraint $\AreaHardPhase = \tfrac12 \mathcal{H}^2(S_{ \text{ref}})$.
A force $f = \left( 0, 0, c \right)$ with $c = 0.001$ is acting on the reference domain and the thickness is $\delta = 10^{-2}$.
Then, we apply $6$ adaptive refinement steps.

In Figure~\ref{fig:shapeOptKoiterHalfSphereLR}, 
we compare different values for the parameter $\penaltyPerimeter$ to penalize the Modica--Mortola functional.
Indeed, for $\penaltyPerimeter \to 0$ we observe successively finer pattern for the hard phase again underpinning the emergence of 
a microstructures as the minimizer in the limit.
\begin{figure}[!htbp]
\resizebox{\textwidth}{!}{
\begin{tabular}{ m{2cm} c c c }
  undeformed config.
  & \begin{minipage}{0.25\textwidth} \centering
       \includegraphics[max width=\linewidth]{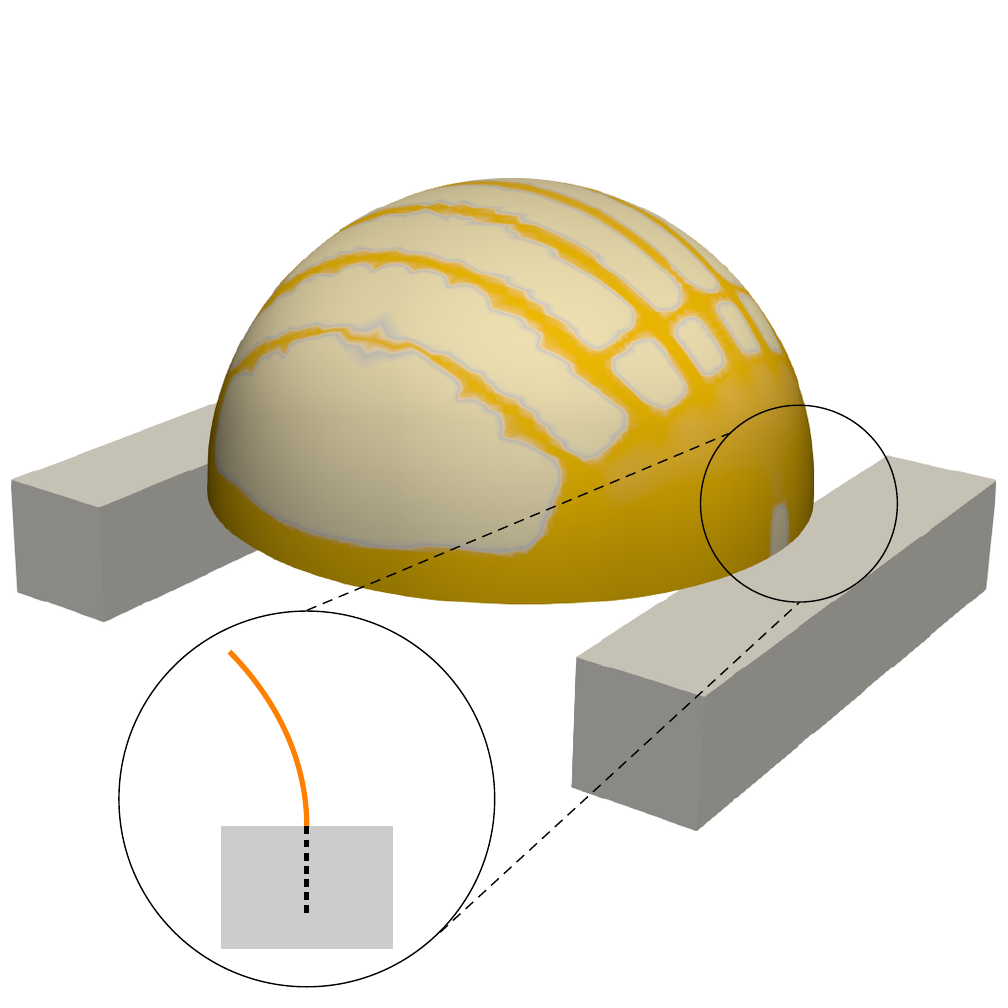}
  \end{minipage}
  & \begin{minipage}{0.25\textwidth} \centering
      \includegraphics[max width=\linewidth]{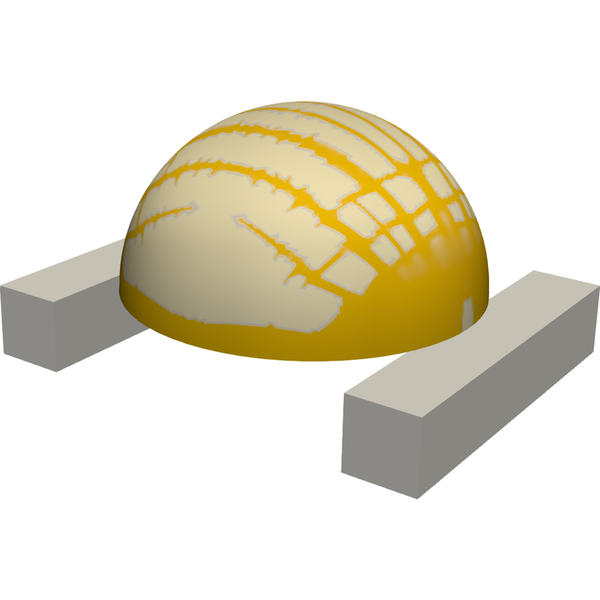}
  \end{minipage}
  & 
  \begin{minipage}{0.25\textwidth} \centering
      \includegraphics[max width=\linewidth]{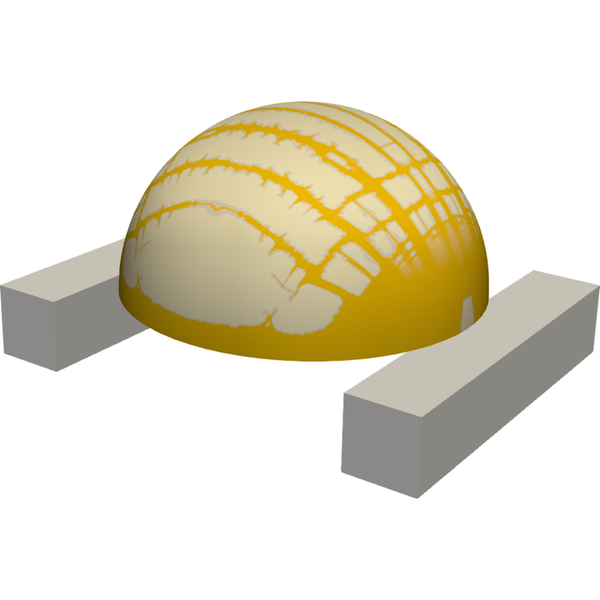}
  \end{minipage}
  \\
 chart domain
  & \begin{minipage}{0.25\textwidth} \centering 
     \includegraphics[max width=0.9\linewidth, max height=0.9\linewidth]{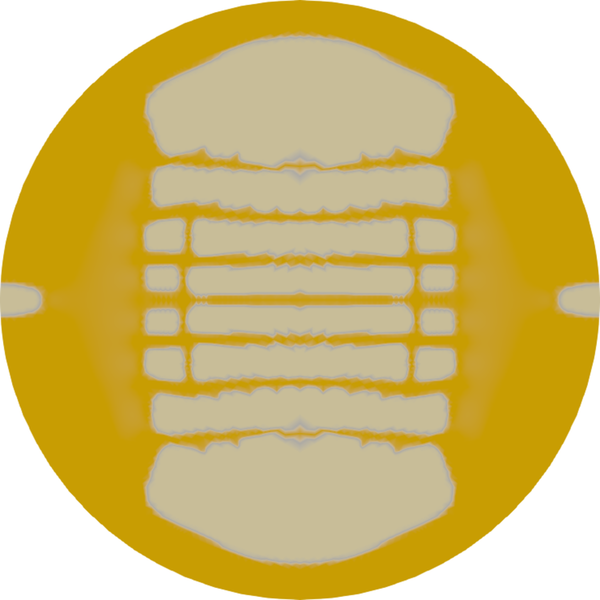}
    \end{minipage}
  & \begin{minipage}{0.25\textwidth} \centering
      \includegraphics[max width=0.9\linewidth, max height=0.9\linewidth]{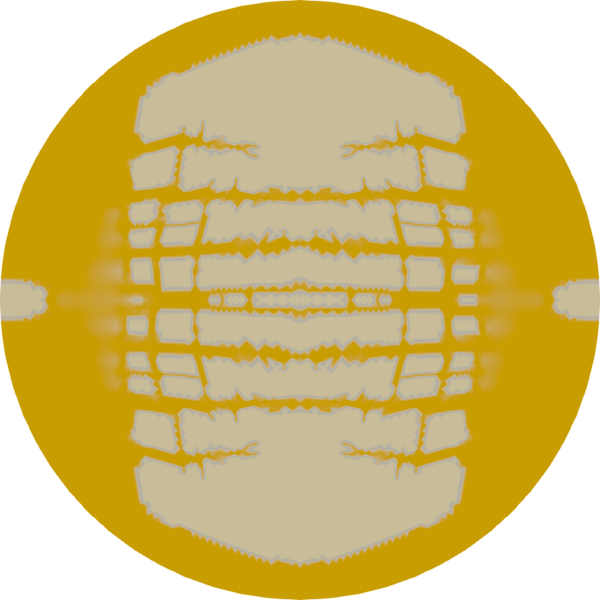}
     \end{minipage}
  & 
  \begin{minipage}{0.25\textwidth} \centering
      \includegraphics[max width=0.9\linewidth, max height=0.9\linewidth]{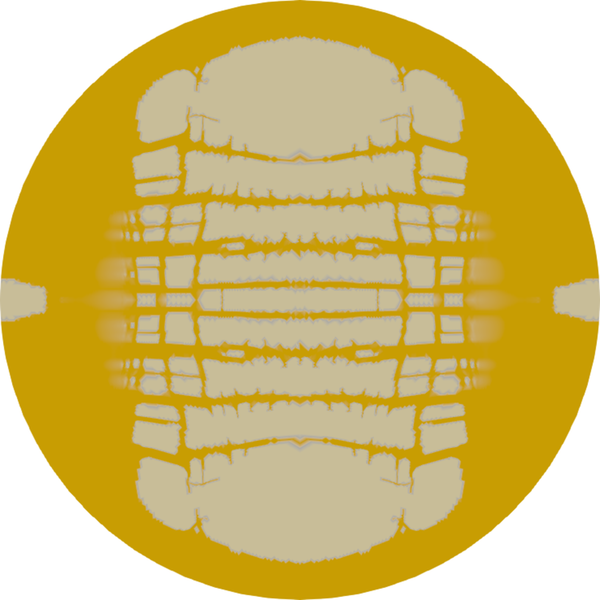}
    \end{minipage}
  \\ 
\end{tabular}
}
\caption
[Optimal material distributions on a hemisphere.]
{For a material distributions on a hemisphere
the optimal hard phase is shown for $\penaltyPerimeter=10^{-7},10^{-8},10^{-9}$ (left to right)
in the deformed configuration (top) and on the chart domain (bottom). 
In addition, the clamped boundary condition is sketched.
}
\label{fig:shapeOptKoiterHalfSphereLR}
\end{figure}

\bigskip

\emph{Constant Load on a Half Cylinder.}
Finally, we consider the half cylinder $S_{ \text{ref}}$
which is parametrized over the chart domain $\omega = [0,1]^2$ via
$
 \psi_{ \text{ref}}(\xi)
 = \left( \frac{1}{2\pi} (1 - \cos( \pi \xi_1) ), \xi_2, \frac{1}{2\pi} \sin( \pi \xi_1) \right)
$
We assume clamped boundary conditions on the left and right sides w.r.t. the $e_2$-direction, i.e., we set 
$\Gamma_D = \{ p =  \in S_{ \text{ref}} \; : \;  p_2 \in \{ 0, 1 \} \}$.
Here, we study the effect for different thickness parameters $\delta$.  
We consider a single area constraint $\AreaHardPhase = \tfrac12 \mathcal{H}^2(S_{ \text{ref}})$ and a constant 
force $f = \left( 0, 0, c \right)$ with $c = -10$.
In Figure~\ref{fig:shapeOptKoiterHalfCylinderTB}, we depict the computational results.
\begin{figure}[!htbp]
\resizebox{\textwidth}{!}{
 \begin{tabular}{  m{2.5cm} c c c c }
  {\footnotesize deformed config. for homogeneous material}
 & \begin{minipage}{0.15\textwidth} \centering
      \includegraphics[max width=\linewidth]{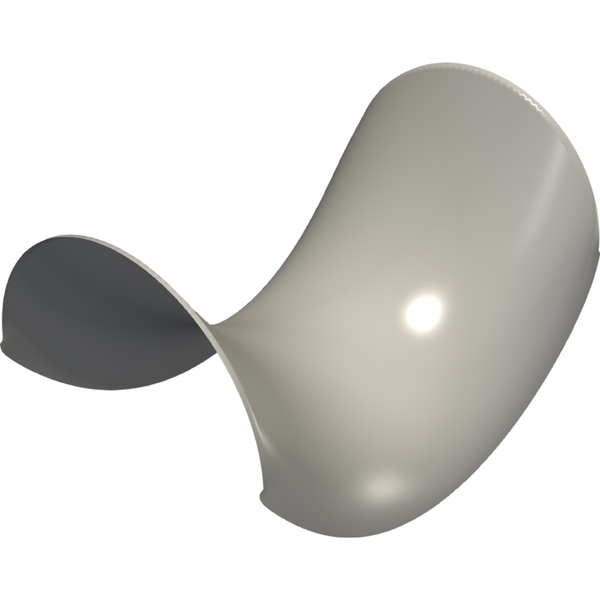}
  \end{minipage}
 & \begin{minipage}{0.15\textwidth} \centering
      \includegraphics[max width=\linewidth]{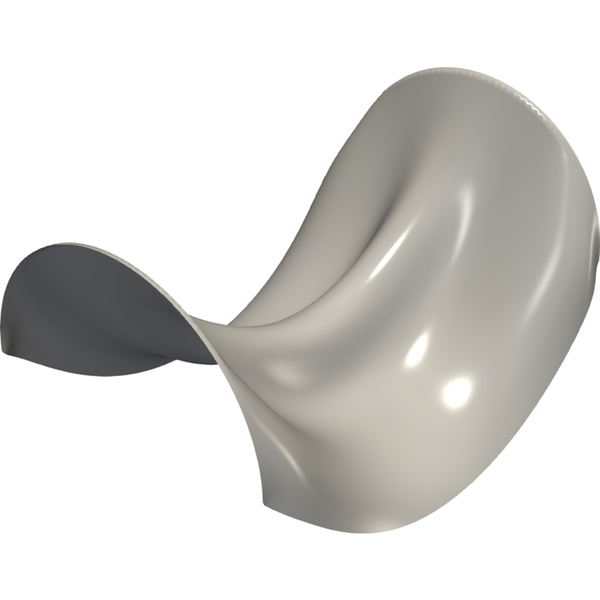}
  \end{minipage}
 & \begin{minipage}{0.15\textwidth} \centering
      \includegraphics[max width=\linewidth]{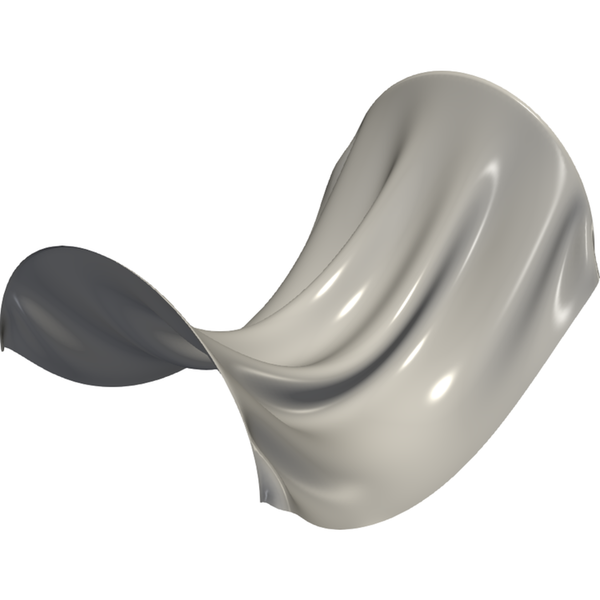}
  \end{minipage}
 & \begin{minipage}{0.15\textwidth} \centering
      \includegraphics[max width=\linewidth]{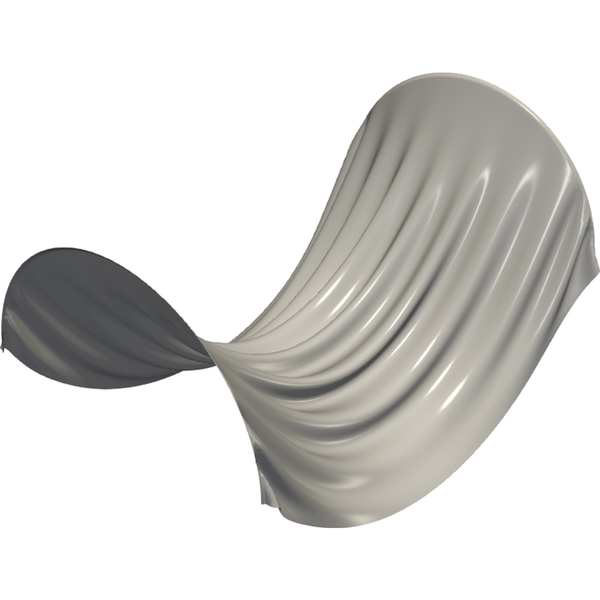}
  \end{minipage}
  \\ 
  {\footnotesize deformed config. for optimal material}
 & \begin{minipage}{0.15\textwidth} \centering
      \includegraphics[max width=\linewidth]{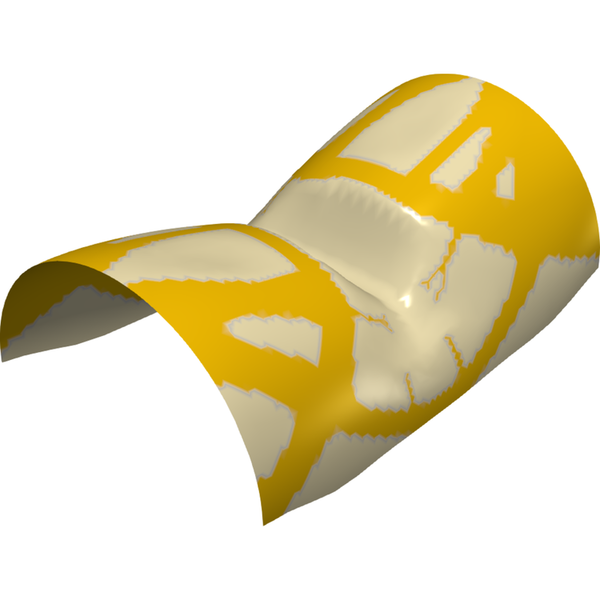}
  \end{minipage}
 & \begin{minipage}{0.15\textwidth} \centering
      \includegraphics[max width=\linewidth]{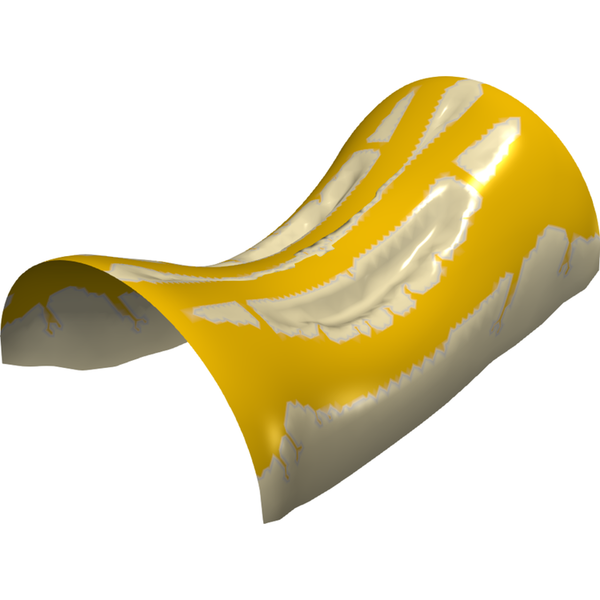}
  \end{minipage}
 & \begin{minipage}{0.15\textwidth} \centering
      \includegraphics[max width=\linewidth]{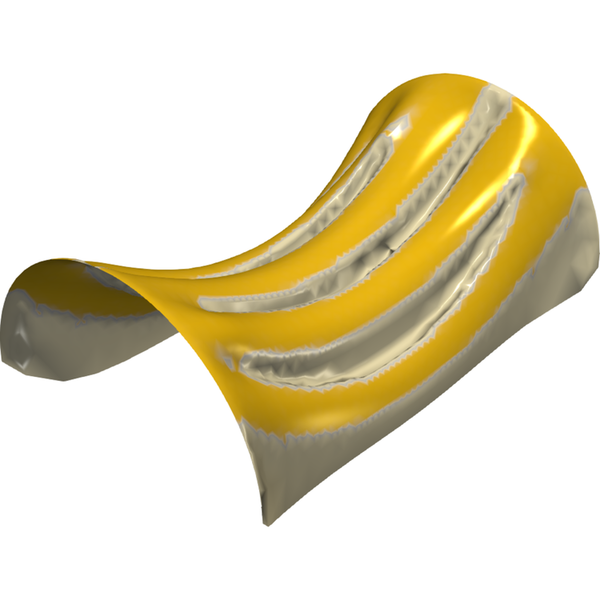}
  \end{minipage}
 & \begin{minipage}{0.15\textwidth} \centering
      \includegraphics[max width=\linewidth]{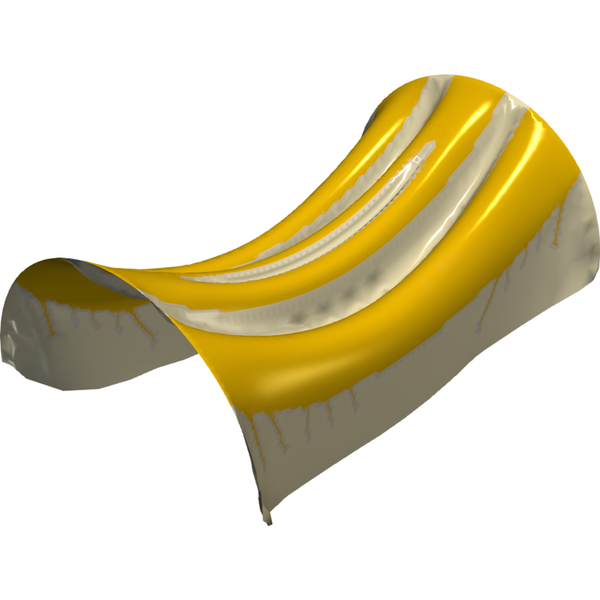}
  \end{minipage}
  \\
 {\footnotesize chart domain}
 & \begin{minipage}{0.15\textwidth} \centering
      \includegraphics[max width=\linewidth]{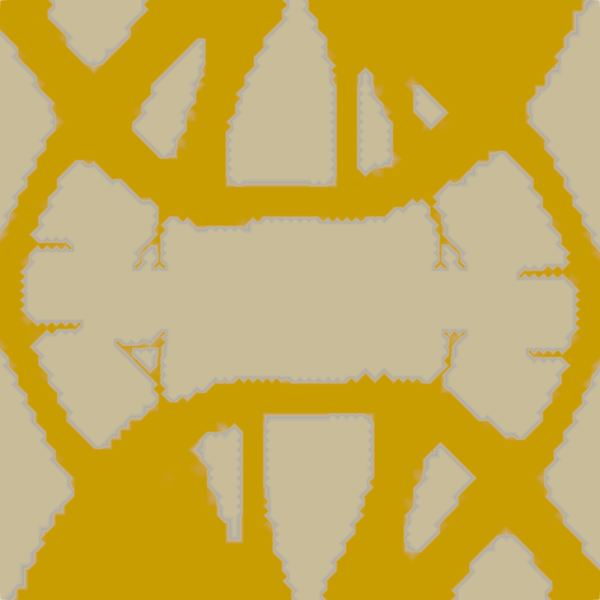}
  \end{minipage}
 & \begin{minipage}{0.15\textwidth} \centering
      \includegraphics[max width=\linewidth]{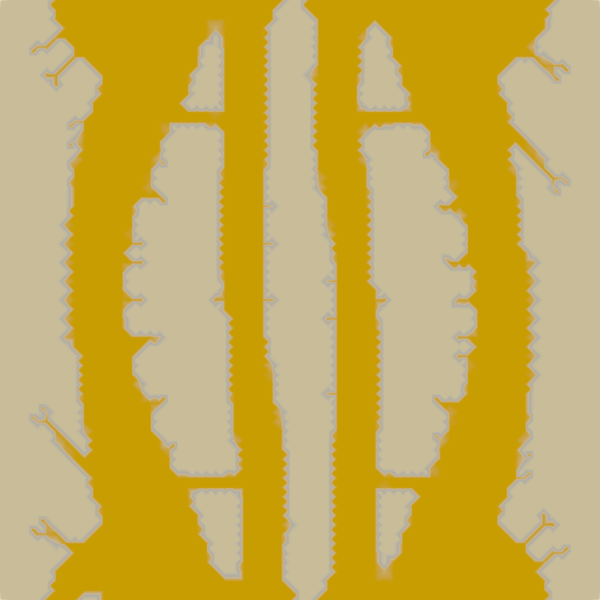}
  \end{minipage}
 & \begin{minipage}{0.15\textwidth} \centering
      \includegraphics[max width=\linewidth]{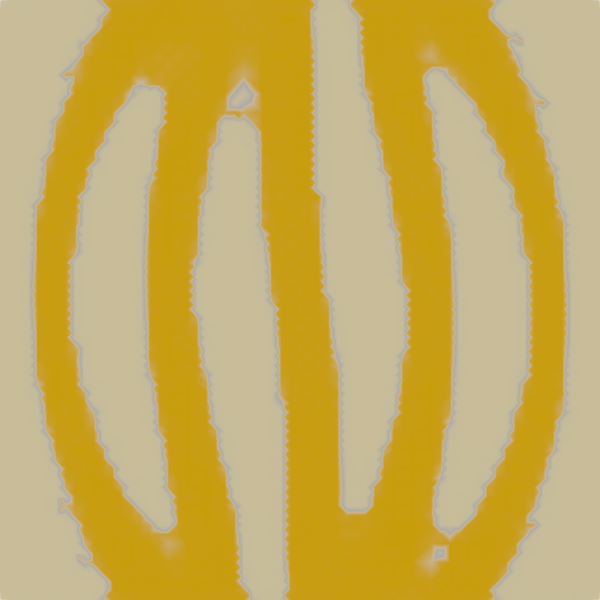}
  \end{minipage}
 & \begin{minipage}{0.15\textwidth} \centering
      \includegraphics[max width=\linewidth]{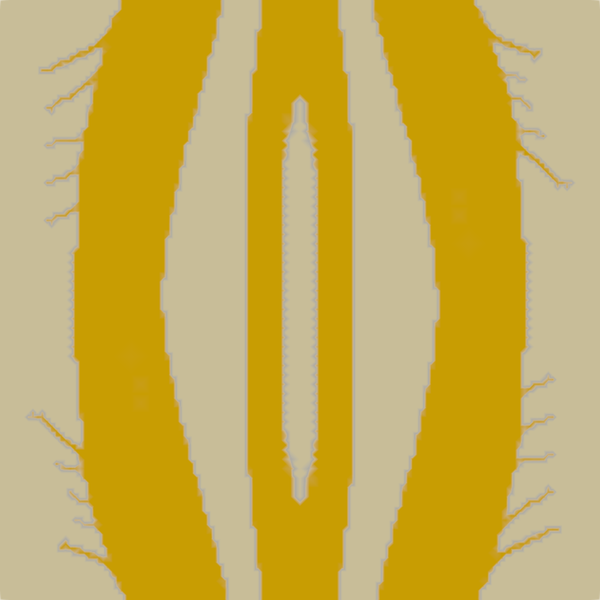}
  \end{minipage}
  \\ 
\end{tabular}
}
\caption
[Optimal material distributions on a half cylinder.]
{First, we show solutions of the state equation for a homogeneous material distribution (top)
for clamped left and right side and thickness parameters $\delta=10^{-1},10^{-1.5},10^{-2},10^{-2.5}$ (from left to right).
The optimal material distributions on a half cylinder is shown in the deformed configuration (middle) and on the chart (bottom).
}
\label{fig:shapeOptKoiterHalfCylinderTB}
\end{figure}

\paragraph*{Acknowledgements.}
Peter Hornung acknowledges support of the Deutsche Forschungsgemeinschaft (DFG, German Research Foundation). 
Martin Rumpf and Stefan Simon acknowledge support of the Collaborative Research Center 1060, 
funded by the DFG - Projektnummer 211504053 - SFB 1060 
and the Hausdorff Center for Mathematics, funded by the DFG under Germany \' s Excellence Strategy - GZ 2047/1, Project-ID 390685813.

\def\polhk#1{\setbox0=\hbox{#1}{\ooalign{\hidewidth
  \lower1.5ex\hbox{`}\hidewidth\crcr\unhbox0}}}

\end{document}